\documentclass[10pt]{article}
\usepackage{amsfonts}
\usepackage{amsmath,amsthm,amscd,amssymb,mathrsfs,setspace}
\usepackage{latexsym,epsf,epsfig}
\usepackage[makeroom]{cancel}
\usepackage{color}
\usepackage[hmargin=3cm,vmargin=3.5cm]{geometry}
\usepackage{graphicx}
\usepackage{hyperref}

\setcounter{MaxMatrixCols}{10}

\newcommand{\ds}{\displaystyle}

\newcommand{\xb}{{\bf{x}}}

\newcommand{\cA}{{\mathcal{A}}}

\newcommand{\cH}{\mathcal{H}}

\newcommand{\cD}{\mathscr{D}}
\newcommand{\Dn}{\partial_{\nu}}

\theoremstyle{plain}
\newtheorem{theorem}{Theorem}[section]
\newtheorem{lemma}[theorem]{Lemma}

\newtheorem{assumption}{Assumption}
\newtheorem{definition}{Definition}
\theoremstyle{remark}
\newtheorem{remark}{Remark}[section]
\numberwithin{equation}{section} \numberwithin{theorem}{section}
\numberwithin{remark}{section} \linespread{1.15}

\begin{document}

\title{Qualitative Results on the Dynamics of a Berger Plate with Nonlinear Boundary Damping}
\author{
\begin{tabular}[t]{c@{\extracolsep{2em}}c@{\extracolsep{2em}}c}
Pelin G. Geredeli \footnote{ {Hacettepe University}, {Dept. of
Mathematics}, {06800 Beytepe}, {Ankara, Turkey};
\textit{pguven@hacettepe.edu.tr} } & Justin T. Webster \footnote{
{College of Charleston}, Dept. of Mathematics, {Charleston, SC};
\textit{websterj@cofc.edu} }\end{tabular} } \maketitle

\begin{abstract}
\noindent The dynamics of a (nonlinear) Berger plate in the absence
of rotational inertia are considered with inhomogenous boundary
conditions. In our analysis, we consider boundary damping in two scenarios: (i) free
plate boundary conditions, or (ii) hinged-type boundary
conditions. In either situation, the nonlinearity gives rise to
complicating boundary terms. 
In the case of free boundary conditions we show that well-posedness of
finite-energy solutions can be obtained via highly nonlinear boundary dissipation. Additionally, we show the existence of
a compact global attractor for the dynamics in the presence of
hinged-type boundary dissipation (assuming a geometric condition
on the entire boundary \cite{lagnese}). To obtain the existence of the attractor we explicitly
construct the absorbing set for the dynamics by employing energy
methods that: (i) exploit the structure of the Berger
nonlinearity, and (ii) utilize sharp trace results for the
Euler-Bernoulli plate in \cite{sharptrace}. 

We provide a parallel
commentary (from a
 mathematical point of view) to the discussion of modeling with Berger
versus von Karman nonlinearities: to wit, we describe the
derivation of each nonlinear dynamics and a discussion of the validity  of the Berger
approximation. We believe this discussion to be of broad value
across engineering and applied mathematics communities.

 \vskip.3cm \noindent \emph{Keywords}: Global attractors, nonlinear plate equation, well-posedness, asymptotic behavior of dynamical systems

 \vskip.3cm \noindent \emph{AMS
Mathematics Subject Classification 2010}: 35Q74, 74K20, 35B41,
35A01
\end{abstract}

\section{Introduction}

We consider the well-posedness and long-time behavior of
solutions to a boundary damped Berger plate equation taken in the \emph{%
absence of rotational inertia}. Its derivation and relation to
the scalar \emph{von Karman} nonlinearity \cite{springer,ciarlet,
lagnese} are discussed in depth in Section \ref{modeling}. We
consider the isotropic plate to be thin, as is usual in
\emph{large deflection} theory \cite{lagnese}. Thus we consider a
two dimensional domain $\Omega \subset \mathbb{R}^2$ with smooth
boundary $\Gamma$. The partial differential equation (PDE) model
of interest is then:
\begin{equation}  \label{plate}
\begin{cases}
u_{tt}+\Delta^2 u +\left[\gamma-\Upsilon\cdot\left( \ds
\int_{\Omega}\nabla u \cdot \nabla u ~d\Omega\right)\right]\Delta
u = p(\mathbf{x}) & \text{ in } (0,T)\times \Omega
\\
BC(u) & \text{ on } \Gamma \\
u(0)=u_0;~~u_t(0)=u_1. &
\end{cases}%
\end{equation}
The term $p\in L^2(\Omega)$ represents static pressure on the top
surface of the plate. The physical parameter $\Upsilon$ is fixed and positive, while $\gamma$ corresponds to in-plane stretching
($\gamma<0$) or compression ($\gamma>0$).
 In this treatment (i) we consider
the case $\gamma\ge0$, as is by now standard in
treatments of the
Berger plate \cite{ch-0,ch-l}, and (ii) we normalize $\Upsilon=1$. The term $%
BC(u) $ in \eqref{plate} represents the plate's boundary
conditions.

Throughout the paper, we will consider nonlinear damping acting
through two distinct types of boundary conditions; the nature of
damping will be critical to our well-posedness and long-time
behavior results. 
\begin{itemize} \item Free-clamped boundary conditions with dissipation, denoted {(FCD)}:
\begin{equation}
\begin{cases}
\Delta u+(1-\mu )B_{1}u=0~~\mathrm{on}~~\Gamma _{1},\notag \\
\partial _{\nu }(\Delta u)+(1-\mu )B_{2}u-\mu _{1}u=|u_{t}|^6 u_t~~\mathrm{on}%
~~\Gamma _{1},\notag \\
u=\partial _{\nu }u=0~~\mathrm{on}~~\Gamma _{0},%
\end{cases}%
 \label{freediss}
\end{equation}%
where $\Gamma =\Gamma _{0}\sqcup \Gamma _{1}$ is a disjoint union,
 $\Gamma _{0}\neq \emptyset $, and the boundary operators $B_1$
and $B_2$ are given by \cite{springer,lagnese}:
\begin{equation*}
\begin{array}{c}
B_1u = 2 \nu_1\nu_2 u_{xy} - \nu_1^2 u_{yy} - \nu_2^2
u_{xx}=-\partial_{\tau\tau}u-\left(\text{div}~ 
\nu\right)\partial_{\nu} u\;,
\\
\\
B_2u = \partial_{\tau} \left[ \left( \nu_1^2 -\nu_2^2\right)
u_{xy} + \nu_1
\nu_2 \left( u_{yy} - u_{xx}\right)\right]\,=\partial_{\tau}\partial_{\nu}%
\partial_{\tau}u.%
\end{array}%
\end{equation*}
$\nu=(\nu_1, \nu_2)$ is the unit outer normal to $\Gamma$, $\tau=
(-\nu_2, \nu_1)$ is the choice of unit tangent vector along
$\Gamma$. The parameter $\mu_1$ is nonnegative\footnote{Since $\Gamma
_{0}\neq \emptyset $, we could take  $\mu _{1}=0$ \cite{springer},
however, we retain it  for the sake of generality.}. The constant
$0<\mu<1$ has the meaning of the Poisson modulus.

\item Hinged boundary conditions with dissipation, denoted {(HD)}:
\begin{equation}
u=0;~~\Delta u=-D(\partial _{\nu }u_{t})~\mathrm{on}~~\Gamma,
 \label{hingeddiss}
\end{equation}
where the damping function $D(\cdot)\in C^1(\mathbb{R})$ is
monotone increasing, and $D(0)=0.$
\end{itemize}

\begin{remark}
It is clear that, barring issues of elliptic regularity, one can
consider various combinations of the above boundary conditions.
(This is done, for instance, throughout \cite{springer}.) In the
analysis of (FCD), we consider a disjoint portion of the boundary
to be clamped, primarily to simplify aspects of our analysis which
are not the principle focus, and to avoid technical issues of
elliptic regularity. One can take the entire boundary to be free,
so long as $\mu_1>0$.
\end{remark}

\begin{remark}\label{g1}
Another configuration (which is of physical interest) involves the so-called hinged-clamped boundary conditions.
One can consider the damping acting via moments on a {\em portion} of the boundary $\Gamma_1$ (again, disjoint from $\Gamma_0$---where the boundary is clamped).
This configuration, denoted by (HCD), is given by:
\begin{equation}
\begin{cases}
u=0;~~\partial_{\nu}u=0~\text{on}~~\Gamma_0 \\
u=0;~~\Delta u=-D(\partial _{\nu }u_{t})~\mathrm{on}~~\Gamma_1
 \label{hingedclampeddiss}
 \end{cases}
\end{equation}
We remark further on the (HCD) configuration in Remark \ref{g2}
below.
\end{remark}

For the Berger and von Karman evolutions, taken with {\em
homogeneous boundary conditions} (of clamped or hinged-type), the
theory is rather complete. However, in the two inhomogeneous cases
considered here the results we obtain differ dramatically from
those available for other nonlinear plate equations (e.g., von
Karman, Kirchhoff); this occurs since  {\em the advanced  theory for these evolution equations does not
apply}. In fact, the inhomogeneous boundary conditions are what
differentiate present analysis of the Berger plate from those
available in other available literature (see Section
\ref{previous}).

We seek to determine the effect of boundary damping on the
dynamics, and seek asymptotic behavior results (when possible)
under ``standard" assumptions on the plate domain and damping
functions.  We also provide a discussion of the validity of the
Berger model under these boundary contributions, including a
review of the pertinent engineering and mathematical literature.
By providing a modern analysis of the Berger evolution with
dissipation acting on the boundary, we attempt to reconcile the
hurdles in PDE analysis with those described in the engineering
literature. Indeed, the engineering literature reviewed here
provides numerical contrast between the von Karman and Berger
models in the presence of each of the standard boundary
conditions.

\vspace{.3cm} \noindent\textbf{Notation}: We will make use of
standard Sobolev spaces $W^{s,p}(\Omega)$
and denote norms in $H^{s}(\Omega)$ $(p=2)$ as $%
||\cdot||_s $ and $||\cdot||_0=||\cdot||_{L^2(\Omega)} $. We will
use the notation $(\cdot,\cdot)_{\Omega}$ for inner-products in
$L^2(\Omega)$ and $\langle \cdot, \cdot \rangle_{\Gamma}$ for
those in $L^2(\Gamma)$ (or a subset of $\Gamma$, as indicated by
the subscript where necessary). For simplicity, norms and inner
products written without subscript
 are taken to be $L^2$ of the appropriate domain (e.g., $(\cdot,\cdot)$ on $\Omega$ and $\langle \cdot, \cdot \rangle$ on $\Gamma$). The space
$W_0^{s,p}(\Omega)$ denotes the completion of
$C_0^{\infty}(\Omega)$ in the $W^{s,p}(\Omega)$ norm. We will use
the ubiquitous constant $C>0$, and denote critical dependencies
when necessary.

\subsection{Modeling Considerations}
\emph{This portion of the paper is  expository. It discusses the
origins of the von Karman and Berger nonlinearities, and provides
an engineering perspective on the validity of each model.}
\vskip.2cm
\label{modeling} Let $u^i(x,y,z,t)$ denote the $i$th component of
the displacement vector of the middle surface of an elastic plate
occupying the region $\Omega \subset \mathbb{R}^2$. Let
$\epsilon_{ij}$ and $\sigma_{ij}$ denote the standard strain and
stress tensors, respectively\footnote{Here, and below, we utilize the
summation convention and identify the spatial variables $x=x_1$,
$y=x_2$, and $z=x_3$.}. We assume the plate is homogeneous and
isotropic (with associated stress-strain relation).

The so called \emph{full von Karman system} is based on the
employment of a
variation principle applied to the energy, and the assumption of \emph{%
finite elasticity} \cite[Ch.2, pp. 13--20]{lagnese}. This is a
modification of the Kirchhoff approach, which is itself based on
(i) a linear strain-displacement relation and (ii) an assumption
that the linear filaments of the plate remain perpendicular to the
central plane of the plate throughout deflection. (The second
hypothesis is then linearized to produce the Kirchhoff equation.)
The full von Karman system is arrived at by replacing (i) above
with the nonlinear strain-displacement relation
\begin{equation*}
\epsilon_{ij}=\dfrac{1}{2}\left(\dfrac{\partial u ^i}{\partial x_j}+\dfrac{%
\partial u^j}{\partial x_i}\right)+\dfrac{1}{2}\dfrac{\partial u^k}{\partial
x_i}\dfrac{\partial u^k}{\partial x_j}.
\end{equation*}

The resulting system (which has a nice vectorial
representation---see \cite{koch} and references therein) consists
of a nonlinear beam equation in the transverse displacement,
coupled (nonlinearly) to a system of elasticity for the in-plane
displacements; much work has been done on this system \cite{koch,
puel} (and references therein). To simplify the structure of these
equations, one may neglect the in-plane acceleration components of
the model \cite{springer,ciarlet}. This allows for the decoupling
of the in-plane and transverse dynamics. The resulting
nonlinearity is referred to as the {\em scalar von Karman}
nonlinearity:
$$f_V(u)=-[u,v(u)+F_0],$$ where $F_0$ is a given in-plane load. The von Karman bracket  is employed:
\begin{equation}\label{vkbracket}[u,v]=u_{xx}v_{yy}+u_{yy}v_{xx}-2u_{xy}v_{xy}\end{equation}
 for all functions $u,v$ sufficiently smooth \cite{springer}.
The Airy stress function, $v(u)$ solves
$$\Delta^2 v =-[u,u]~\text{ in }~\Omega;~~v=\partial_{\nu}v =0~\text{ on } \Gamma.$$
\begin{remark}
We note that in the studies of nonlinear plates often rotational
inertia in the filaments of the plate (represented by the term
~$\displaystyle -(h^2/12)\Delta u_{tt}$ ~on the LHS of the plate
equation \eqref{plate}, where $h$ is the thickness of the plate)
is neglected. When $h$ is small (the plate is thin), this term is
discardable \cite{ciarlet,lagnese}. Mathematically, however, this term is
highly non-trivial and has substantial bearing on well-posedness
and long-time behavior results.
\end{remark}

Even with the simplification to a scalar equation, the complex
structure of the scalar von Karman nonlinearity (making use of a
nonlinear elliptic solver) provides incentive
 to attempt further simplification. We now paraphrase a review in \cite{inconsistent}:
\vskip.1cm \noindent {\em In 1955, Berger proposed in
\cite{berger} a modified von Karman system. This simplification is
based upon the assumption that the so called second strain
invariant of the middle surface is negligible.  The resulting
equations are simpler than von Karman's and Berger compared his
results to known solutions, and found good agreement. This
approach was generally accepted, though the hypothesis lacked a
clear mechanical interpretation. Yet, in many problems solved via
the Berger method the edges of the plates were assumed to be
restrained from in-plane movements, and, numerically, the
exactness of the method is associated to such restrictions. For
freely movable edges the accuracy of the results furnished by the
Berger approach becomes questionable. Indeed, the resulting
[numerical] accuracy depends on the order of the differential
operators appearing in the boundary conditions.
 In this way, hinged and free boundary conditions provide  more opportunities [than clamped conditions] for the solutions to become inaccurate or singular.
} \vskip.1cm Another analysis in \cite{studyberger} attempts to
find a rational mechanical basis for the Berger method. However,
the authors also conclude that the Berger results may not be a
consistently valid approximations across types of boundary
conditions. In \cite{newapproach}, it is elaborated that Berger's
line of thought leads to questionable results for movable edge
conditions owing to the fact that the neglect of second strain
invariant with movable edges fails to imply freedom of rotation in
the middle plane where the elastic stress exists. The novelty of
\cite{newapproach} is that the authors proceed to provide an
\emph{additional} nonlinear term in the equations of motion to
correct this, which they report has good accuracy with known solutions {\em for
all types of boundary conditions}.

\subsection{Previous Results and Mathematical Motivation}

\label{previous} \hspace{0.5cm}We now provide a general overview
of previous mathematical work done on the system in \eqref{plate}.
 Note that \emph{none of the results
below focus on inhomogenous boundary conditions};  to our knowledge
no previous works have provided a PDE analysis of the Berger plate with
dissipation acting on the boundary---though such analyses are
prevalent for the more ``complex" von Karman evolution.

The classical work in \cite{edenmilani} establishes results on the
existence of
exponential attractors for a more general Berger-like equation in $\mathbb{R}%
^n$ (which subsumes the standard Berger model), taken with
homogeneous hinged boundary conditions and fully-supported
interior damping. This reference discusses early work done by Ball (see
references in \cite{edenmilani}) on
extensible beam equations. The methods utilized by the authors of 
\cite{edenmilani} rely on the development of modified energy
functionals which allow one to circumvent the need for an explicit
construction of a Lyapunov function.\footnote{This is a key issue
below: in many nonlinear plate/extensible beam problems the
construction of a (strict) Lyapunov function requires a ``unique
continuation" result, which may not be generally available
\cite{g-l-w,pelin1}.}

 In the more recent book \cite{ch-2}, the Berger evolution with
homogeneous hinged boundary conditions (along with more general
nonlinear terms) is considered with fully-supported interior,
linear damping. Utilizing a Lyapunov approach, the existence of
smooth attractors is shown, but only in the presence of
(regularizing) \emph{rotational inertia} terms.

 In \cite{daniellorena} the authors study a non-dissipative von Karman
model arising in the context of \emph{piston theory}
\cite{bolotin,springer} for nonlinear flow-plate interactions.
This work also considers the case where rotational inertia is
absent, and the primary difficulties arise via (i) free boundary
conditions, as well as (ii) non-dissipative (piston theory) terms
appearing \eqref{plate}. The analysis does not concern the Berger
evolution, yet the authors must also combat non-dissipative terms
in the setting of free boundary conditions. 

In \cite{ma}, a Berger {\em beam} is considered with free-clamped boundary conditions; in this work stabilization is accomplished via a nonlinear boundary feedback acting in the third order free boundary condition. We also point out that to accommodate the non-dissipative terms which arise due to the interaction of the geometric nonlinearity and the free boundary condition (as discussed herein), the author {\em incorporates the nonlinear boundary contribution into the boundary condition at the free end of the beam}. Whether this is physical is unclear, however it does present an interesting mathematical consideration which could be investigated for free-clamped Berger plates.

The analysis in \cite{ji} deals with the boundary stabilization of
a Kirchhoff (polynomial) nonlinear plate, taken {\em with
rotational inertia}, and {\em linear damping} via a hinged
boundary condition (analogous to what we take). Though the
analysis is somewhat simplified (due to rotational inertia), this
work provides a guide for handling the non-dissipative,
second-order trace terms in the case of hinged boundary conditions
below.

 Finally, in perhaps the most modern account, the expansive monograph \cite
{springer} addresses the well-posedness and long-time behavior of
the scalar von Karman equations. However, this text also contains
 preliminary material on linear plates, and an abstract
presentation of nonlinear plates. Owing to this, many of the
results on well-posedness and long-time behavior  are valid for
Berger's evolution \emph{in the case of clamped or hinged} type
boundary conditions. However, if any type of free boundary
condition is employed (homogenous or inhomogeneous), the analysis
in \cite{springer}, although vast, does not apply. Moreover, the
best results in \cite{springer} for the von Karman evolution (with
no rotational inertia terms) with hinged dissipation (HD)
concerns {\em local compact attractors}. (In this reference they actually address the more general (HCD) conditions---allowing for a clamped portion {\em and} hinged dissipation portion (Remark \ref{g1})---with a geometric condition in force on $\Gamma_0$; their results apply to (HD) with no geometric condition by simply taking $\Gamma_0 = \emptyset$.) To obtain attractors in a
global sense, additional assumptions must be made (fully-supported
interior damping), and the resulting approach is {\em indirect}.
For a more detailed discussion relating these results to those
here, see the discussion following the presentation of Theorem
\ref{gatt} below.

In this treatment we are analyzing two cases where the Berger
dynamics have a marked difference from the von Karman dynamics. In
the case of free-clamped (FCD) type boundary conditions, Berger's
nonlinearity produces an additional non-dissipative term in the
dynamics which can only be accommodated via a {\em highly
nonlinear damping} to obtain well-posedness of the model. As such,
Section \ref{freeBC} can be viewed as a quantifiable mathematical
manifestation of the difficulties described above in applying
Berger's approximation to the von Karman model in the presence of
a free boundary component. For our analysis, we assume Berger's
hypothesis is in force and we analyze \eqref{plate} directly from
the equations, yielding non-dissipative effects which enter the
analysis at the level of finite energy.

In the case of hinged-type  boundary conditions (HD), after
 showing the well-posedness of the model (similar to what is done for (FCD)),
 we are interested in the asymptotic-in-time analysis. The standard nonlinear
 dissipation acting via moments on the boundary
is sufficient to obtain ultimate compactness of the dynamics (in
the form of a global attractor) {\em without any additional
damping or non-standard assumptions}---though we do require a
somewhat classical geometric assumption on the domain (albeit for
a {\em new reason}: the interaction between (HD) and the
nonlinearity).  In fact, we explicitly construct the {\em
absorbing ball} using techniques from \cite{c-e-l}, and along with
modern techniques  in the theory of dynamical systems (related to
the asymptotic smoothness property \cite{springer,kh}, see Sec.
4.2), we obtain the existence of a compact global attractor.

\subsection{Functional Setting}\label{grr}
The natural energy for linear plate dynamics is given by
\begin{equation*}
E(t)=E(u,u_t)=\frac{1}{2}\big\{
||u_{t}(t)||^{2}+a\big(u(t),u(t)\big)\big\}.
\end{equation*}
Here, $a(u,v)$ represents the potential energy and given by the
bilinear form
\begin{equation}
a(u,v)=\widetilde{a}(u,v)+\mu _{1}\int_{\Gamma _{1}}uv,
\label{a-uw}
\end{equation}%
where
\begin{equation}
\widetilde{a}(u,v)\equiv \int_{\Omega }(\Delta u \Delta v
-(1-\mu)[u,v]). \label{a-tild}
\end{equation}
Again, the notation $[u,v]$ corresponds to the von Karman bracket
(as introduced earlier in the text), and for all functions
sufficiently smooth \cite{springer}:
\begin{equation*}
\int_{\Omega} [u,v] d\Omega =
-\int_{\Gamma}\left((B_1u)\partial_{\nu} v-(B_2u) v
\right)d\Gamma.
\end{equation*}

\noindent Typically $\mu < 1/2$, but we allow for $\mu \le 1$ and note that  $\mu=1$ corresponds to the limiting case for
the boundary conditions, and the bilinear form $\widetilde
a(\cdot,\cdot)$ collapses to the standard bilinear form
$(\Delta\cdot,\Delta \cdot)_{\Omega}$ in the case of (HD)
boundary conditions. 

The above energy $E(t)$ dictates the state
space
\begin{equation}\mathscr{H}\equiv \begin{cases} H_{\Gamma _{0}}^{2}(\Omega)\times L^2(\Omega )& \text{ for }~(FCD),\\[.2cm]
(H^2\cap H_0^1)(\Omega)\times L^2(\Omega) & \text{ for
}~(HD).\end{cases}\end{equation}

\noindent Throughout this treatment, we will use the following
nonlinear energies associated to (\ref{plate}):
\begin{equation*}
\mathscr{E}(t)=\mathscr{E}(u,u_t)=E(t)+\Pi (u(t)),\hskip1cm
\widehat{E}(t)= E(t)+\frac{1}{4}||\nabla u(t) ||^{4},
\end{equation*}
where ~$\Pi$~ represents the non-dissipative and nonlinear
portion of the energy
\begin{equation}
\Pi(u)=\dfrac{1}{4}\big(||\nabla u||^{4}-2\gamma||\nabla
u||^2-4\int_{\Omega }pu \big). \label{pi}
\end{equation}

\subsection{Main Results and Discussion}

\label{results}

In this treatment the main results concern: well-posedness under
(FCD) and (HD) boundary conditions, as well as long time behavior of the
dynamics corresponding to (\ref{plate}) under (HD) boundary
conditions.

 Firstly, consider equation (\ref{plate}) with (FCD). We now
give a novel well-posedness result (see Section
\ref{abplate} for formal definitions of solutions):

\begin{theorem}
\label{mainresult0} With reference to \eqref{plate}, taken with
(FCD) boundary conditions and with initial data $(u_{0},u_{1})\in
\mathscr H$, for any $T>0$ there exists a unique generalized
solution $u\in C(0,T;\mathscr H)$ satisfying the energy equality
\begin{equation}
\mathscr{E}(t)+\int\limits_{s}^{t}\int\limits_{\Gamma
_{1}}|u_{t}|^8 +{\int\limits_{s}^{t}\int\limits_{%
\Gamma _{1}}(\gamma -\left\Vert \nabla u\right\Vert
^{2})(\partial_{\nu}u)u_t} = \mathscr{E}(s) \label{e-ineq}
\end{equation}%
for every $t>s>0$. The dynamics depend continuously on the initial data.
This implies that the map $(u(0),u_{t}(0))\mapsto (u(t),u_{t}(t))$
defines a strongly continuous semiflow $S_{F}(t)$ on $\mathscr{H}$.
\end{theorem}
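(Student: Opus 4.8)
The plan is to realize the dynamics as a locally Lipschitz perturbation of a maximal monotone flow, produce strong solutions for smooth data, and then obtain finite-energy \emph{generalized} solutions as their limits in $C(0,T;\mathscr{H})$. Writing $v=u_t$ and $y=(u,v)$, I would cast \eqref{plate}--\eqref{freediss} as $y_t+\mathcal{A}(y)=\mathcal{F}(y)$, where $\mathcal{A}(u,v)=(-v,\Delta^2 u)$ carries the entire (FCD) boundary structure in its domain---including the nonlinear shear condition $\partial_\nu(\Delta u)+(1-\mu)B_2u-\mu_1 u=|v|^6v$ on $\Gamma_1$---and $\mathcal{F}(u,v)=(0,\,p-(\gamma-\|\nabla u\|^2)\Delta u)$ collects the Berger forcing. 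Crucially, I keep the Berger term in \emph{interior} (undifferentiated) form so that $\mathcal{F}:\mathscr{H}\to\mathscr{H}$; the troublesome boundary contribution only materializes later, in the energy identity.

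The operator $\mathcal{A}$ is monotone on $\mathscr{H}$ (equipped with the inner product induced by $a(\cdot,\cdot)$): in the defining pairing the plate cross-terms cancel and one is left with $\langle|v|^6v-|\bar v|^6\bar v,\,v-\bar v\rangle_{\Gamma_1}\ge 0$ from the monotonicity of $s\mapsto|s|^6s$. Maximality reduces to solving the resolvent problem $\Delta^2u+u=F$ with the nonlinear Robin-type free condition on $\Gamma_1$, handled by a Minty--Browder/variational argument for the monotone boundary nonlinearity, with $\Gamma_0\neq\emptyset$ retained precisely to avoid the delicate elliptic regularity of a fully free plate. Since $\mathcal{F}$ is locally Lipschitz on $\mathscr{H}$ (the scalar coefficient $\gamma-\|\nabla u\|^2$ is bounded on energy-bounded sets and $u\mapsto\Delta u$ is bounded $H^2\to L^2$), standard nonlinear semigroup theory yields unique local-in-time strong solutions for data in $D(\mathcal{A})$.

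The heart of the matter is upgrading local to global, and this is where the degree of the damping is decisive. Strong solutions are regular enough to test \eqref{plate} with $u_t$ and derive \eqref{e-ineq} in differentiated form,
\[ \frac{d}{dt}\mathscr{E}(t)+\int_{\Gamma_1}|u_t|^8=-\,(\gamma-\|\nabla u\|^2)\,\langle\partial_\nu u,\,u_t\rangle_{\Gamma_1}, \]
the right-hand side being the non-dissipative boundary term produced by integrating the Berger operator by parts. I would estimate it by H\"older on $\Gamma_1$ with exponents $8$ and $8/7$, then by Young's inequality, absorbing $\tfrac12\int_{\Gamma_1}|u_t|^8$ into the dissipation. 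What remains is $C\,\big[(\gamma+\|\nabla u\|^2)\,\|\partial_\nu u\|_{L^{8/7}(\Gamma_1)}\big]^{8/7}$. Using $\|\nabla u\|^2\lesssim\mathscr{E}^{1/2}$ (from the quartic term in $\Pi$), the Poincar\'e inequality available since $u=0$ on $\Gamma_0$, and trace/interpolation (sharp plate trace theory \cite{sharptrace}) giving $\|\partial_\nu u\|_{L^{8/7}(\Gamma_1)}\lesssim\mathscr{E}^{3/8}$, this quantity is bounded by $C(1+\mathscr{E})$---that is, the Young exponent $8/7$ is exactly tuned so the growth is \emph{at worst linear}. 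A Gronwall argument then bounds $\mathscr{E}$ on every $[0,T]$, ruling out finite-time blow-up and globalizing the strong solutions. I expect this exponent bookkeeping to be the main obstacle: a weaker (e.g. linear) damping would leave a superlinear bound and only local existence, which is the precise sense in which the free Berger boundary forces \emph{highly nonlinear} dissipation.

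Finally, for uniqueness and continuous dependence I would run the same energy method on the difference $z=u-w$ of two solutions. The damping difference $\langle|u_t|^6u_t-|w_t|^6w_t,\,z_t\rangle_{\Gamma_1}\ge 0$ falls on the favorable side by monotonicity, while the interior Berger difference is controlled by $\|z\|_{\mathscr{H}}$ via the local Lipschitz bound on energy-bounded sets. The surviving non-dissipative boundary differences are estimated with H\"older using the uniform energy bound together with the spacetime regularity $u_t,w_t\in L^8(\Gamma_1\times(0,T))$ furnished by the dissipation---exactly the velocity-trace control that the highly nonlinear damping provides. This yields $\|z(t)\|_{\mathscr{H}}^2\le\|z(0)\|_{\mathscr{H}}^2\,e^{Ct}$, giving uniqueness and Lipschitz continuous dependence on bounded sets; density of $D(\mathcal{A})$ in $\mathscr{H}$ then defines the generalized solutions and the semiflow $S_F(t)$, with \eqref{e-ineq} passing to the limit.
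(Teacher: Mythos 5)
Your proposal is correct and follows essentially the same route as the paper: the abstract framework the paper invokes (Assumption \ref{ass1} and Theorem \ref{abs_wp} from \cite{springer}) is precisely the maximal-monotone-plus-locally-Lipschitz setup you build by hand (the term $\mathcal{A}G\overline{g}(G^{*}\mathcal{A}u_t)$ is the functional encoding of your nonlinear boundary condition in $D(\mathcal{A})$), and the globalization is the same mechanism---absorb the non-dissipative boundary term into the $|u_t|^8$ dissipation via H\"older/Young tuned so the remainder grows at most linearly in the energy, then Gronwall (the paper works with the $L^2(\Gamma_1)$ norm of $\partial_\nu u$ and Young exponents $(2,4,8,8)$ and $(4,8/5,8)$ rather than your $L^{8/7}$--$L^8$ pairing; the bookkeeping is equivalent, both exploiting $\|\nabla u\|^4\lesssim \mathscr{E}+C$). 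One small correction: the interpolation $\|\partial_\nu u\|_{L^2(\Gamma_1)}\lesssim \|u\|_2^{1/2}\|\nabla u\|^{1/2}$ you need is the multiplicative ``trace moment'' inequality of \cite{tracemoment}, not the sharp second-order trace theory of \cite{sharptrace}, which the paper uses only in the (HD) long-time analysis.
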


\begin{remark}
We emphasize that with only {\em linear boundary damping} the
global existence of the solutions is an open problem, as one
cannot obtain the necessary a priori bounds. To overcome this
difficulty we consider a nonlinear boundary dissipation which
has an extreme (yet minimal with respect to our analysis)
polynomial structure.
\end{remark}

Secondly, we analyze the Berger model with (HD) boundary
conditions. It should be noted that under (HD)  conditions (unlike the (FCD) conditions)
 the structure of equation (\ref{plate})
does not yield any non-dissipative terms in the  energy relation.
This makes the direct application of the abstract plate theory
more straightforward for obtaining a well-posedness result. Before
giving this, the following assumption is needed on the damping:

\begin{assumption}
\label{ass2} There exists positive constants ~$0<m<M<\infty$ such
that
\begin{equation*}
m \leq D^{\prime}(s) \leq M , ~~ |s| \geq 1
\end{equation*}
\end{assumption}

\begin{remark}
For the purposes of well-posedness alone, one could impose weaker assumptions
on the damping mechanism $D(s)$; e.g., $D(s)$ could exhibit
arbitrary polynomial growth. However, for our subsequent results
concerning long time behavior it is necessary that $D(s)$ satisfy the
stronger assumption above which is also utilized in \cite{c-e-l,
springer}.
\end{remark}

Now we give the well-posedness result of \eqref{plate} taken with
(HD) boundary conditions:

\begin{theorem}
\label{wellpHD} Let Assumption \ref{ass2} hold. With reference to (%
\ref{plate}) taken with (HD) boundary conditions and with initial
data $(u_{0},u_{1})\in \mathscr H$, for all $T>0$ there exists a
unique generalized solution $u\in C(0,T;\mathscr H)$ depending
continuously on the initial data. This implies that the map
$(u(0),u_{t}(0))\mapsto (u(t),u_{t}(t))$ defines a strongly continuous
semiflow $S_{H}(t)$ on $\mathscr{H}$. The following energy
equality holds:
\begin{equation}
\mathscr E (t)+\int\limits_{s}^{t}\int\limits_{\Gamma }D
(\partial _{\nu }u_{t})\left(\partial _{\nu }u_{t}
\right)=\mathscr E (s)\label{enh}.
\end{equation}%
\end{theorem}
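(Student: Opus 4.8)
The plan is to recast \eqref{plate} under (HD) as an abstract first-order Cauchy problem on $\mathscr{H}=(H^2\cap H_0^1)(\Omega)\times L^2(\Omega)$,
$$\frac{d}{dt}y+\mathbb{A}y+\mathcal{N}(y)=\mathcal{F},\qquad y=(u,u_t),$$
and to apply the abstract theory of maximal monotone operators perturbed by locally Lipschitz maps, as developed in \cite{springer}. Here $\mathbb{A}$ is the (nonlinear) operator carrying the conservative plate dynamics together with the boundary damping $D(\partial_\nu u_t)$ encoded in the moment condition $\Delta u=-D(\partial_\nu u_t)$, while $\mathcal{N}$ collects the Berger contribution $(\gamma-\|\nabla u\|^2)\Delta u$ and $\mathcal{F}$ the pressure $p$. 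The cleanest structural observation is that under (HD) the nonlinearity produces no boundary term: testing $(\gamma-\|\nabla u\|^2)\Delta u$ against $\phi\in H^2\cap H_0^1$ and integrating by parts yields exactly $-(\gamma-\|\nabla u\|^2)(\nabla u,\nabla\phi)$, with no trace contribution since $\phi=0$ on $\Gamma$. This is precisely the feature that fails for (FCD) and makes the present case a direct application of the abstract machinery.

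First I would show $\mathbb{A}$ is maximal monotone on $\mathscr{H}$. Monotonicity reduces, after computing the $\mathscr{H}$-inner product of the difference, to $\langle D(\partial_\nu v_1)-D(\partial_\nu v_2),\partial_\nu v_1-\partial_\nu v_2\rangle_{\Gamma}\ge0$, immediate from the monotonicity of $D$; maximality (the range condition that $I+\mathbb{A}$ is onto) reduces to solving a nonlinear elliptic biharmonic-type boundary value problem in which the monotone operator $D$ enters the moment condition, which is handled by standard surjectivity results for coercive monotone operators. Thus $-\mathbb{A}$ generates a nonlinear contraction semigroup. Next I would verify that $\mathcal{N}$ is locally Lipschitz from $\mathscr{H}$ into $\mathscr{H}$: writing $\|\nabla u\|^2\Delta u-\|\nabla w\|^2\Delta w$ and using $\big|\,\|\nabla u\|^2-\|\nabla w\|^2\,\big|\le C(\|\nabla u\|+\|\nabla w\|)\|\nabla(u-w)\|$ together with $\Delta u,\Delta w\in L^2$ gives the Lipschitz bound on bounded sets. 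The abstract perturbation theorem then yields a unique local-in-time generalized solution, and applying monotonicity of $\mathbb{A}$ together with Gronwall to the difference of two trajectories gives continuous dependence on the data on bounded sets, hence the strongly continuous semiflow $S_H(t)$.

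To pass from local to global existence I would exploit the energy identity. Testing (formally) with $u_t$ produces $\frac{d}{dt}\mathscr{E}(t)+\langle D(\partial_\nu u_t),\partial_\nu u_t\rangle_{\Gamma}=0$, since $(\nabla u,\nabla u_t)=\tfrac12\frac{d}{dt}\|\nabla u\|^2$ reconstructs exactly $\Pi(u)$ and $(p,u_t)=\frac{d}{dt}(p,u)$. Because $\gamma\ge0$, the quartic term in $\Pi$ dominates the quadratic one, so $\Pi$ is bounded below after absorbing $(p,u)$ into $\tfrac12\|\Delta u\|^2$ via Young's inequality and Poincaré; then \eqref{enh} forces a uniform-in-time bound on $\|u_t\|^2+\|\Delta u\|^2$, precluding finite-time blow-up and extending the solution to every $T>0$.

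The delicate point is establishing the energy \emph{equality} \eqref{enh} for generalized, not merely strong, solutions. The identity is transparent for strong solutions, where $\partial_\nu u_t$ is a genuine $L^2(\Gamma)$ trace, but a finite-energy solution has only $u_t\in L^2(\Omega)$, for which $\partial_\nu u_t$ is not defined by Sobolev trace theory. I would therefore obtain \eqref{enh} by approximating with strong solutions, using the strong-solution energy identity to get uniform bounds on $\int_s^t\langle D(\partial_\nu u_t^n),\partial_\nu u_t^n\rangle_{\Gamma}$, invoking Assumption \ref{ass2} (the near-linear growth $m\le D'\le M$ for large argument, which bounds $\partial_\nu u_t^n$ in $L^2(\Gamma\times(s,t))$), and identifying the weak limit of $D(\partial_\nu u_t^n)$ by a Minty-type monotonicity argument. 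This limit passage, rather than any part of the generation theory, is the main obstacle, and is exactly where the structural simplicity of (HD) and Assumption \ref{ass2} are used decisively.
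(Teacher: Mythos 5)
Your proposal is correct and takes essentially the same route as the paper: the paper verifies that the (HD) configuration fits the abstract model \eqref{abs_plate1}---with $G$ the biharmonic extension map, $G^{\ast}\mathcal{A}=\partial_{\nu}\big|_{\Gamma}$, $\overline{g}$ the Nemytskij operator induced by $D$, and $F(u,v)=-f_B(u)+p$ locally Lipschitz exactly as in Lemma \ref{oper}---and then invokes Theorem \ref{abs_wp}, whose internal engine is precisely the maximal-monotone-plus-locally-Lipschitz generation scheme you rederive by hand. The only difference is presentational: you unpack the monotone generation, the Minty-type limit passage for the energy equality, and the global a priori bound explicitly, whereas the paper delegates these steps to the abstract theory of \cite[Sections 2.4 and 4.2]{springer} under Assumption \ref{ass2}, together with the density of $\mathscr{D}(\mathcal{A})\times(\ker[G^{\ast}\mathcal{A}]\cap \mathscr{D}(\mathcal{A}^{1/2}))$ in $\mathscr{H}$.
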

\begin{remark}
Assumption \ref{ass2} provides linear bounds for the damping
function from below and above which ensures the validity of the
energy equality (for more detail, see \cite[Section
4.2]{springer}).
\end{remark}
 Existence and properties of compact attractors for nonlinear plate models (e.g., von Karman,
Berger, Kirchhoff) have been studied in various contexts over the
last 30 years.\footnote{See Section \ref{previous}, also see the
Appendix for a discussion of dynamical systems and concepts
relevant to the treatment here.} In the present case, the
dynamical system associated to the Berger dynamics with (FCD)
cannot easily be shown to be dissipative. The absence of a strict
Lyapunov function (not readily available)  seems to preclude
showing the existence of a compact global attractor for the
dynamics. However, we can show {\em  dissipativity} of the
dynamical system under (HD) boundary conditions. Thus, by showing
the {\em asymptotic smoothness} property of the associated
dynamical system, the plate dynamics under (HD) will admit  the
existence of a \emph{compact global attractor} for finite energy,
generalized solutions. For this, we need an additional, standard,
star-shaped geometric assumption on
 the boundary \cite{springer,lagnese}:

\begin{assumption}\label{geom} There exists a point $\mathbf{x}_{0}\in \mathbb{R}^2$ such that $%
h(\mathbf{x})=\mathbf{x}-\mathbf{x}_{0}$ has the property that
$h\cdot \nu \ge 0 $ for all $\mathbf{x}\in \Gamma$, where $\nu $
is the outward normal vector to $\Gamma$.
\end{assumption}
\begin{remark}\label{star}
The star-shaped condition given above arises in boundary control,
when the boundary (or just a portion) is subjected to a given
feedback. In using state-of-the-art trace estimates
\cite{sharptrace}, we can accommodate boundary terms
associated with the linear dynamics {\em without a geometric
condition}. However, to deal with the nonlinear Berger
contribution on the boundary (in utilizing the flux multiplier
$h\cdot\nabla u$), we utilize
 the star-shaped assumption on $\Gamma$. Throughout \cite{lagnese}
 a star-shaped and star-complemented geometric condition (what is in Assumption
\ref{geom}, with the additional assumption that $h(\mathbf{x})
\cdot \nu \le 0$ on $\Gamma_0$) is in force for obtaining uniform
decay rates. Such a star-shaped, star-complemented assumption is
also used for various configurations in the context of von Karman
equations in \cite{springer}. We note that our assumption on $\Gamma$ is
precisely what \cite{sharptrace} can eliminate for linear
dynamics while still obtaining the same results of
\cite{lagnese}; however, at present it seems indispensable to
have an assumption on the boundary $\Gamma$ {\em due to the
interaction of the nonlinearity and the boundary conditions}.
\end{remark}

We note that at present the best result obtained for non-rotational von
Karman dynamics in the (HD) configuration (with additional damping and assumptions) is that of {\em local
attractors} (\cite[Theorem 10.5.7]{springer}). This is to say that
when one restricts the semiflow $(S_H(\cdot),\mathscr H)$ to a
ball of radius $R$ of initial data, a compact attractor can be
shown for the restricted dynamical system. This attractor is not necessarily uniform with respect to
large $R$. Such a result follows (though not immediately) from the
so called {\em asymptotic smoothness} property of the dynamics,
and estimates which yield a {\em local absorbing ball}. (See
\cite[Section 10.5]{springer} for this presentation.)  To obtain
a {\em global attractor} the authors of \cite{springer} utilize
the fact that a gradient dynamical system, which is also
asymptotically smooth, has a compact global attractor (Theorem
\ref{gradsmooth} in the Appendix here). To show the gradient property
of the dynamics, they rely upon (additional) fully-supported
interior damping. This damping yields a ``unique continuation"
property for the dynamics (or, from another point of view, allows for ``uniqueness"
in a compactness-uniqueness argument). This approach, and the requisite additional damping, is
 necessary because ``direct multipliers" do
not yield a uniform absorbing ball for the von Karman
dynamics---owing to boundary contributions and the structure of the ~$\Pi$~ for von Karman dynamics.

In the case of Berger's nonlinearity, we are able to adapt an
approach in \cite{c-e-l} to show the existence of an absorbing set
for the entire dynamics. {\em This is precisely because of the structure of the Berger nonlinearity}. The
discussion above brings us to the primary result concerning
long-time behavior of the Berger dynamics with (HD) boundary
conditions:
\begin{theorem}\label{gatt}
Consider (\ref{plate}) taken with (HD) boundary conditions; under
Assumptions \ref{ass2} and \ref{geom}, the dynamical system
$(S_H(t),\mathscr H)$ has a compact global attractor.
\end{theorem}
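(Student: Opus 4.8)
The plan is to invoke the standard criterion from the theory of dissipative dynamical systems (as in \cite{springer,kh} and recalled in the Appendix): a dynamical system that is both \emph{dissipative} (possesses a bounded absorbing set) and \emph{asymptotically smooth} admits a compact global attractor. Theorem \ref{wellpHD} already furnishes the strongly continuous semiflow $(S_H(t),\mathscr H)$ together with the energy equality \eqref{enh}, so the work reduces to verifying these two properties.

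First I would establish dissipativity by explicitly constructing a bounded absorbing ball, following the energy-multiplier strategy of \cite{c-e-l}. The point of departure is that, unlike the von Karman potential, the Berger potential $\Pi$ in \eqref{pi} is \emph{coercive} modulo lower-order terms: the quartic $\tfrac14\|\nabla u\|^4$ dominates the indefinite contributions $-\tfrac{\gamma}{2}\|\nabla u\|^2-\int_\Omega pu$, so that $\mathscr E(t)$ controls $\|(u,u_t)\|_{\mathscr H}^2$ from below up to an additive constant. To extract decay I would form a perturbed functional $V(t)=\mathscr E(t)+\varepsilon\Psi(t)$ with $\Psi(t)=(u_t,\,h\cdot\nabla u)_\Omega+\tfrac{\kappa}{2}(u_t,u)_\Omega$. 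Testing \eqref{plate} against $u$ produces, using $u=0$ on $\Gamma$, the term $-a(u,u)$ together with the full quartic $\|\nabla u\|^4$; testing against the flux multiplier $h\cdot\nabla u$ recovers the energy $E(t)$ with a favorable sign (note $h\cdot\nabla u=(h\cdot\nu)\partial_\nu u$ on $\Gamma$ since $u=0$ there) and surfaces boundary traces whose sign is fixed by the star-shaped Assumption \ref{geom}. The damping contributes $\int_\Gamma D(\partial_\nu u_t)\,\partial_\nu u_t$, bounded above and below via Assumption \ref{ass2}, and the sharp trace results of \cite{sharptrace} absorb the residual $\partial_\nu u_t$ traces into the damping without any further geometric hypothesis. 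For $\varepsilon,\kappa$ small one obtains $\frac{d}{dt}V\le -c\,V+C$, whence $\limsup_{t\to\infty}\mathscr E(t)\le R_0^2$ for a universal $R_0$; that is, the ball $B_{R_0}\subset\mathscr H$ is absorbing.

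Next I would verify asymptotic smoothness through a difference-of-solutions (quasi-stability) argument. Fix a bounded set, let $u^1,u^2$ be trajectories issuing from it, and set $z=u^1-u^2$. Subtracting the two copies of \eqref{plate} and testing the difference equation against $z_t$ (and, where needed, against the flux multiplier applied to $z$) yields an energy relation for $z$ in which the damping difference $\langle D(\partial_\nu u^1_t)-D(\partial_\nu u^2_t),\,\partial_\nu z_t\rangle_\Gamma\ge 0$ has the correct sign by monotonicity of $D$. The difference of the Berger terms $[\gamma-\|\nabla u^1\|^2]\Delta u^1-[\gamma-\|\nabla u^2\|^2]\Delta u^2$ is \emph{locally Lipschitz} on the bounded set and splits into a piece absorbed by the $z$-energy and a remainder that is \emph{compact}, being quadratic in lower-order norms such as $\|\nabla z\|$. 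Feeding the resulting inequality $E_z(T)\le C\!\int_0^T\mathrm{(l.o.t.)}\,dt+\text{compact}$ into the compactness criterion of \cite{kh} (equivalently the Chueshov--Lasiecka criterion in \cite{springer}) gives asymptotic smoothness of $(S_H(t),\mathscr H)$.

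I expect the main obstacle to be the \emph{nonlinear boundary contributions arising from the interaction of the Berger term with the hinged dissipation}. Under (HD) the condition $\Delta u=-D(\partial_\nu u_t)$ means the flux multiplier generates boundary integrals involving $\Delta u$, hence $D(\partial_\nu u_t)$, coupled to $\partial_\nu u$ and to the nonlinear coefficient $\gamma-\|\nabla u\|^2$; keeping these of good sign is exactly what forces Assumption \ref{geom} (cf. Remark \ref{star}) and where the traces of \cite{sharptrace} are indispensable. Once dissipativity and asymptotic smoothness are established, the abstract theorem yields the compact global attractor for $(S_H(t),\mathscr H)$, completing the proof.
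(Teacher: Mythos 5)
Your overall architecture is the same as the paper's: well-posedness from Theorem \ref{wellpHD}, then dissipativity plus asymptotic smoothness, then the abstract characterization (Theorem \ref{0} in the Appendix). The asymptotic smoothness sketch is also essentially the paper's (difference of trajectories, monotonicity of the damping difference, splitting the Berger difference into an energy-absorbable piece and lower-order terms, then the compensated compactness criterion of Theorem \ref{psi}), although you understate the work there: the Berger nonlinearity is \emph{critical}, not compact, at the energy level, so ``locally Lipschitz with compact remainder'' is not available directly; the paper needs the time integration-by-parts decomposition of Lemma \ref{decomp} to produce the key inequality \eqref{usethis}.

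The genuine gap is in your dissipativity argument. You propose a perturbed Lyapunov functional $V=\mathscr E+\varepsilon\Psi$ and a pointwise-in-time differential inequality $\frac{d}{dt}V\le -cV+C$. This cannot be closed. Differentiating $\Psi$ and substituting the equation produces the boundary integrals $\int_{\Gamma}\partial_\nu(\Delta u)(h\cdot\nabla u)$ and $\int_{\Gamma}(\Delta u)\,\partial_\nu(h\cdot\nabla u)$; by \eqref{512} the latter contains the second-order traces $\partial_{\nu\nu}u$ and $\partial_{\nu\tau}u$, which are \emph{not} bounded by $E(t)$ for $u\in H^2(\Omega)$, and the former is a third-order trace. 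The sharp trace result you invoke (\cite{sharptrace}, Theorem \ref{sharpt} here) is an interior-in-time \emph{space-time} estimate on $[\alpha,T-\alpha]$; it cannot be applied at a fixed instant $t$, so it cannot feed a differential inequality. This is precisely why the paper works instead with an integrated observability estimate over $[0,T]$ (Lemma \ref{obsineq}) and then iterates $\mathscr E_M((m+1)T)\le \eta\,\mathscr E_M(mT)+K$ on successive intervals, following \cite{c-e-l}. Second, your claim that the star-shaped Assumption \ref{geom} by itself fixes the sign of the nonlinear boundary contribution is false: the multiplier identity \eqref{mult2} produces
\begin{equation*}
\int_{0}^{T}\bigl(\gamma-\|\nabla u\|^{2}\bigr)\int_{\Gamma}(h\cdot\nu)\,|\partial_\nu u|^{2},
\end{equation*}
whose coefficient changes sign along the trajectory even when $\gamma\ge 0$. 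The paper's resolution is not a sign argument but a structure-specific cancellation: applying $A^{-1}$ (the inverse Dirichlet Laplacian) to the equation gives the exact identity $A^{-1}f_B(u)=(\|\nabla u\|^2-\gamma)u$ (see \eqref{relation2}), and re-expressing $\partial_\nu(\Delta u)$ from the equation then combines the nonlinear boundary terms into $-\int_0^T\|\nabla u\|^2\int_\Gamma(h\cdot\nu)(\partial_\nu u)^2\le 0$ as in \eqref{hat1}, which is only then discarded via Assumption \ref{geom} (the residual $\gamma$-term being lower order). Without identifying this cancellation---which is exactly where the Berger structure is exploited and which has no analogue for von Karman---your energy estimate does not close, and no absorbing ball is obtained.
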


\begin{remark}\label{g2}
The above holds in the case of (HCD) boundary conditions (see Remark \ref{g1}), where $\Gamma=\Gamma_0 \sqcup \Gamma_1$ and the dissipation is active only on $\Gamma_1$. This will be addressed in the forthcoming manuscript \cite{new1}, which will also investigate further properties (such as dimensionality and smoothness) of the attractor given in Theorem \ref{gatt}. The key point in analyzing the partially-damped boundary configuration is to address the higher-order trace term $\Delta u$ on the uncontrolled portion of the boundary $\Gamma_0$. A geometric condition on the entire boundary (for instance, the star-complemented, star-shaped condition mentioned in Remark \ref{star}) is not sufficient in obtaining estimates which control the size of the absorbing ball. Rather, precise control of ~$\Delta u |_{\Gamma_0}$~ must be obtained via techniques which can be found, for instance, in \cite{george1,george2} (and references therein).
\end{remark}

\subsection{Technical Challenges and Contributions}\label{tech}

With regard to much of the multiplier analysis,
Berger's nonlinearity behaves more nicely than von Karman. And, in
the case of homogeneous hinged or clamped boundary conditions, the
analysis of the von Karman plate essentially {\em subsumes} the
analysis of the Berger evolution. However, under (FCD) boundary
conditions (with $F_0 \in H_0^2(\Omega)$), we have the disparity: \begin{align}
(f_V(u),u_t)_{\Omega}=(-[u,v(u)+F_0],u_t)_{\Omega} = &
~\dfrac{1}{2}\dfrac{d}{dt}\left\{\frac{1}{2}||\Delta
v(u)||_{\Omega}^2-([u,F_0],u)_{\Omega}\right\}\\[.2cm]
(f_B(u),u_t)_{\Omega}=\left((\gamma-||\nabla u||_{\Omega}^2)\Delta u ,u_t\right)_{\Omega}=&~ \dfrac{1}{2}\dfrac{d}{dt}%
\left\{\frac{1}{2}||\nabla u||_{\Omega}^4-\gamma ||\nabla
u||_{\Omega}^2\right\} \nonumber \\ + & \boxed{(\gamma-||\nabla
u||^2)\langle\Dn u, u_t\rangle_{\Gamma_1}}.\label{bergfreeterm}
\end{align}
The boxed term highlights the fundamental difference of the
dynamics, as the Berger-free dynamics contain an inherently
non-dissipative term with which we must contend.

In the case of (HD) boundary conditions, we note that the terms
of interest arise from the biharmonic term---which plays a key
role  in the multiplier analysis. We have for any $\phi $
sufficiently smooth
\begin{equation}
(\Delta ^{2}u,\phi )_{\Omega }=(\Delta u,\Delta \phi )_{\Omega
}+\langle\partial _{\nu }(\Delta u),\phi \rangle_{\Gamma}+\langle
D(\partial _{\nu }u_{t}),\partial _{\nu }\phi \rangle_{\Gamma}
\label{hingedbiharmonic}
\end{equation}%
In this case of $\phi = h \cdot \nabla u$ (for $h$ a ``nice" vector field, $u \in H^2(\Omega)$) the {\em damping on the boundary is pitted against a
high order boundary term}, which must be accommodated for any
sort of long-time behavior analysis.  Such trace terms must be
handled delicately. Here, we
are able to (via the special structure of the Berger nonlinearity)
control these boundary terms {\em directly}, in a way that allows
techniques from \cite{c-e-l} to obtain.

\subsubsection{Principal Challenges}
 The non-dissipative term above for (FCD) in \eqref{bergfreeterm} appears in the
 energy relation for the dynamics (and it is absent for von Karman dynamics).
 We follow the modern approach in \cite{springer},  using the abstract plate theory developed therein, and we must utilize
fully nonlinear damping of high polynomial degree.
 As such, we must address the technical difficulties associated to such damping
when utilizing energy methods. For obtaining dissipativity
and asymptotic smoothness of the dynamical system associated to
the (HD) dynamics, we  need to handle the nonlinear damping carefully.

In both configurations---(FCD) and (HD)---obtaining the results
in Section \ref{results} requires addressing the criticality (with
respect to the finite energy topology) of the Berger nonlinearity.
Similar to the non-rotational von Karman dynamics, the
Berger nonlinearity is not {\em compact}, and this
requires some care in energy estimates.

 Finally, what is perhaps the most challenging aspect of the analysis herein, is the
 difficulty involved in handling the trace terms associated to (HD) dynamics in the
 long-time behavior analysis. These terms include the standard linear terms---due to
 the principal biharmonic part---as well as ``new" boundary terms arising due to the interaction of the
 boundary conditions, requisite multipliers, and the Berger nonlinearity. We must deal carefully with signed
 terms on the boundary (including imposing a geometric assumption on the domain), as well as utilize
 the sharp trace results on second order traces (as in \cite{sharptrace}).

\subsubsection{Principal Contributions}
In what follows we address each of the points outlined above, as
well as make additional contributions:
\begin{itemize}
\item We show that a high (polynomial) degree of nonlinear damping can
accommodate
 (at least in terms of well-posedness) the free-type boundary condition (FCD).

\item We demonstrate asymptotic smoothness for the dynamical system
associated with
 the non-rotational Berger dynamics under (HD) conditions. This utilizes a 
 decomposition of Berger's nonlinearity, akin to the one utilized in the analogous
 result for non-rotational von Karman dynamics \cite[pp. 496--497]{springer}.

\item We critically use what is in \cite{ji} in considering (HD) conditions,
but we have a different treatment of trace terms
(particularly in the use of the sharp estimates in
\cite{sharptrace}). This has the benefit of extending the approach
in \cite{ji} to the non-rotational plates considered here.

 \item For the (HD) case, we are able to adapt the approach in
\cite{c-e-l} (corresponding to a semilinear wave equation with
boundary damping) to construct the compact global
attractor by directly obtaining estimates on the absorbing ball. This
depends in a critical way on the special structure of the Berger
nonlinearity (and does not seem possible for von Karman). We point
out that, in some sense, we obtain a stronger result than in the
analogous case for von Karman in \cite{springer}.
\begin{remark} The authors of \cite{springer} utilize additional assumptions on the
damping to obtain a unique continuation result that provides a
strict Lyapunov function for the dynamics. Then, they use an
indirect result (for a {\em gradient dynamical system}, asymptotic
smoothness implies the existence of a compact global
attractor---see the Appendix, Theorem \ref{0}); this is to
circumvent the need for direct estimation on the absorbing ball in
the case of non-rotational von Karman dynamics.\end{remark}

 \item Most importantly, by addressing non-rotational Berger dynamics
with boundary dissipation we provide results which seem to be
conspicuously missing in the mathematical literature. As we have
pointed out, the case of (HD) and (FCD) seem to be pathological,
in that the analysis of the Berger dynamics is not subsumed by
that of the von Karman.
\end{itemize}

\section{Abstract Plate Equations and Well-Posedness}\label{abplate}

The issue of well-posedness of strong or finite energy solutions
for nonlinear plates is recent. For a detailed and complete
discussion of general, abstract plate equations the reader is
referred to \cite{springer}. For clarity, let us recall some
definitions and results. The general  model arising in nonlinear
plate dynamics is:
\begin{equation}\label{abs_plate1} \begin{cases}
u_{tt}(t)+\mathcal{A}u(t)+\mathcal{A}G\overline{g}(G^{\ast }\mathcal{A}%
u_{t}(t))=F(u(t),u_{t}(t)),\text{ \ \ \ \ \ }t>0
\\
u(0)=u_{0},\text{ \ \ \ \ \ \ \ }u_{t}(0)=u_{1}.
\end{cases}
\end{equation}%

 \noindent To address the well-posedness for problem (\ref%
{abs_plate1}), we must give assumptions and properties of the
operators.

\begin{assumption}
\label{ass1} With reference to problem (\ref{abs_plate1}): \

\begin{itemize}
\item Let $\mathcal{A}$ be a closed, linear, positive, self-adjoint operator
acting on a Hilbert space $\mathcal{H}$, with $\mathscr{D}(\mathcal{A}%
)\subset \mathcal{H}.$

\item Let $U$ be another Hilbert space, and $U_{0}$ be a reflexive Banach
space, such that $U_{0}\subseteq U\subseteq U_{0}^{^{\prime }}.$
Additionally, assume that $\overline{g}:U_{0}\rightarrow
U_{0}^{^{\prime }}$
is a continuous mapping such that $\overline{g}(0)=0$ and%
\begin{equation*}
\left\langle \overline{g}(v_{1})-\overline{g}(v_{2}),v_{1}-v_{2}\right%
\rangle \geq 0,\text{ \ \ \ \ for all }v_{1},v_{2}\in U_{0}
\end{equation*}
where $\left\langle\cdot,\cdot\right \rangle$ denotes the scalar
product on $U$, or the duality pairing between $U_{0}$ and
$U_{0}^{^{\prime }}$.

\item The linear operator $G:U_{0}^{^{\prime }}\rightarrow \mathcal{H}$
has that $\mathcal{A}^{1/2}G:U_{0}^{^{\prime }}\rightarrow
\mathcal{H}$
\ is bounded (or equivalently, $G^{\ast }\mathcal{A}:\mathscr{D}(\mathcal{A}%
^{1/2})\rightarrow U_{0}$ is bounded, where the adjoint operator
$G^{\ast }$ is defined by the relation $\left\langle G^{\ast
}u,v\right\rangle =\left\langle u,Gv\right\rangle $).

\item Let V be a Hilbert space satisfying $\mathscr{D}(\mathcal{A}%
^{1/2})\subset V\subset \mathcal{H}\subset V^{^{\prime }}\subset \mathscr{D}(%
\mathcal{A}^{1/2})^{^{\prime }}$ with all injections being
continuous and dense. The nonlinear operator
$F:\mathscr{D}(\mathcal{A}^{1/2})\times
V\rightarrow V^{^{\prime }}$ is locally Lipschitz; that is:%
\begin{equation*}
\left\Vert F(u_{1},v_{1})-F(u_{2},v_{2})\right\Vert _{V^{^{\prime
}}}\leq
L(K)\left(\left\Vert \mathcal{A}^{1/2}(u_{1}-u_{2})\right\Vert _{\mathcal{H}%
}+\left\Vert v_{1}-v_{2}\right\Vert _{V}\right)
\end{equation*}%
for all $(u_{i},v_{i})\in \mathscr{D}(\mathcal{A}^{1/2})\times V$ such that $%
\left\Vert \mathcal{A}^{1/2}u_{i}\right\Vert
_{\mathcal{H}},\left\Vert v_{i}\right\Vert _{V}\leq K.$
\end{itemize}
\end{assumption}

\begin{remark}
In concrete applications, the term $\mathcal{A}G\overline{g}(G^{\ast }%
\mathcal{A}u_{t}$) models the boundary dissipation. Here, the
operator $G$ typically represents a suitable Green's map, and the
mapping $\overline{g}:U_{0}\rightarrow U_{0}^{^{\prime }}$ is a
Nemytskij-type operator (see Section 3 and 4).
\end{remark}

\noindent If we rewrite problem \eqref{abs_plate1} as a first
order equation we have the equivalent problem

\begin{equation*}
\frac{d}{dt}\mathbf u(t)+\mathbb A\mathbf
u(t)=\Big[\begin{array}{c}
                                                    0 \\
                                                    F(u(t),u_t(t)) \\
                                                  \end{array}\Big],~~
\mathbf u(0)=\mathbf u_0\equiv(u_0,u_1),
\end{equation*}
where $\mathbf u(t)=(u(t),u_t(t))$ and the operator $\mathbb
A:\mathscr{D}(\mathcal{A}^{1/2})\times V\rightarrow
\mathscr{D}(\mathcal{A}^{1/2})\times V$ is defined by
\begin{equation}\label{Atild}
\mathbb A=\Big[\begin{array}{cc}
                0 & -I \\
                \mathcal{A} & \mathcal{A}G\overline{g}G^*\mathcal{A} \\
              \end{array}\Big],
\end{equation}
where $\mathscr{D}(\mathbb{A})$ consists of elements $(x,y)\in
\mathscr{D}(\mathcal{A}^{1/2})\times\mathscr{D}(\mathcal{A}^{1/2})$
possessing the property
$$\mathcal{A}(x+G\overline{g}(G^*\mathcal{A}y))\in V^{^{\prime }}.$$
This structure of problem \eqref{abs_plate1} leads the following
definitions of strong and generalized solutions in this setting.

\begin{definition}
A function $u(t)$ is said to be \textbf{a strong solution} to (\ref%
{abs_plate1}) on a semi-interval $[0,T),$ iff

\begin{itemize}
\item $u(t)\in C([0,T);\mathscr{D}(\mathcal{A}^{1/2}))\cap C^{1}([0,T);V)$

\item $u\in W^{1,1}(a,b;\mathscr{D}(\mathcal{A}^{1/2}))$ and $u_{t}\in
W^{1,1}(a,b;V)$ for any $0<a<b<T$

\item $\mathcal{A[}u(t)+G\overline{g}(G^{\ast }\mathcal{A}u_{t}(t))]\in
V^{^{\prime }}$ for almost all $t\in \lbrack 0,T]$

\item Equation \eqref{abs_plate1} is satisfied in $V^{^{\prime }}$ for almost all $t\in
\lbrack 0,T]$

\item The initial condition in \eqref{abs_plate1} holds.
\end{itemize}
The function $u(t)$ is a strong solution on $[0,T]$ if, in
addition, we have that $(u(t),u_t(t))$ is continuous at $t=T$. A
function $u(t)$ is said to be \textbf{a generalized solution} to
\eqref{abs_plate1} on $[0,T]$, iff

\begin{itemize}
\item $u(t)\in C(0,T;\mathscr{D}(\mathcal{A}^{1/2}))\cap C^{1}(0,T;V)$

\item The initial condition in \eqref{abs_plate1} holds.

\item There exists sequences of strong solutions $\{u_{n}(t)\}$ to problem (%
\ref{abs_plate1}) defined on $[0,T]$ with initial data $%
(u_{0n},u_{1n})$ such that%
\begin{equation*}
\lim_{n \to \infty} ~\max_{t \in [0,T]}
\left\{ \left\Vert u_{n{t}}(t)-u_{t}(t)\right\Vert _{V}+\left\Vert \mathcal{%
A}^{1/2}(u_{n}(t)-u(t))\right\Vert _{\mathcal{H}}\right\} =0
\end{equation*}%
The function $u(t)$ is the generalized solution on a semi-interval
$[0,T),$ if $u(t)$ is a generalized solution on each subinterval
$[0,T^{^{\prime }}]\subset \lbrack 0,T).$
\end{itemize}
\end{definition}

The following theorem provides the existence and uniqueness of
local and global solutions. This is the primary result of Section
2.4 in \cite{springer}.

\begin{theorem}
\label{abs_wp} Under Assumption \ref{ass1}, the following hold:

\begin{itemize}
\item \textbf{Local strong solutions:} For every $u_{0},u_{1}\in \mathscr{D}(%
\mathcal{A}^{1/2})$ such that $\mathcal{A[}u_{0}+G\overline{g}(G^{\ast }%
\mathcal{A}u_{1})]\in V^{^{\prime }}$ there exists $t_{\max }>0$
and unique
strong solution such that $(u,u_{t})\in C([0,t_{\max });\mathscr{D}(\mathcal{%
A}^{1/2})\times V).$

\item \textbf{Local generalized solutions: }If $(u_{0},u_{1})\in \overline{%
\mathscr{D}(\mathbb{A})}\subseteq
\mathscr{D}(\mathcal{A}^{1/2})\times V$, where $\overline{%
\mathscr{D}(\mathbb{A})}$ is the closure of
$\mathscr{D}(\mathbb{A})$ in $\mathscr{D}(\mathcal{A}^{1/2})\times
V$.
Then there exists $t_{\max }>0$ and unique generalized solution such that $%
(u,u_{t})\in C\left([0,t_{\max
});\mathscr{D}(\mathcal{A}^{1/2})\times V\right).$

\item \textbf{Global solutions}: If, in addition, strong (or generalized)
solutions satisfy%
\begin{equation*}
\underset{t\in \lbrack 0,t_{\ast }]}{\sup }\left\{ \left\Vert \mathcal{A}%
^{1/2}u(t)\right\Vert _{\mathcal{H}}+\left\Vert
u_{t}(t)\right\Vert _{V}\right\} \leq M(t_{\ast },u_{0},u_{1})
\end{equation*}%
for every existence semi-interval $[0,t_{\ast }]$, then the local
solutions referred to above are global, which is to say $t_{\max
}=\infty .$
\end{itemize}
\end{theorem}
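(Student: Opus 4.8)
The plan is to recognize \eqref{abs_plate1} as an abstract Cauchy problem governed by a maximal monotone operator perturbed by a locally Lipschitz term, and to apply the theory of nonlinear semigroups together with a fixed-point continuation argument. First I would fix the finite-energy phase space $Y \equiv \cD(\cA^{1/2}) \times \cH$ equipped with the energy inner product $\langle (x_1,y_1),(x_2,y_2)\rangle_Y = \langle \cA^{1/2}x_1, \cA^{1/2}x_2\rangle_{\cH} + \langle y_1, y_2\rangle_{\cH}$. Rewriting the second-order equation in first-order form identifies the generator with $-\mathbb{A}$, where $\mathbb{A}$ is as in \eqref{Atild}, so the bulk of the work is to show that $\mathbb{A}$ is maximal monotone on $Y$.

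Monotonicity is a direct computation: for $(x,y)\in\cD(\mathbb{A})$ one has $\mathbb{A}(x,y)=(-y,\,\cA x+\cA G\overline{g}(G^{\ast}\cA y))$, so that
\[
\langle \mathbb{A}(x,y),(x,y)\rangle_Y = -\langle \cA^{1/2}y,\cA^{1/2}x\rangle_{\cH}+\langle \cA^{1/2}x,\cA^{1/2}y\rangle_{\cH}+\langle \cA G\overline{g}(G^{\ast}\cA y),y\rangle.
\]
The first two (skew-symmetric) terms cancel, and using the boundedness of $\cA^{1/2}G$ together with the adjoint identity $(\cA^{1/2}G)^{\ast}=G^{\ast}\cA^{1/2}$, the remaining term collapses to the duality pairing $\langle \overline{g}(G^{\ast}\cA y),\,G^{\ast}\cA y\rangle_{U_0',U_0}$, which is nonnegative by the monotonicity hypothesis on $\overline{g}$ (with $\overline{g}(0)=0$). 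Maximality amounts to solving the resolvent equation $(I+\mathbb{A})(x,y)=(f,g)$ for arbitrary $(f,g)\in Y$; eliminating $x=f+y$ reduces this to the stationary problem $y+\cA y+\cA G\overline{g}(G^{\ast}\cA y)=g-\cA f$ in the single unknown $y$. The left-hand operator is the sum of the linear maximal monotone operator $I+\cA$ and the monotone, hemicontinuous boundary term $\cA G\overline{g}(G^{\ast}\cA\,\cdot)$, so I would invoke the surjectivity theorem for maximal monotone operators (Minty--Browder), regularizing $\overline{g}$ by a Yosida approximation if needed to control the unbounded composition.

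With $\mathbb{A}$ maximal monotone, the Komura--Kato theory yields a strongly continuous contraction semigroup on $\overline{\cD(\mathbb{A})}$ solving the unperturbed problem and producing strong solutions for data in $\cD(\mathbb{A})$. I would then reinstate the perturbation $F$ via the variation-of-parameters (Duhamel) formulation and a contraction-mapping argument on a short interval: since $F:\cD(\cA^{1/2})\times V\to V'$ is locally Lipschitz, one obtains a unique local strong solution on a maximal interval $[0,t_{\max})$, the $V$-topology on the velocity component being exactly what the Lipschitz bound and the mapping into $V'$ require. Generalized solutions then follow for data in $\overline{\cD(\mathbb{A})}$ by approximating the initial data by elements of $\cD(\mathbb{A})$ and passing to the limit, the solution map being Lipschitz in the energy norm on bounded sets; this is precisely the content of the generalized-solution definition. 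Finally, the a priori bound in the ``global solutions'' clause prevents the energy norm from blowing up on any finite interval, so the standard continuation alternative forces $t_{\max}=\infty$.

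The main obstacle is the maximality step---solving the resolvent equation---because the damping enters through the unbounded composition $\cA G\overline{g}(G^{\ast}\cA\,\cdot)$ rather than as a bounded monotone perturbation; one must work in the Gelfand triple $U_0\subseteq U\subseteq U_0'$ and verify the coercivity and demicontinuity needed for Minty--Browder, taking care that the Green's map $G$ and the Nemytskij operator $\overline{g}$ interact so that the composite remains maximal monotone on $Y$. The perturbation and global-existence steps, by contrast, are comparatively routine once the energy identity and the local Lipschitz structure of $F$ are in hand.
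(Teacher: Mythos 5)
The paper gives no proof of Theorem \ref{abs_wp}: it is quoted directly from \cite{springer} (Section 2.4), and your sketch reconstructs essentially the argument given there---maximal monotonicity of $\mathbb{A}$ via cancellation of the skew-symmetric terms plus monotonicity of $\overline{g}$, Minty--Browder surjectivity for the resolvent equation $y+\mathcal{A}y+\mathcal{A}G\overline{g}(G^{\ast}\mathcal{A}y)=g-\mathcal{A}f$, nonlinear semigroup generation, a contraction-mapping treatment of the locally Lipschitz perturbation $F$, density of $\mathscr{D}(\mathbb{A})$ for generalized solutions, and the standard continuation alternative for globality. The only caveat is terminological: the semigroup generated by $-\mathbb{A}$ is nonlinear, so there is no literal Duhamel formula; the fixed-point step must instead be run on the solution map of the inhomogeneous problem $u'+\mathbb{A}u\ni f$, which is Lipschitz in $f$ by monotonicity---and this is what your contraction argument in effect uses.
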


\section{Berger Model with (FCD) Conditions}\label{freeBC}

\subsection{Well-posedness---Proof of Theorem \protect\ref{mainresult0}}
We consider problem (\ref{plate}) with (FCD) boundary conditions.
The proof of Theorem \ref{mainresult0} is based
 on the application of the abstract setup above and Theorem
 \ref{abs_wp}.

\subsubsection{Application of Abstract Setup and Local Well-posedness}
Firstly, we need to modify the abstract model given above to fit
our case. For the functional setup, we introduce the following
spaces and operators:

\begin{itemize}
\item $\mathcal{H=}V\mathcal{\equiv }L^{2}(\Omega ),$ \ \ \ $U\equiv
L^{2}(\Gamma _{1})\times L^{2}(\Gamma _{1}),$

\item $U_{0}\equiv H^{1/2}(\Gamma _{1})\times H^{3/2}(\Gamma _{1}),$ \ \ \ $%
U_{0}^{^{\prime }}=$\ $H^{-1/2}(\Gamma _{1})\times H^{-3/2}(\Gamma
_{1}),$

\item $\mathcal{A}u\equiv \Delta ^{2}u,$ $u\in \mathscr{D}(\mathcal{A}),$
where%
\begin{equation*}
\mathscr{D}(\mathcal{A})\equiv \left\{ u\in H^{4}(\Omega ):%
\begin{array}{c}
u=\nabla u=0\text{ on }\Gamma _{0} \\
\Dn\Delta u+(1-\mu )B_{2}u-\mu_{1}u=0\text{ on }%
\Gamma _{1} \\
\Delta u+(1-\mu )B_{1}u=0\text{ on }\Gamma _{1}%
\end{array}%
\right\}
\end{equation*}

\item $V^{^{\prime }}=L^{2}(\Omega )$ and $\mathscr{D}(\mathcal{A}%
^{1/2})=H_{\Gamma _{0}}^{2}(\Omega )$ which is given in Section 1.3. %

\item $F(u,v)=-f_B(u)+p$, ~with ~$f_B(u)=(\gamma -\left\Vert \nabla u\right\Vert
^{2})\Delta u,$

\item $G:U_{0}^{^{\prime }}\rightarrow \mathcal{H}$ , $G(g_{1},g_{2})\equiv
G_{1}(g_{1})-G_{2}(g_{2})$ where $G_{i},$ $i=1,2$, are defined by%
\begin{equation*}
G_{1}(v)\equiv u\text{ iff }\left\{
\begin{array}{c}
\Delta ^{2}u=0\text{ in }\Omega ,\text{ }u=\nabla u=0\text{ on
}\Gamma _{0}
\\
\Delta u+(1-\mu )B_{1}u=v\text{ on }\Gamma _{1} \\
\Dn\Delta u+(1-\mu )B_{2}u=\mu_{1}u\text{ on }\Gamma
_{1},%
\end{array}%
\right.
\end{equation*}%
and%
\begin{equation*}
G_{2}(v)\equiv u\text{ iff }\left\{
\begin{array}{c}
\Delta ^{2}u=0\text{ in }\Omega ,\text{ }u=\nabla u=0\text{ on
}\Gamma _{0}
\\
\Delta u+(1-\mu )B_{1}u=0\text{ on }\Gamma _{1} \\
\Dn\Delta u+(1-\mu )B_{2}u=\mu_{1}u+v\text{ on }%
\Gamma _{1}.%
\end{array}%
\right.
\end{equation*}

\item The mapping $\overline{g}:U_{0}\rightarrow U_{0}^{^{\prime }}$ has the form%
\begin{equation*}
\overline{g}:(v_{1},v_{2})\rightarrow (g_{1}(v_{1}),
g_{2}(v_{2})),\text{ \ \ \ }(v_{1},v_{2})\in U_{0}
\end{equation*}%

\end{itemize}
\noindent With the above notation, taking into account that in our case: $g_{1}=0$ and $%
g_{2}(v)=|v|^6 v$,   $G(g_{1},g_{2})\equiv
-G_{2}(g_{2})$. We reduce the equation in (\ref{plate}) to the following form%
\begin{equation*}
u_{tt}+\mathcal{A}\left[u-G_{2}\left(\left[|u_{t}|^6 u_t\right]|_{\Gamma
_{1}}\right)\right]=F(u(t),u_{t}(t)),\text{ \ \ \ \ \ }t>0,
\end{equation*}%
or equivalently,
\begin{equation}
u_{tt}+\mathcal{A[}u+G\overline{g}(G^*\mathcal{A}u_t
)]=F(u(t),u_{t}(t)),\text{ \ \ \ \ \ }t>0.  \label{4.4}
\end{equation}%
 Now, before giving the well-posedness result for
(\ref{4.4}), we verify that the conditions in Assumption
\ref{ass1} are satisfied.

\begin{lemma}
\label{oper} With reference to (\ref{4.4}), operators
$\mathcal{A},G$ and $F$ introduced above comply with the
requirements of Assumption \ref{ass1}.
\end{lemma}

\begin{proof}
By definition of $\mathcal{A}$ as above, \ $\mathcal{A}$ is
closed, linear, positive, self-adjoint, densely defined on $\mathcal{H}$ with $\cD(\mathcal{A}%
^{1/2})=H_{\Gamma _{0}}^{2}(\Omega )$, and we have dense,
continuous injections:%
\begin{equation*}
\cD(\mathcal{A}^{1/2})\subset V=\mathcal{H}=V^{^{\prime }}\subset \cD(\mathcal{A}%
^{1/2})^{^{\prime }}=[H^{2}_{\Gamma_0}(\Omega)]'.
\end{equation*}%
The Nemytskij operator%
\begin{equation*}
(g_{1}, g_{2})=(0,g_{2}):U_{0}\rightarrow U_{0}^{^{\prime }}
\end{equation*}%
is monotone \cite{springer,showalter} and continuous, since
$H^{3/2}(\Gamma_1)\subset C(\Gamma_1)$ by the Sobolev embeddings.

\noindent The requirement on $G$ in Assumption \ref{ass1} reduces
the fact that
\begin{equation}
G^{\ast }\mathcal{A}^{1/2}:\mathcal{H}\rightarrow U_{0}\equiv
H^{1/2}(\Gamma _{1})\times H^{3/2}(\Gamma _{1})\label{opG}
\end{equation}
is bounded. An application of Green's theorem with the
calculations given in \cite[Section 3.2.2]{springer} yield that
the
mappings%
\begin{equation*}
G_{1}:L^{2}(\Gamma _{1})\rightarrow \mathscr{D}(\mathcal{A}^{5/8-\epsilon }),%
\text{ \ \ }G_{2}:L^{2}(\Gamma _{1})\rightarrow \mathscr{D}(\mathcal{A}%
^{7/8-\epsilon })
\end{equation*}%
\begin{equation*}
G_{1}^{\ast }\mathcal{A}:\mathscr{D}(\mathcal{A}^{1/2})\rightarrow
H^{1/2}(\Gamma _{1}),\text{ \ \ }G_{2}^{\ast }\mathcal{A}:\mathscr{D}(%
\mathcal{A}^{1/2})\rightarrow H^{3/2}(\Gamma _{1})\text{\ \ \ }
\end{equation*}%
are bounded and for every $u\in
\mathscr{D}(\mathcal{A}^{3/8+\epsilon
})\subset H^{3/2+4\epsilon }(\Omega )$.\ We then have that%
\begin{equation*}
G^{\ast }\mathcal{A}u=(G_{1}^{\ast }\mathcal{A}u, -G_{2}^{\ast }\mathcal{A}%
u)=(\Dn u|_{\Gamma _{1}}, u|_{\Gamma _{1}}).
\end{equation*}%
By the last equality, \eqref{opG} translates into the fact that
\begin{equation*}
u\rightarrow G^{\ast }\mathcal{A}^{1/2}u=(\Dn
\mathcal{A}^{-{1/2}}u, \mathcal{A}^{-{1/2}}u)
\end{equation*}
is bounded from $L^2(\Omega)$ into $H^{1/2}(\Gamma _{1})\times
H^{3/2}(\Gamma _{1})$. This follows from the identification
$\cD(\mathcal{A}^{1/2})=H_{\Gamma _{0}}^{2}(\Omega )$, and the
trace theorem, which imply that the maps
\begin{equation*}
\Dn(\cdot):H^2(\Omega)\rightarrow H^{1/2}(\Gamma),\text{ \ \
}(\cdot)|_{\Gamma}:H^2(\Omega)\rightarrow H^{3/2}(\Gamma)
\end{equation*}
are bounded. The  validity of requirement on $G$ is thus observed.

 Now, we need to verify that the operator
$F:\cD(\mathcal{A}^{1/2})\times V\rightarrow V{^{\prime }}(=V)$,
defined by $$F(u,v)=-f_B(u)+p,$$ is locally Lipschitz. Let $u,w \in
\cD(\mathcal{A}%
^{1/2})$ with~
$||A^{1/2}u||_{\mathcal{H}}, ||A^{1/2}w||_{\mathcal{H}}\leq R$,~
where the bilinear form $a(\cdot,\cdot)$ plays a special role for the norm
in $\mathscr H$ under (FCD) and is equivalent to $||\Delta
\cdot||_0$ for (HD)\footnote{For (HD), this follows from
the well-posedness of the biharmonic problem associated to the
these boundary conditions.}:
\begin{align*}
||f_B(u)-f_B(w)||_V \le
C||u-w||_{\cD(\cA^{1/2})}+\big|\big|~||\nabla u||^2\Delta u -
||\nabla w||^2\Delta w\big|\big|.
\end{align*}
Now,
\begin{align*}
\big|\big|~||\nabla u||^2\Delta u - ||\nabla w||^2\Delta
w\big|\big| \le & \big|\big|~||\nabla u||^2\Delta u-||\nabla
u||^2\Delta w+||\nabla u||^2\Delta w - ||\nabla
w||^2\Delta w\big|\big|\\[.1cm]
\le & \big|\big|~||\nabla u||^2\Delta(u-w)+\Delta w(||\nabla u||^2-||\nabla w||^2)\big|\big|\\[.1cm]
\le &  ~C||u||_{1,\Omega}^2||u-w||_{2,\Omega}+C||w||_{2,\Omega}\big(||\nabla u||+||\nabla w||\big)\big(||\nabla u||-||\nabla w||\big)\\[.1cm]
\le & ~C||u||_{1,\Omega}^2||u-w||_{2,\Omega}+C||w||_{2,\Omega}\big(||\nabla u||+||\nabla w||\big)\big(||u-w||_{1,\Omega}\big)\\[.1cm]
\le & ~C\left(R \right)||u-w||_{\cD(A^{1/2})},
\end{align*}
where we have freely used the Poincare's inequality, the equivalence $H^2(\Omega) \equiv \mathscr D(A^{1/2})$, and Sobolev
embeddings. The locally Lipschitz property for $F$ is thus
observed.

Moreover, for initial data $(u_0,u_1)\in \mathscr{H}$, since the
set
\begin{equation*}
\mathcal{L}=(H_{\Gamma _{0}}^{2}\cap H^4)(\Omega )\times
C^{\infty}_0(\Omega)
\end{equation*}
is dense in $\mathscr{H}$ and belongs to the domain of
corresponding operator $\mathbb{A}$ given in \eqref{Atild}, the
application of Theorem \ref{abs_wp} gives the local (in time)
existence and uniqueness of generalized solutions.
\end{proof}

\noindent\textbf{\textit{A Priori Bound and Global
Well-posedness---Completion of proof of Theorem
\ref{mainresult0}}}

\vspace{0.25cm} \noindent Having obtained local generalized
solutions, our aim is to obtain {\em global a priori estimates}
for these solutions. We note that our computations are first
performed on strong solutions, and then extended via density to
$\mathscr{H}$ in the limit. Our starting point is the energy
relation (\ref{e-ineq}), which follows via integration by parts
for strong solutions and remains valid for generalized solutions.
It can easily be shown \cite{ch-0} that there exists positive
constants $c_0,c_1$ and $C$ such that
\begin{equation}
c_0\mathscr{E}(u,u_t)-C\leq E(u,u_t)\leq c_1\mathscr{E}(u,u_t)+C
\label{energy}
\end{equation}
for all generalized solutions $(u,u_t)\in \mathscr{H}$.\footnote{For more details, see the
discussion preceding \eqref{nonen}.}
Now, using (\ref{energy}) in the energy identity (\ref{e-ineq}), we have%
\begin{equation*}
E(t)+\int\limits_{0}^{t}\int\limits_{\Gamma _{1}}|u_{t}|^8
+\int\limits_{0}^{t}\int\limits_{\Gamma _{1}}(\gamma -\left\Vert
\nabla u\right\Vert ^{2})\left(\partial _{\nu }u\right)u_{t} \leq
E(0)+C
\end{equation*}%

Dealing with the non-dissipative term above requires some
 calculation. By the H\"{o}lder inequality we have
\begin{align*}
\left|\int\limits_{0}^{t}\int\limits_{\Gamma _{1}}(\gamma
-\left\Vert \nabla u\right\Vert ^{2})(\partial _{\nu
}u)(u_{t})\right| \leq&~ C\left\{ \int\limits_{0}^{t}\gamma
\left\Vert \Dn u\right\Vert _{L^2(\Gamma_1)}\left\Vert
u_{t}\right\Vert _{L^{2}(\Gamma _{1})}
+\int\limits_{0}^{t}\left\Vert \nabla u\right\Vert ^{2}\left\Vert
\Dn u\right\Vert _{L^2(\Gamma_1)}\left\Vert u_{t}\right\Vert
_{L^{2}(\Gamma _{1})}\right\}.
\end{align*}
Now, applying the {\em trace moment inequality} for $\Dn u$, \cite[Theorem 1.6.6]{tracemoment} we
have
\begin{align*}
\left|\int\limits_{0}^{t}\int\limits_{\Gamma _{1}}(\gamma
-\left\Vert \nabla u\right\Vert ^{2})(\partial _{\nu
}u)u_{t}\right| \leq C \left\{\int\limits_{0}^{t} \gamma\left\Vert
 u\right\Vert_2 ^{1/2}\left\Vert \nabla u\right\Vert
^{1/2}\left\Vert u_{t}\right\Vert _{L^{2}(\Gamma _{1})}
+\int\limits_{0}^{t}\left\Vert  u\right\Vert_2
^{1/2}\left\Vert \nabla u\right\Vert ^{5/2}\left\Vert
u_{t}\right\Vert _{L^{2}(\Gamma _{1})}\right\}.
\end{align*}
We note the equivalence of ~$a(\cdot,\cdot)$ ~and ~$||\cdot||_2$. We then use Young's inequality for the first integral on the RHS,
with powers 2, 4, 8, and 8, so ~~$(1/2)+(1/4)+(1/8)+(1/8)=1.$  For
the second integral of RHS, with powers 4, 8/5, 8, where
~~$(1/4)+(5/8)+(1/8)=1,$~ we obtain
\begin{equation*}
\left|\int\limits_{0}^{t}\int\limits_{\Gamma _{1}}(\gamma
-\left\Vert \nabla u\right\Vert ^{2})(\partial _{\nu}u
)u_{t}\right|\leq
C_{\epsilon}\gamma^2t+C_{\epsilon}\int\limits_{0}^{t}E(s)ds+\epsilon
\int\limits_{0}^{t}\left\Vert u_{t}\right\Vert _{L^{2}(\Gamma
_{1})}^{8}.
\end{equation*}
This yields

\begin{equation*}
E(t)\leq C(\gamma,t,E(0))+C\int\limits_{0}^{t}E(s)ds.
\end{equation*}%
An application of Gronwall's inequality gives:
\begin{equation*}
E(t)\leq C(T)\text{, \ \ \ \ for all }t\in \lbrack 0,T].
\end{equation*}%
This implies the global existence of strong (and thus generalized)
solutions to equation (\ref{4.4}). Since these solutions also solve the original problem---(%
\ref{plate}) with (FCD) boundary conditions---the proof of Theorem \ref%
{mainresult0} is now completed.

\begin{remark}
At the present time, we do not see how the global well-posedness
of solutions can be established in the absence of such highly
nonlinear damping. As discussed above, this is an indication that
the dynamical systems properties of solutions does not follow
readily from the equations via standard techniques used at the
finite-energy level. Without highly nonlinear damping active on
the boundary, we may not have control of the non-dissipative term
$$\displaystyle \int_0^t(\gamma-||\nabla
u||^2)\int_{\Gamma_1}(\partial_{\nu} u) u_t ~d\Gamma dt.$$ Additionally, since we have no control over $E(t)$ as ~$T\to +\infty$, long-time behavior analysis is not viable.
\end{remark}

\section{Berger Model with (HD) Conditions}
In this section we consider problem (\ref{plate}) with (HD)
boundary conditions.
\subsection{Well-posedness---Proof of Theorem \protect\ref{wellpHD}}
Similar to Section \ref{freeBC}, the proof of Theorem
\ref{wellpHD} is based on the abstract setup given in Section
\ref{abplate} and the application of Theorem \ref{abs_wp}. We
adapt the abstract model (\ref{abs_plate1}) to this case.

\subsubsection{Application of Abstract Setup and Verification of Hypotheses}
We introduce the following spaces and operators:

\begin{itemize}
\item $\mathcal{H=}L^{2}(\Omega ),$ \ \ \ $U\equiv
L^{2}(\Gamma)$

\item $U_{0}\equiv H^{1/2}(\Gamma)$ \ \ \ $%
U_{0}^{^{\prime }}=$\ $H^{-1/2}(\Gamma)$

\item $\mathcal{A}u\equiv \Delta ^{2}u,$ $u\in \mathscr{D}(\mathcal{A}),$
where%
\begin{equation*}
\mathscr{D}(\mathcal{A})\equiv \left\{ u\text{ in } (H^{4}\cap H_{0}^{1})(\Omega ):%
\begin{array}{c}
\Delta u=0\text{ on } \Gamma
\end{array}%
\right\}
\end{equation*}

\item $V^{^{\prime }}=V=L^{2}(\Omega )$ and $\mathscr{D}(\mathcal{A}%
^{1/2})=(H_{0}^{1}\cap H^{2})(\Omega )$.

\item $F(u,v)=-f_B(u)+p$ where $f_B(u)=(\gamma -\left\Vert \nabla u\right\Vert
^{2})\Delta u.$

\item $G:L^{2}(\Gamma)\rightarrow L^{2}(\Omega )$ denotes a biharmonic extension of the boundary values defined on $\Gamma$. That is:
\begin{equation*}
G(v)\equiv u\text{ iff }\left\{
\begin{array}{c}
\Delta ^{2}u=0\text{ in }\Omega,
\\
u=0\text{ on
}\Gamma, \\
\Delta u=v \text{ on }\Gamma.%
\end{array}%
\right.
\end{equation*}%

\item The mapping $\overline{g}:U_{0}\rightarrow U_{0}^{^{\prime }}$
is determined by the function $D(\cdot)$ according to the formula
\begin{equation*}
(\overline{g}(v),w)_{U_0,U_{0}^{^{\prime }}}=\int_{\Gamma}
D(v)wd\Gamma ~~~~~ v,w\in U_0.
\end{equation*}%
\end{itemize}

We note that the application of Green's formula gives that
$G^{\ast}\mathcal{A}=\frac{\partial}{\partial \nu}\big|_{\Gamma}$,
and hence
$\ker[G^{\ast}\mathcal{A}]\cap \mathscr{D}(\mathcal{A}%
^{1/2})$ contains $H_0^2(\Omega)$ and is dense in $L^2(\Omega)$.
This implies that for all initial data $(u_0,u_1)\in \mathscr{H}$
the set $\mathscr{D}(\mathcal{A})\times (\ker[G^{\ast}\mathcal{A}]\cap \mathscr{D}(\mathcal{A}%
^{1/2}))$ is dense in $\mathscr{H}$ and belongs to the domain of
the corresponding operator $\mathbb A$.

In line with the above setup, the verification of the requirements
of Assumption \ref{ass1} follows as  the case of (FCD) damping.
For more details the reader is referred to \cite[Section
4.2]{springer}. The proof of Theorem \ref{wellpHD} is obtained by
the application of Theorem \ref{abs_wp} via the density argument
above.

\subsection{Long Time Behavior Under (HD)---Proof of Theorem \protect\ref{gatt}}
This section is devoted to study the long time behavior of
dynamical system generated by the solution of \eqref{plate} taken
with (HD) boundary conditions. Our principal goal is to prove
Theorem \protect\ref{gatt}, the existence of a compact global
attractor for the dynamics.

We utilize the notations consistent with the case of (FCD), but
now specified to the case of (HD): $\displaystyle
E(t)=\frac{1}{2}\big\{ ||u_{t}||^{2}+||\Delta u||^2\big\}$, as
well as the identical definitions for $\widehat{E}$ and $\mathscr
E$ from before.

As in the proof of \cite[Lemma 1.5.4]{springer}, for all $(u,u_t)\in \mathscr{H}=(H^{2}\cap H_{0}^{1})(\Omega )\times L^2(\Omega)$, and any $%
\epsilon >0$, it is straightforward to show the {potential energy
bound} (as we will refer to it below):
\begin{equation}
\left|\frac{\gamma}{2}||\nabla u(t)||^{2}+(p,u)\right| \leq \epsilon \left[ ||\Delta u(t)||^{2}+%
\frac{1}{2}||\nabla u(t)||^{4}\right] +M(\epsilon,\gamma,p).
\label{pibound}
\end{equation}%
\begin{remark}
In fact, as in \cite{springer}, the key bound which implies 
\eqref{pibound} is
\begin{equation}\label{lot}
||u||_{2-\eta}^2 \le \epsilon \left[ ||\Delta u(t)||^{2}+%
\frac{1}{2}||\nabla u(t)||^{4}\right]
+M(\epsilon),~~\eta,\epsilon>0.
\end{equation}
\end{remark}
\noindent This also yields the energetic bounds
\begin{equation}
c_0\widehat{E}(u,u_t)-C\leq \mathscr{E}(u,u_t)\leq
c_1\widehat{E}(u,u_t)+C, \label{nonen}
\end{equation}%
for some $c_0,c_1,C>0$ depending on $p$ and $\gamma. $
Accordingly, we introduce  more notation for the study of
long-time behavior (following \cite{c-e-l}):
\begin{equation}
0 \le \mathscr E_M(t) \equiv \mathscr E(t)+M,\label{not1}
\end{equation}
where $M= M(\epsilon,\gamma,p)$ is the constant given in
\eqref{pibound}.

\subsubsection{Overview of the Proof}
Proof of Theorem \ref{gatt} (which gives the existence of a
compact global attractor for $(S_H,\mathscr H)$) follows from the
application of the abstract Theorem \ref{0} (in the Appendix).
Utilizing Theorem \ref{0} requires showing that the dynamical
system generated by solutions to \eqref{plate} taken with (HD)
conditions is {\em dissipative} (i.e., possess a bounded absorbing
set), and has the {\em asymptotic smoothness} property. Thus, to
prove Theorem \ref{gatt}, we will first present the dissipativity
analysis of \eqref{plate} in Section \ref{sec:abs}. We will
utilize energy methods to show the existence of the absorbing ball
that attracts every trajectory after a certain time. This will be
given as Theorem \ref{abs-ball}. Then, in Theorem \ref{asympt},
the asymptotic smoothness property of the dynamical system is
given. Its proof follows through several steps in Section
\ref{smoothnesssec}. With these two results, the existence of a
compact global attractor for $(S_H,\mathscr H)$ will be
demonstrated.
\begin{remark} We will show an explicit
bound on the size of the absorbing ball. We note that this is in
contrast to the approach taken in \cite{springer} for the
non-rotational von Karman plate with hinged-type boundary
dissipation; there, the dynamics (under additional assumptions,
including requisite frictional damping in the interior) are shown to
be gradient, and thus the existence of a compact global attractor
(roughly) follows from the asymptotic smoothness property (see
Theorem \ref{gradsmooth} in the Appendix). Also see the previous
discussion following Theorem \ref{gatt}. \end{remark}

\subsubsection{Explicit Estimates on Absorbing Ball}\label{sec:abs}
In this section, as the first step of the proof of Theorem
\ref{gatt}, we show the dissipativity of the dynamical system
corresponding to generalized solutions of \eqref{plate} with (HD)
boundary conditions. To this end, we give an explicit estimate on
the absorbing ball for which we utilize multiplier techniques and
sharp trace results for the linear plate equation with a given RHS
\cite{sharptrace}. We will show the following theorem:

\begin{theorem}
\label{abs-ball} Let Assumption \ref{ass2} and \ref{geom} hold.
Then there exists an absorbing set $\mathscr{B}\subset
\mathscr{H}$ for generalized solutions to (\ref{plate}) taken with
(HD) boundary conditions. This is to say: for all $R_0>0$ and
initial data $(u_0,u_1)\in \mathscr{H}$ with
$||(u_0,u_1)||_{\mathscr{H}}\leq R_0$, there exists a $t_0
=t(R_0)$ such that $(u(t),u_t(t))\in \mathscr{B}$ for $t\geq t_0.$
\end{theorem}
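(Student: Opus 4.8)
The plan is to establish the existence of an absorbing ball by deriving an energy inequality of the form
\begin{equation*}
\widehat E(t) \le C_1 \widehat E(s) e^{-\omega(t-s)} + C_2
\end{equation*}
(or the corresponding differential inequality) with constants $C_1,C_2,\omega>0$ independent of the initial data, from which the absorbing property follows immediately by choosing $t_0=t(R_0)$ large enough that $C_1\widehat E(s)e^{-\omega(t-s)}\le C_2$. Because the energy equality \eqref{enh} shows $\mathscr E(t)$ is nonincreasing (the damping term $\langle D(\partial_\nu u_t),\partial_\nu u_t\rangle_\Gamma\ge 0$ since $D$ is monotone with $D(0)=0$), the task is really to upgrade monotonicity to a genuine \emph{uniform} decay-up-to-a-constant estimate. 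First I would work with strong solutions (extending to generalized solutions by density at the end) and test the equation \eqref{plate} against the standard combination of multipliers: the multiplier $u$ itself to recover the potential energy, and the flux multiplier $\phi = h\cdot\nabla u$ (with $h(\mathbf x)=\mathbf x-\mathbf x_0$ from Assumption \ref{geom}) to generate a multiple of the total energy together with controllable boundary and lower-order terms. The identity \eqref{hingedbiharmonic} will govern how the biharmonic operator distributes onto these multipliers, and the Berger nonlinearity must be handled via the structure exhibited in \eqref{bergfreeterm}--\eqref{pi}, exploiting that $(f_B(u),u_t)$ is a perfect time derivative plus a boundary term (which under (HD), where $u=0$ on $\Gamma$, the troublesome free-boundary term is \emph{absent}).

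Next I would assemble these identities into an estimate for $\frac{d}{dt}$ of a perturbed energy functional $\widehat E + \epsilon\,\Psi$, where $\Psi$ collects the multiplier contributions (e.g.\ $(u_t,u)_\Omega$ and $(u_t,h\cdot\nabla u)_\Omega$). The flux multiplier $h\cdot\nabla u$ produces, via the divergence terms in the $h$-identity, a term proportional to the full energy (this is where the star-shaped Assumption \ref{geom} enters: the boundary integral $\int_\Gamma (h\cdot\nu)(\cdots)$ carries a favorable sign), enabling the crucial step of bounding the energy from above by its own time derivative plus damping and lower-order terms. The lower-order terms (the $\gamma$ and pressure contributions, and the $2-\eta$ norm) are absorbed using the potential energy bound \eqref{pibound} and \eqref{lot}, and the energetic equivalence \eqref{nonen} lets me pass freely between $\mathscr E$, $\widehat E$, and $E$. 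Throughout, Assumption \ref{ass2} provides the two-sided linear bound on $D$ away from the origin, which is what makes the damping integrals $\int_\Gamma D(\partial_\nu u_t)\partial_\nu u_t$ controllable from above and below by $\|\partial_\nu u_t\|^2_{L^2(\Gamma)}$ (modulo a constant near $|s|<1$), so the dissipation absorbs the velocity traces generated by the multipliers.

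The main obstacle I anticipate is controlling the higher-order boundary trace $\partial_\nu(\Delta u)$ (and the second-order trace of $u$) that appears when the flux multiplier $h\cdot\nabla u$ is paired against $\Delta^2 u$ through \eqref{hingedbiharmonic}. These traces are \emph{not} bounded by the finite energy $\|\Delta u\|^2$, and the naive estimate would require $H^{5/2}$-regularity on the boundary, which is unavailable at the energy level. This is precisely where the sharp trace results of \cite{sharptrace} are indispensable: they allow the second-order boundary traces associated with the linear Euler--Bernoulli dynamics to be estimated by interior energy norms plus the damping, \emph{without} a geometric condition for the linear part. I would therefore isolate the linear plate-with-RHS problem (treating the Berger term and pressure as a forcing $F \in L^2$), apply the trace estimate from \cite{sharptrace} to dominate $\partial_\nu(\Delta u)|_\Gamma$, and reserve the star-shaped assumption only for the genuinely nonlinear boundary contribution arising from the interaction of the Berger term with $h\cdot\nabla u$, as flagged in Remark \ref{star}. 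Handling the nonlinear damping $D(\partial_\nu u_t)$ when it is ``pitted against'' these high-order traces (the boxed difficulty in Section \ref{tech}) will require the careful splitting into the regions $|\partial_\nu u_t|\ge 1$ and $|\partial_\nu u_t|<1$ dictated by Assumption \ref{ass2}, together with Young's inequality to absorb the small-$\epsilon$ energy terms back into the left-hand side. Once the differential inequality is in hand, Gronwall (or direct integration) yields the exponential-plus-constant bound and the explicit radius of $\mathscr B$.
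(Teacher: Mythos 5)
Your overall toolkit (the multipliers $u$ and $h\cdot\nabla u$, the sharp trace results of \cite{sharptrace}, reserving the star-shaped condition for the nonlinear boundary contribution, and the splitting of the damping per Assumption \ref{ass2}) matches the paper's, but there are two genuine gaps in how you propose to deploy it. First, the trace theorem you invoke does not do what you need: the result of \cite{sharptrace} (Theorem \ref{sharpt} in the paper) bounds only the \emph{second}-order traces $\partial_{\nu\nu}u,\ \partial_{\nu\tau}u,\ \partial_{\tau\tau}u$, and only in space-time integrated form over an interior interval $(\alpha,T-\alpha)$; it gives no control of the third-order trace $\partial_{\nu}(\Delta u)|_{\Gamma}$, and it is not a pointwise-in-time estimate. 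Consequently (a) your plan to ``apply the trace estimate from \cite{sharptrace} to dominate $\partial_\nu(\Delta u)|_\Gamma$'' fails outright, and (b) your perturbed-energy scheme $\frac{d}{dt}\big(\widehat E+\epsilon\,\Psi\big)\le -\omega(\cdots)+C$ cannot absorb the trace terms at all, since the only available trace estimates exist in integrated form. This is precisely why the paper derives an integral ``observability'' inequality on $[0,T]$ (Lemma \ref{obsineq}) rather than a differential inequality, and why it handles $\partial_\nu(\Delta u)$ not by a trace theorem but by \emph{returning to the equation}: applying $A^{-1}$ ($A$ the Dirichlet Laplacian) to the operator form of \eqref{plate} yields $\Delta u=-A^{-1}u_{tt}+D_L\big(\Delta u|_{\Gamma}\big)+A^{-1}F$, whose normal derivative involves only quantities controllable by elliptic regularity, the damping, and the second-order traces.

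Second, treating the Berger term as a generic forcing $F\in L^2$ destroys the structural feature on which the whole argument turns: because $f_B(u)$ is a time-dependent scalar multiple of $\Delta u$, one has the exact identity $A^{-1}f_B(u)=\big(\|\nabla u\|^2-\gamma\big)u$, and the resulting boundary term $2\int_0^T(\gamma-\|\nabla u\|^2)\int_\Gamma (h\cdot\nu)(\partial_\nu u)^2$ recombines with the flux-multiplier contribution $-\int_0^T(\gamma-\|\nabla u\|^2)\int_\Gamma(h\cdot\nu)|\partial_\nu u|^2$ so that the quartic piece $-\int_0^T\|\nabla u\|^2\int_\Gamma(h\cdot\nu)|\partial_\nu u|^2$ carries a favorable sign and is discarded via Assumption \ref{geom}. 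Without this cancellation, the sign-indefinite term $\int_0^T(\gamma-\|\nabla u\|^2)\int_\Gamma(h\cdot\nu)|\partial_\nu u|^2$ is not controllable at the finite-energy level, and the geometric assumption alone does not help since $\gamma-\|\nabla u\|^2$ changes sign. Finally, your target bound $\widehat E(t)\le C_1\widehat E(s)e^{-\omega(t-s)}+C_2$ with data-independent constants is stronger than what this machinery can deliver: pairing the damping against the second-order traces produces the superlinear term $\|D(\partial_\nu u_t)\|^2_{L^2(0,T;L^2(\Gamma))}\int_0^T\big(\widehat E(\tau)\big)^2 d\tau$, so the paper obtains only a contraction $\mathscr E_M(T)\le\eta\big(\widehat E(0),T\big)\mathscr E_M(0)+\overline{K}$ with $\eta<1$ \emph{depending on the initial energy}, and then secures the absorbing set by iterating on the intervals $(mT,(m+1)T)$, using $D_0^T=\mathscr E_M(0)-\mathscr E_M(T)$ and the monotonicity of the energy so that the same constants recur at every step (the scheme of \cite{c-e-l}). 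Uniform exponential decay is neither proved nor needed, and Gronwall plays no role at this stage.
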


\noindent In order to prove this central theorem, the main
technical ingredient will be the following {``observability"
estimate}, stated below.

\begin{lemma}\label{obsineq}
Let Assumption \ref{geom} is in force. Let $T>0$ and
$0<\alpha<T/2$. Then any solution to \eqref{plate} taken with
(HD) boundary conditions satisfies the following estimate:
\begin{equation*}
(T-2\alpha)\widehat E(T)\leq C \widehat{E}(0)+C(T)||D(\Dn
u_t)||^2_{L^2(0,T;L^2(\Gamma))}\Big[\int_0^T(\widehat E(\tau))^2
d\tau \Big]
\end{equation*}
\begin{equation}
+C(T)\left[||D(\Dn u_t)||_{L^2(0,T;L^2(\Gamma~))}^2+||\Dn
u_t||_{L^2(0,T;L^2(\Gamma))}^2\right]+C(T).\label{en*}
\end{equation}
\end{lemma}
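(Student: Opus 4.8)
The plan is to establish the observability estimate \eqref{en*} by the standard multiplier method for plate equations, with the flux multiplier $h\cdot\nabla u$ (where $h(\mathbf{x}) = \mathbf{x}-\mathbf{x}_0$ is the star-shaped field from Assumption \ref{geom}) as the principal multiplier, combined with the equipartition multiplier $u$ itself. The overall strategy is to multiply the PDE \eqref{plate} by $h\cdot\nabla u$ (and a lower-order correction), integrate over $(0,T)\times\Omega$, and carefully track all boundary contributions. I would begin by recalling the identity \eqref{hingedbiharmonic} for the biharmonic term, which exposes how the damping $D(\partial_\nu u_t)$ enters the boundary integrals and how it is ``pitted against'' the higher-order trace $\partial_\nu(\Delta u)$. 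The goal is to produce a lower bound of the form $(T-2\alpha)\widehat{E}(T)$ on the left, absorbing all ``good'' terms, while the damping, the nonlinear Berger contribution, and lower-order terms are relegated to the right-hand side.

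First I would carry out the computation of the standard multiplier identity $\int_0^T(u_{tt}+\Delta^2 u, h\cdot\nabla u)_\Omega\,dt$, which yields a coercive interior term controlling $\widehat{E}$ (the kinetic energy $\|u_t\|^2$ and the biharmonic energy $\|\Delta u\|^2$) together with boundary terms from the integration by parts against $\Delta^2 u$. Because the multiplier $h\cdot\nabla u$ has the correct scaling, the interior contribution produces the desired $(T-2\alpha)\widehat{E}(T)$ after using energy identity \eqref{enh} to relate $\widehat{E}$ at various times (and the factor $T-2\alpha$ arises from integrating the energy over a subinterval and using monotonicity of $\mathscr{E}$). The Berger nonlinearity $(\gamma-\|\nabla u\|^2)\Delta u$ tested against $h\cdot\nabla u$ contributes an interior term proportional to $\|\nabla u\|^2$ times quantities controlled by $\widehat{E}$, which after integration in time produces the factor $\int_0^T(\widehat{E}(\tau))^2\,d\tau$ seen on the right-hand side, weighted by a norm of the damping.

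Next I would treat the boundary terms, which is where \textbf{the main obstacle} lies. On $\Gamma$ we have $u=0$ (so tangential derivatives of $u$ vanish, and $\nabla u = (\partial_\nu u)\nu$), and $\Delta u = -D(\partial_\nu u_t)$. The delicate point is controlling the second-order trace term $\partial_\nu(\Delta u)$ and the term $\langle\Delta u,\partial_\nu(h\cdot\nabla u)\rangle_\Gamma$ arising from \eqref{hingedbiharmonic}: these are at a regularity level not controlled by finite energy. This is precisely where I would invoke the sharp trace results of \cite{sharptrace}, which allow estimation of the second-order boundary traces of solutions to the Euler-Bernoulli plate with $L^2$ right-hand side \emph{without} a geometric condition, thereby bounding these terms by $\|D(\partial_\nu u_t)\|^2_{L^2(0,T;L^2(\Gamma))}$ and $\|\partial_\nu u_t\|^2_{L^2(0,T;L^2(\Gamma))}$. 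The star-shaped sign condition $h\cdot\nu\ge 0$ from Assumption \ref{geom} is what I expect to be indispensable for discarding (or correctly signing) the leftover boundary term involving $(h\cdot\nu)|\partial_\nu u|^2$ or the analogous Berger boundary contribution $(\gamma-\|\nabla u\|^2)\langle\partial_\nu u, h\cdot\nabla u\rangle_\Gamma$, which cannot be absorbed otherwise.

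Finally, I would collect the lower-order terms—those involving $\|u\|_{2-\eta}$ norms and the non-coercive pieces of $\mathscr{E}$—and absorb them using the potential energy bound \eqref{pibound} and its companion \eqref{lot}, which trade these against a small multiple of $\|\Delta u\|^2+\tfrac12\|\nabla u\|^4$ plus a constant $M(\epsilon)$; this accounts for the additive $C(T)$ appearing at the end of \eqref{en*}. The lower-order terms generated by the equipartition multiplier $u$ and by commutators with $h$ would likewise be controlled this way, or by Sobolev trace embeddings into the damping norms. Throughout, the computations are performed first on strong solutions (where all the integrations by parts are justified) and then extended to generalized solutions by the density/limit argument used already in the well-posedness proof. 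Assembling all bounds and choosing $\epsilon$ small to absorb the coercive terms on the left yields exactly the estimate \eqref{en*}.
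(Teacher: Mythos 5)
Your scaffolding (the multipliers $u$ and $h\cdot\nabla u$, working on $(\alpha,T-\alpha)$ and using monotonicity of the energy, absorption of lower-order terms via \eqref{pibound}--\eqref{lot}, computations on strong solutions extended by density) matches the paper, but there is a genuine gap at precisely the point the paper identifies as its critical step: the third-order trace $\partial_\nu(\Delta u)\big|_\Gamma$. You propose to bound this term by invoking the sharp trace results of \cite{sharptrace}, but Theorem \ref{sharpt} controls only the \emph{second-order} traces $\partial_{\tau\tau}u$, $\partial_{\nu\nu}u$, $\partial_{\tau\nu}u$; it gives no information about $\partial_\nu(\Delta u)$, which is one derivative higher and is not at finite-energy level. (Your treatment of the companion term $\langle \Delta u,\partial_\nu(h\cdot\nabla u)\rangle_\Gamma$ is fine, since there $\Delta u=-D(\partial_\nu u_t)$ by the boundary condition and $\partial_\nu(h\cdot\nabla u)$ is a second-order trace covered by \eqref{trace} via \eqref{512}.) The paper's mechanism for $\partial_\nu(\Delta u)$ is different and essential: it \emph{returns to the equation}, writing $\Delta u=-A^{-1}u_{tt}+D_L(\Delta u|_\Gamma)+A^{-1}F$ with $A$ the Dirichlet Laplacian and $D_L$ the Dirichlet lift, and takes the normal derivative of this identity (see \eqref{thisrelation}). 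This reduces $\partial_\nu(\Delta u)$ to tractable pieces: $\partial_\nu(A^{-1}u_{tt})$ is handled by integration by parts in time plus elliptic regularity, and $\partial_\nu D_L\in\mathscr{L}(L^2(\Gamma),H^{-1}(\Gamma))$ is paired against the second-order traces---which is where \eqref{trace} actually enters. Crucially, it also exploits the Berger structure through the exact identity \eqref{relation2}, $A^{-1}F=(\|\nabla u\|^2-\gamma)u+A^{-1}p$ (valid since $u\in H^1_0(\Omega)$), so that $\partial_\nu(A^{-1}F)(h\cdot\nabla u)$ reproduces $(\|\nabla u\|^2-\gamma)(h\cdot\nu)|\partial_\nu u|^2$ on the boundary and cancels against the nonlinear flux term, as in \eqref{hat1}.

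This cancellation also repairs a second weak point in your plan: you suggest that the star-shaped condition can sign, and hence discard, the Berger boundary contribution $(\gamma-\|\nabla u\|^2)\int_\Gamma(h\cdot\nu)|\partial_\nu u|^2$ directly. It cannot, because $\gamma-\|\nabla u\|^2$ is sign-indefinite (recall $\gamma\ge 0$). Only \emph{after} the cancellation does one obtain the purely nonpositive piece $-\int_0^T\|\nabla u\|^2\int_\Gamma(h\cdot\nu)|\partial_\nu u|^2$, which Assumption \ref{geom} allows one to drop, plus a $\gamma$-weighted boundary term that is genuinely lower order. Relatedly, you misattribute the origin of the quadratic factor in \eqref{en*}: the term $\|D(\partial_\nu u_t)\|^2_{L^2(0,T;L^2(\Gamma))}\int_0^T(\widehat E)^2$ does not arise from the interior Berger term tested against $h\cdot\nabla u$ (that contribution is converted by the divergence identity into energy terms and the boundary flux term above); it comes from the $\|f_B(u)\|_{H^{-s_0}}\le C\,\widehat E$ contribution inside the sharp trace estimate \eqref{trace}, multiplied by the damping norm and split by Young's inequality. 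Without the return-to-the-equation representation and the resulting cancellation, the estimate \eqref{en*} does not close.
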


\begin{proof}[Proof of Lemma \ref{obsineq}]
Now, let $u$ be a generalized solution to \eqref{plate} taken with
(HD) boundary conditions. We begin with the equipartition
multiplier
$u$ and the flux multiplier $h\cdot \nabla u$, where $h(\mathbf{x})=\mathbf{x}-\mathbf{x}_{0}$ for some appropriately chosen $%
\mathbf{x}_{0}\in \mathbb{R}^{2}$ (satisfying Assumption \ref{geom}). Multiplying \eqref{plate} by
$u$ and $h\cdot\nabla u$, respectively, integrating over the
space-time cylinder, and using Green's theorem we have
\begin{equation}
\int_0^T\left\{ ||\Delta u||^{2}+||\nabla
u||^{4}-||u_{t}||^{2}\right\} +\int_0^T \int_{\Gamma~}D(\partial
_{\nu }u_t)(\partial _{\nu
}u)=-(u_{t},u)\big|_{0}^{T}+\int_0^T\left\{ (p,u)+\gamma ||\nabla
u||^{2}\right\},
 \label{mult1}
\end{equation}%
and
\begin{align}
 \int_0^T&\left\{ ||\Delta u||^{2}+||u_{t}||^{2}\right\} +\int_0^T \int_{\Gamma }\left\{
\frac{1}{2}|\Delta u|^{2}(h\cdot \nu )+\partial _{\nu }(\Delta
u)(h\cdot \nabla u)-(\Delta u)\partial _{\nu }(h\cdot \nabla
u)\right\}\nonumber \\
&+\dfrac{1}{2}\int_{0}^{T}(\gamma -||\nabla
u||^{2})\int_{\Gamma }(h\cdot \nu )|\partial _{\nu }u|^{2}
=-(u_{t},h\cdot \nabla u)%
\big|_{0}^{T}+ \int_{0}^{T} (p,h\cdot \nabla u). \label{mult2}
\end{align}
 We have used the facts:
\begin{align*}
\Big((\gamma-||\nabla u||^2)\Delta u, h\cdot \nabla u\Big)
=&~(||\nabla u||^2-\gamma)\Big(\nabla u,\nabla(h \cdot \nabla
u)\Big)+\int_{0}^T[\gamma-||\nabla u||^2]\int_{\Gamma~}
(\partial_{\nu} u)(h\cdot \nabla u)\\
\Big(\nabla u,\nabla(h \cdot \nabla u)\Big) =
&~\dfrac{1}{2}\text{div} \big(h|\nabla
u|^2\big)~~~~\text{and}~~~~h \cdot \nabla u = (h\cdot
\nu)\partial_{\nu} u~~\text{on}~~\Gamma,
\end{align*}
owing to the divergence theorem, and the zero Dirichlet condition
on the whole of $\Gamma$ (for (HD)). Now, taking a suitable
combination of \eqref{mult1} and \eqref{mult2} we obtain:
\begin{align}
\int_{0}^T\big\{3||\Delta u||^{2}& +||\nabla
u||^{4}+||u_{t}||^{2}-\gamma ||\nabla u||^{2}\big\}
 +\int_{0}^{T}(\gamma -||\nabla u||^{2})\int_{\Gamma}(h\cdot
\nu )|\partial _{\nu }u|^{2} \notag   \\
=& -\big\{(u_{t},u)\big|_{0}^{T}+2(u_{t},h\cdot \nabla u)\big|_{0}^{T}\big\}+ \int_{0}^T (p,u)+2\int_{0}^T (p,h%
\cdot \nabla u)-\int_0^T \int_{\Gamma~}D(\partial _{\nu }
u_t)(\partial
_{\nu }u)  \notag \\
& +\int_0^T \int_{\Gamma }\big\{2(\Delta u)\partial _{\nu }(h\cdot
\nabla u)-(h\cdot \nu )|\Delta u|^{2}-2\partial _{\nu }(\Delta
u)(h\cdot \nabla u)\big\}.\label{prelimid}
\end{align}%
Using the bound on the nonlinear potential energy \eqref{pibound} and reorganizing the terms above,
we have a preliminary estimate:
\begin{align}
c\int_{0}^{T}\widehat{E}(\tau )\leq \notag
\\
\mathbf{(T1)}~~ & -\Big\{(u_{t},u)\big|_{0}^{T}+2(u_{t},h\cdot \nabla u)\big|_{0}^{T}\Big\}+ \int_{0}^T (p,u)+2\int_{0}^T (p,h%
\cdot \nabla u)+C(\gamma)\cdot T \notag \\
\mathbf{(T2)}~~& -2\int_{0}^T \int_{\Gamma }\partial _{\nu
}(\Delta u)(h\cdot \nabla u)-\int_{0}^{T}(\gamma -||\nabla
u||^{2})\int_{\Gamma}(h\cdot \nu
)|\partial _{\nu }u|^{2} \notag\\
\mathbf{(T3)}~~& -\int_0^T \int_{\Gamma~}D(\partial _{\nu }
u_t)(\partial _{\nu }u)+\int_{0}^T\int_{\Gamma~}2(\Delta
u)\partial _{\nu }(h\cdot \nabla
u)-\int_0^T\int_{\Gamma} (h\cdot \nu )|\Delta u|^{2}
\label{prelimid'}
\end{align}
Firstly, by \eqref{pibound} and Young's inequality, we observe
\begin{equation}
|\textbf{(T1)}| \le C\widehat E(0)+\epsilon \left[\widehat
E(T)+\int_0^T\widehat
E(\tau)d\tau\right]+C(p,\epsilon,\gamma)\cdot T.
\end{equation}

Now, employing the geometric Assumption \ref{geom} on ({\bf
T2})--({\bf T3}), dropping the negatively signed terms on the RHS,
combining our computations thus far and reorganizing, we obtain
the following intermediate relation:
\begin{align}
c\int_{0}^{T}\widehat{E}(\tau )\leq & ~C\widehat{E}(0)+\epsilon
\left[\widehat
E(T)+\int_0^T\widehat E(\tau)\right]+C(p,\epsilon,\gamma)\cdot T  \notag\\
& -2\int_{0}^T \int_{\Gamma}\partial _{\nu }(\Delta u)(h\cdot
\nabla u)-\int_{0}^{T}(\gamma -||\nabla u||^{2})\int_{\Gamma
}(h\cdot \nu
)|\partial _{\nu }u|^{2} \notag\\
& -\int_0^T \int_{\Gamma~}D(\partial _{\nu } u_t)(\partial _{\nu
}u)+\int_{0}^T\int_{\Gamma~}2(\Delta u)\partial _{\nu }(h\cdot
\nabla u).
\label{energy2}
\end{align}%

\noindent In order to estimate the RHS of the above
inequality---which involves higher order trace terms---we follow
the analysis of \cite{ji} (which itself critically relies on
Theorem \ref{sharpt} from \cite{sharptrace} given below). However,
 we approach the higher order trace term involving $\partial _{\nu
}(\Delta u)$  in a more straightforward way which, ultimately,
benefits our analysis and fundamentally exploits the structure of
the Berger nonlinearity to obtain cancellations.

Now, let the operator $A=\Delta$, acting on $L^2(\Omega)$ with
domain $H^2(\Omega)\cap H_{0}^1(\Omega)$, and let $D_L$ be the
associated Dirichlet ``lift" map defined by
$$D_L g = f \in L^2(\Omega) ~\iff~ \Delta f = 0~\text{ in}~\Omega ~\text{ and }~
f= g~\text{ on }~ \Gamma.$$ Let $F\equiv\Big\{-f_B(u)+p\Big\}.$
Accounting for boundary conditions, the operator representation of
\eqref{plate} is then
\begin{equation}\label{thiss}
 {u}_{tt}+A\left[A {u}-D_L\left(\Delta
 {u}\big|_{\Gamma}\right)\right]=F.
\end{equation}
Applying $A^{-1}$ to \eqref{thiss} (justified on strong solutions,
and a posteriori on generalized solutions via the corresponding estimate) we obtain
\begin{equation*}
\Delta  {u} = A  {u} = -A^{-1}
 {u}_{tt}+D_L\left(\Delta
 {u}\big|_{\Gamma}\right)+A^{-1} F ~~\text{in}~~
\mathscr D'(\Omega).
\end{equation*}

\noindent Now, taking the normal derivative of both sides of above
equality, multiplying by $h\cdot \nabla u$, integrating over
$[0,T] \times \Gamma$, and reading off from the equation we have
the relation
\begin{align}
\int_{0}^{T}\int_{\Gamma}\Dn(\Delta   u)(h\cdot \nabla u) =&~
-\int_{0}^{T}\int_{\Gamma}\Dn (A^{-1} u_{tt})(h\cdot\nabla
u)+\int_{0}^{T}\int_{\Gamma~}\Dn D_L\left(\Delta
u\big|_{\Gamma}\right)(h\cdot \nabla
 u)\nonumber\\ &+\int_{0}^{T}\int_{\Gamma} (\Dn A^{-1}F)
(h\cdot \nabla u)\label{thisrelation}.
\end{align}

\noindent In order to estimate each term of the RHS of
\eqref{thisrelation} we use the approach in
 \cite{ji}. The following is the {\em critical step} which
allows our approach to the higher order trace terms to obtain. {\em This relies critically
on the structure of the Berger nonlinearity.} If we note that
\begin{align}
A^{-1}F=&~ (\Delta_{D_L})^{-1}\big[||\nabla
u||^2-\gamma\big]\Delta
u+A^{-1}p \notag\\[.2cm]
=&~\big[||\nabla u||^2-\gamma\big] u+A^{-1} p, \label{relation2}
\end{align}
and again $h\cdot \nabla u =(h\cdot \nu) \Dn u$, since
$u=0$ on $\Gamma$; we have:
\begin{align}\nonumber
-2\int_{0}^{T}&\int_{\Gamma}\Dn(\Delta   u)(h\cdot \nabla
u)-\int_{0}^{T}(\gamma -||\nabla u||^{2})\int_{\Gamma}(h\cdot
\nu )|\partial _{\nu }u|^{2}\\=&~ 2\int_{0}^{T}\int_{\Gamma}\Dn
(A^{-1} {u}_{tt})(h\cdot \nabla u)-2\int_{0}^{T}\int_{\Gamma~}\Dn
D_L\left(\Delta
{u}\big|_{\Gamma}\right)(h\cdot \nabla u) \nonumber\\
&+2\int_{0}^{T}(\gamma-||\nabla u||^2) \int_{\Gamma}(h\cdot
\nu)(\Dn u)^2-2\int_{0}^{T} \int_{\Gamma} \Dn(A^{-1}
p(\xb))(h\cdot \nabla u)\nonumber\\
&-\int_{0}^{T}(\gamma -||\nabla u||^{2})\int_{\Gamma}(h\cdot
\nu )|\partial _{\nu }u|^{2}
\end{align}
\begin{remark}
The cancellation in the nonlinear terms above is {\em critical} to the argument below. We
will be able to use the sign of the term and the standard star-shaped geometric condition
Assumption \ref{geom} on $\Gamma~$ to discard the nonlinear
boundary contribution from our energy approach.
\end{remark}
\begin{align}\nonumber
-2\int_{0}^{T}&\int_{\Gamma}\Dn(\Delta   u)(h\cdot \nabla
u)-\int_{0}^{T}(\gamma -||\nabla u||^{2})\int_{\Gamma}(h\cdot
\nu
)|\partial _{\nu }u|^{2}\\
=&~2\int_{0}^{T}\int_{\Gamma}\Dn (A^{-1} {u}_{tt})(h\cdot \nabla
u)-2\int_{0}^{T}\int_{\Gamma~}\Dn D_L\left(\Delta
{u}\big|_{\Gamma}\right)(h\cdot \nabla u) \nonumber\\
&+\gamma\int_0^T\int_{\Gamma}(h\cdot
\nu)(\Dn u)^2\cancelto{\le 0}{-\int_{0}^{T}||\nabla u||^2 \int_{\Gamma}(h\cdot
\nu)(\Dn u)^2}\nonumber\\
&-2\int_{0}^{T} \int_{\Gamma} \Dn(A^{-1}
p(\xb))(h\cdot \nabla u).\label{hat1}
\end{align}

\noindent Now, we estimate the integrals on the RHS of the above
inequality, term by term. Using integration by parts in time we
note that
$$\int_{0}^{T}\int_{\Gamma}(\Dn(A^{-1}u_{tt})(h\cdot \nabla u)  = \left(\Dn(A^{-1}u_{t}),\Dn u\right)_{\Gamma}\Big|_{0}^{T}-\int_{0}^{T}\int_{\Gamma}(h\cdot \nu)(\Dn(A^{-1}u_{t})(\Dn u_t).$$

\noindent From the elliptic regularity theorem \cite[Ch. 3]{control}: ~ for any 
$\displaystyle h \in L^2(\Omega)$, we note$~~||A^{-1}h||_2 \le
C||h||_{0,\Omega},$ and thus we have:
\begin{align*} ||\Dn(A^{-1}h)||_{0,\Gamma} \le  ~C||A^{-1}h||_{3/2+\epsilon,\Omega}
\le ~C||A^{-1}h||_{2,\Omega} \le  ~C||h||_{0,\Omega}.
\end{align*}
Hence, compactness of the Sobolev embeddings (and Lions' Lemma \cite[p.108]{kes}),  the H\"{o}lder--Young inequality  yields
that
\begin{equation}\int_{0}^{T}\int_{\Gamma}(\Dn(A^{-1}u_{tt})(h\cdot \nabla u)
\le \epsilon \widehat{E}(T)+
C(\epsilon)||u(t)||_{0,\Omega}+C\widehat{E}(0)+\epsilon
\int_0^T\widehat{E}(\tau)+C(\epsilon)\int_0^T||\Dn
u_t||_{0,\Gamma~}^2 \label{517}.
\end{equation}
Again, from standard elliptic theory, $\Dn D_L \in \mathscr
L\left(L^2(\Gamma), H^{-1}(\Gamma)\right)$. Thus we have
\begin{align}\left|\int_{0}^{T}\int_{\Gamma~}\Dn D_L\left(\Delta  u\big|_{\Gamma}\right)(h\cdot \nabla u)\right| \le& \int_{0}^{T} \left|\left|\Dn D_L\left(\Delta  u\big|_{\Gamma}\right)\right|\right|_{H^{-1}(\Gamma)}\left|\left|h\cdot \nabla u\right|\right|_{H^1(\Gamma)}
\nonumber \\
\le &~C(h) \left(
\left|\left|D(\Dn u_t)\right|\right|_{L^2(0,T
;L^2(\Gamma))}\right)\times \nonumber \\
& \left\{\int_{0}^{T}\left|\left|\dfrac{\partial^2u}{\partial
\nu^2}\right|\right|_{L^2(\Gamma)}^{2}+\left|\left|\dfrac{\partial^2u}{\partial
\tau\partial
\nu}\right|\right|_{L^2(\Gamma)}^{2}\right\}^{1/2}\label{518}
\end{align}

\noindent and similarly,
\begin{equation}
\int_{0}^{T} \int_{\Gamma} \Dn(A^{-1}  p(\xb))(h\cdot \nabla
u)\leq \epsilon \int_0^T \widehat{E}(\tau)+C(p,\epsilon)\cdot
T.\label{519}
\end{equation}

\noindent Taking into account \eqref{517}--\eqref{519} in
\eqref{hat1}, considering $u=0$ on $\Gamma$, employing Assumption
\ref{geom}, using \eqref{pibound}, and applying the H\"{o}lder-Young
  we thus arrive at the next preliminary estimate which can be implemented in \eqref{energy2}:
\begin{align}
-2\int_{0}^{T}&\int_{\Gamma}\Dn(\Delta  u)(h\cdot \nabla
u)-\int_{0}^{T}(\gamma-||\nabla u||^2) \int_{\Gamma}(h\cdot
\nu)(\Dn u)^2\\
\le &~C(h) \left(\left|\left|D(\Dn u_t)\right|\right|_{L^2(0,T
;L^2(\Gamma))}\right)\left\{\int_{0}^{T}\left|\left|\dfrac{\partial^2u}{\partial
\nu^2}\right|\right|_{L^2(\Gamma)}^{2}+\left|\left|\dfrac{\partial^2u}{\partial
\tau\partial
\nu}\right|\right|_{L^2(\Gamma)}^{2}\right\}^{1/2}\nonumber\\
&+C(\epsilon)\int_0^T||\Dn u_t||_{L^2(\Gamma)}^2+\epsilon
\left\{ \widehat{E}(T)+\int_0^T \widehat{E}(\tau)\right\}+C
\widehat{E}(0)+C(p,\epsilon,\gamma,h)\cdot T \nonumber\\
&+\gamma \int_{0}^{T} \int_{\Gamma}(h\cdot
\nu)(\Dn u)^2. \label{tr1}
\end{align}
For the  terms in the last line of \eqref{energy2} involving the
higher order trace term $\partial _{\nu }(h\cdot \nabla u)$, we
note the boundary condition $\Delta u = -D(\Dn u_t)$ on
$\Gamma~$, and we have
\begin{align}
\int_{0}^{T} \int_{\Gamma~}D(\partial _{\nu } u_t)(\partial _{\nu
}u)+\int_{0}^{T}\int_{\Gamma~}2(\Delta u)\partial _{\nu }(h\cdot
\nabla u) \leq&\nonumber \\   ||D(\Dn
u_t)||_{L^2(0,T;\Gamma)}||\Dn
u||_{L^2(0,T;\Gamma)}+||D(\Dn
u_t)&||_{L^2(0,T;L^2(\Gamma))}||\partial_{\nu}(h\cdot \nabla
u)||_{L^2(0,T;L^2(\Gamma))}.\label{T3}
\end{align}

\noindent We now combine \eqref{tr1} and \eqref{T3} in
\eqref{energy2}, and absorb terms to obtain:
\begin{align}
\int_{0}^{T}\widehat{E}(\tau)\leq &~C(h)\left( \left|\left|D(\Dn u_t)\right|\right|_{L^2(0,T
;L^2(\Gamma))}\right)\left\{\int_{0}^{T}\left|\left|\dfrac{\partial^2u}{\partial
\nu^2}\right|\right|_{L^2(\Gamma)}^{2}+\left|\left|\dfrac{\partial^2u}{\partial
\tau\partial
\nu}\right|\right|_{L^2(\Gamma)}^{2}\right\}^{1/2} \nonumber \\
&+C(\epsilon)\int_0^T||\Dn u_t||_{L^2(\Gamma)}^2+\epsilon
\widehat{E}(T)+C
\widehat{E}(0)+C(p,\epsilon,\gamma,h)T \nonumber\\
&+C||D(\Dn u_t)||_{L^2(0,T;\Gamma)}||\Dn
u||_{L^2(0,T;\Gamma)}+C||D(\Dn
u_t)||_{L^2(0,T;L^2(\Gamma))}||\partial_{\nu}(h\cdot \nabla
u)||_{L^2(0,T;L^2(\Gamma))}
\end{align}%

\noindent Now, considering \eqref{energy2} over the interval
$(\alpha,T-\alpha)$ instead of $(0,T)$ (hence, performing the
calculations above on $(\alpha,T-\alpha)$), we can use the
decreasing nature (modulo a constant) of the energy functional
$\big($\eqref{enh} and \eqref{nonen}$\big)$, and then extend the non-critical
integrals back on $(0,T)$. We obtain:
\begin{align}
\int_{\alpha}^{T-\alpha}\widehat{E}(\tau )\leq &~C(h)\left( \left|\left|D(\Dn
u_t)\right|\right|_{L^2(0,T
;L^2(\Gamma))}\right)\left\{\int_{\alpha}^{T-\alpha}\left|\left|\dfrac{\partial^2u}{\partial
\nu^2}\right|\right|_{L^2(\Gamma)}^{2}+\left|\left|\dfrac{\partial^2u}{\partial
\tau\partial
\nu}\right|\right|_{L^2(\Gamma~)}^{2}\right\}^{1/2}\nonumber \\
&+C(\epsilon)\int_0^T||\Dn u_t||_{L^2(\Gamma)}^2+\epsilon
 \widehat{E}(T-\alpha)+C
\widehat{E}(\alpha)+C(p,\epsilon,\gamma,h)\cdot [T-2\alpha] \nonumber\\
&+C||D(\Dn u_t)||_{L^2(0,T;\Gamma)}||\Dn
u||_{L^2(\alpha,T-\alpha;\Gamma)}\nonumber\\&+C||D(\Dn
u_t)||_{L^2(0,T;L^2(\Gamma))}||\partial_{\nu}(h\cdot \nabla
u)||_{L^2(\alpha,T-\alpha;L^2(\Gamma))} \label{energy3}
\end{align}%

The following sharp regularity result for the boundary traces of
solutions to the Euler-Bernoulli equation (linear, with given RHS)
will be critically used:

\begin{theorem}\label{sharpt}(\cite[p.281,Theorem 2.4 ]{sharptrace})
Let $0 < \alpha <T$, $0<\delta<1/2$, and $s_0<1/2$ be arbitrary.
Then the generalized solutions of \eqref{plate} with $(HD)$
boundary conditions enjoy the bound
\begin{align}
\Big[\int_{\alpha}^{T-\alpha} \int_{\Gamma}|\partial_{\tau
\tau}u|^2+|\partial_{\nu \nu}u|^2+|\partial_{\tau \nu}u|^2 d\Gamma
dt \Big]^{1/2} \le&~
~C(T,\alpha,\delta)\Big\{||f(u)||_{L^2(0,T;H^{-s_0}(\Omega))}\notag\\
+||u||_{L^2(0,T;H^{2-\delta}(\Omega))}
+&||D(\partial_{\nu}u_t)||_{L^2(0,T;L^{2}(\Gamma))}+||\partial_{\nu}u_t||_{L^2(0,T;L^{2}(\Gamma))}\Big\}\label{trace}
\end{align}
\end{theorem}

\noindent Hereafter, the explicit dependence of the above
constants on $\alpha,\delta,\gamma,h$ will be suppressed. For the
first term of RHS of above inequality, we make use of
\eqref{trace} and  recall the nonlinear energy $\widehat E$; from  Young's inequality we obtain:
\begin{align}\nonumber
||D(\Dn&
u_t)||_{L^2(0,T;L^2(\Gamma))}\Big\{\int_{\alpha}^{T-\alpha}\left|\left|\dfrac{\partial^2u}{\partial
\nu^2}\right|\right|_{L^2(\Gamma)}^{2}+\left|\left|\dfrac{\partial^2u}{\partial
\tau\partial \nu}\right|\right|_{L^2(\Gamma)}^{2}\Big\}^{1/2}\\
\le&~C +C(T)||D(\Dn
u_t)||^2_{L^2(0,T;L^2(\Gamma))}\Big[\int_0^T\left|\left|(\gamma-||\nabla
u||^2)\Delta u\right|\right|_{-s_0}^2 d\tau \Big]\nonumber\\
&+C(T)\left[||D(\Dn u_t)||_{L^2(0,T;L^2(\Gamma))}^2+||\Dn
u_t||_{L^2(0,T;L^2(\Gamma))}^2
+||u||_{L^2(0,T;H^{2-\delta}(\Omega))}^2\right] \nonumber\\\le
&~C+C(T)||D(\Dn
u_t)||^2_{L^2(0,T;L^2(\Gamma))}\Big[\int_0^T\left(\widehat E(\tau)\right)^2
d\tau \Big] \nonumber\\
&+C(T)\left[||D(\Dn u_t)||_{L^2(0,T;L^2(\Gamma))}^2+||\Dn
u_t||_{L^2(0,T;L^2(\Gamma))}^2
+C||u||_{L^2(0,T;H^{2-\delta}(\Omega))}^2\right]\nonumber\\
\le &~C+\epsilon \int_{\alpha}^{T-\alpha}
\widehat{E}(\tau)+C\widehat{E}(\alpha)+C(T,\epsilon)+C(T)||D(\Dn
u_t)||^2_{L^2(0,T;L^2(\Gamma))}\Big[\int_0^T\left(\widehat E(\tau)\right)^2
d\tau \Big]\nonumber\\
&+C(T)\left[||D(\Dn u_t)||_{L^2(0,T;L^2(\Gamma))}^2+||\Dn
u_t||_{L^2(0,T;L^2(\Gamma))}^2\right].\label{T2}
\end{align}

\noindent We proceed with estimating the last line of the RHS of
\eqref{energy3}. Using Young's inequality, the trace theorem, and \eqref{pibound} we
get
\begin{align*}
||D(\Dn u_t)||_{L^2(0,T;\Gamma)}||\Dn
u||_{L^2(\alpha,T-\alpha;\Gamma)}\leq \epsilon \int_{\alpha}^{T-\alpha}
\widehat{E}(\tau)+C\widehat{E}(\alpha)+\int_0^T||D(\Dn
u_t)||_{L^2(\Gamma)}^2+ C(\epsilon)\cdot T.
\end{align*}

\noindent Considering \begin{equation}
\partial_{\nu}(h\cdot \nabla u) = \partial_{\nu}u+(h \cdot \nu)\partial_{\nu \nu}u+(h\cdot \tau)\partial_{\nu\tau}
u, \label{512}
\end{equation} and applying similar steps as before,
 we have:
\begin{align*}
||D(\Dn u_t)||_{L^2(0,T;L^2(\Gamma))}&||\partial_{\nu}(h\cdot
\nabla u)||_{L^2(\alpha,T-\alpha;L^2(\Gamma))}\nonumber \le
\\~&\epsilon \int_{\alpha}^{T-\alpha}
\widehat{E}(\tau)+C\widehat{E}(\alpha)+C(T) +C(T)||D(\Dn
u_t)||^2_{L^2(0,T;L^2(\Gamma))}\Big[\int_0^T\left(\widehat E(\tau)\right)^2
d\tau \Big]\nonumber \\
&+C(T)\left[||D(\Dn u_t)||_{L^2(0,T;L^2(\Gamma))}^2+||\Dn
u_t||_{L^2(0,T;L^2(\Gamma))}^2\right].
\end{align*}

\noindent If we take into account the last two inequalities we
have:
\begin{align}\nonumber
||D(\Dn u_t)||_{L^2(0,T;\Gamma)}&||\Dn
u||_{L^2(\alpha,T-\alpha;\Gamma)}+||D(\Dn
u_t)||_{L^2(0,T;L^2(\Gamma))}||\partial_{\nu}(h\cdot \nabla
u)||_{L^2(\alpha,T-\alpha;L^2(\Gamma))}\\
\le &~\epsilon \int_{\alpha}^{T-\alpha}
\widehat{E}(\tau)+C\widehat{E}(\alpha)+C(T)+C(T)||D(\Dn
u_t)||^2_{L^2(0,T;L^2(\Gamma))}\Big[\int_0^T(\widehat E(\tau))^2
d\tau \Big]\nonumber \\
&+C(T)\left[||D(\Dn u_t)||_{L^2(0,T;L^2(\Gamma))}^2+||\Dn
u_t||_{L^2(0,T;L^2(\Gamma))}^2\right]\label{T33}.
\end{align}

\noindent Finally, we recall that from the energy identity and \eqref{enh}, we have for $s\le t$:
\begin{align}\label{facts}
\widehat E(t) \le&~ C_1\widehat E(s)+C_2 \\
[t-s] \widehat E(t) \le&~ C_1 \int_s^t \widehat E(\tau)d\tau +C_2[t-s]
\end{align}

\noindent Thus, integrating the energy equality \eqref{enh}  on $[\alpha, T-\alpha]$ and employing \eqref{nonen}, as well as using \eqref{T2}--\eqref{T33} in \eqref{energy3},
we finally have:

\begin{align*}
(T-2\alpha)\widehat E(T)\leq &~C \widehat{E}(0)+ C(T)||D(\Dn
u_t)||^2_{L^2(0,T;L^2(\Gamma))}\Big[\int_0^T(\widehat E(\tau))^2
d\tau \Big]\\
&+C(T)\left[||D(\Dn u_t)||_{L^2(0,T;L^2(\Gamma))}^2+||\Dn
u_t||_{L^2(0,T;L^2(\Gamma))}^2\right]+C(T),
\end{align*}

\noindent which is the desired observability estimate, written in
terms of the nonlinear, positive energy $\widehat E$.
\end{proof}
\begin{remark}\label{duh}
Going back to the equation to obtain trace estimates (and making use of the sharp trace result \eqref{trace} above) is the
key insight into the present problem. This step connects our
observability type estimate to the long-time behavior analysis of
\cite{c-e-l} by utilizing the fact that {\em the nonlinear energy
appears under the time integration against the damping mechanism.}
\end{remark}

\begin{remark}
We emphasize that at multiple times in the argument above
{\em we rely critically on the structure of the Berger
nonlinearity}---particularly in the ability to drop the term
$$-\int_{0}^{T}||\nabla u||^{2}\int_{\Gamma}(h\cdot \nu
)|\partial _{\nu }u|^{2}$$ via the geometric condition and in the
estimate of higher order trace terms (See Remark \ref{duh}). Since
the von Karman nonlinearity does not possess a structure which
accommodates the trace estimates needed above, we cannot expect
a straightforward approach to yield similar results there. Thus, for
the Berger nonlinearity, in addition to the existence of a compact
global attractor, we also have an explicit estimate on the
absorbing set (not available when the attractor is obtained---for
gradient systems---indirectly through the asymptotic smoothness
property).
\end{remark}
\vspace{0.3cm}\noindent \emph{\textbf{Completion of the Proof of
Theorem \ref{abs-ball}}} \vspace{0.25cm} \\Now, let us denote

\begin{equation}
D_0^T = \int_0^T\int_{\Gamma~}
D(\partial_{\nu}u_t)(\partial_{\nu}u_t).
\end{equation}
\noindent Since $D(0)=0$, we have for $s\geq 2$ that
\[
D(s)=\int_{0}^{s}D^{\prime }(\tau )d\tau =s\int_{0}^{1}D^{\prime
}(\tau s)d\tau \geq s\underset{s\geq 2}{\inf
}\int_{0}^{1}D^{\prime }(\tau s)d\tau
\]%
\[ \hskip2.5cm
\geq s\underset{s\geq 2}{\inf }\int_{1/2}^{1}D^{\prime }(\tau
s)d\tau \geq \frac{s}{2}\underset{s\geq 2}{\inf~~}
\underset{~1/2\leq \tau \leq 1}{\inf
}D^{\prime }(\tau s)=\frac{s}{2}\underset{\xi \geq 1}{\inf }%
D^{\prime }(\xi )\geq \frac{s}{2}m.
\]%
It can be shown in an analogous way that for $s\leq -2$ we have $%
D(s)\leq \frac{s}{2}m.$ Additionally, with a given $\epsilon
>0$ we have
for $s\in (\epsilon ,2)$:
\[
D(s)=\int_{0}^{s}D^{\prime }(\tau )d\tau \geq \int_{0}^{\epsilon
}D^{\prime }(\tau )d\tau \geq m_{\epsilon }~\frac{s}{2}
\]%
with a same inequality for  $s\in (-2,-\epsilon ).$ Hence the
above relations give that:
\[
D(s)s\geq \frac{s^{2}}{2}m\text{ \ \ for }\left\vert s\right\vert \geq 2%
\text{ \ \ \ \ \ and \ \ \ \ }D(s)s\geq m_{\epsilon
}\frac{s^{2}}{2}\text{ \ \ for }\left\vert s\right\vert \geq
\epsilon \text{ .\ }
\]%
Similarly, one can show for $s\geq 0$  that
\[
D(s)\leq s\cdot \max \left\{ \underset{0\leq \left\vert \xi \right\vert \leq 1}%
{\sup }D^{\prime }(\xi ),M\right\}.
\]%
Hence we obtain%
\[
\left( D(s)\right) ^{2}\leq D(s)s\max \left\{ \underset{0\leq
\left\vert \xi \right\vert \leq 1}{\sup }D^{\prime }(\xi
),M\right\}.
\]%
Now, considering the above relations we obtain that
\begin{equation}
\left[||D(\Dn u_t)||_{L^2(0,T;L^2(\Gamma~))}^2+||\Dn
u_t||_{L^2(0,T;L^2(\Gamma~))}^2\right] \leq C(T,\Gamma)+\max
\left\{ \frac{2}{m},\underset{0\leq \left\vert \xi \right\vert
\leq 1}{\sup }D^{\prime }(\xi ),M\right\} D_0^T \label{damp2}
\end{equation}
If we take into account \eqref{damp2} in our observability
estimate \eqref{en*} we have

\begin{align*}
(T-2\alpha)\widehat E(\tau)d\tau \le &~C
\widehat{E}(0)+C(T)D_0^T+C(T)D_0^T\int_0^T (\widehat
E(\tau))^2+K(T)
\end{align*}
where $K(T)$ does not depend on $\widehat{E}(0)$. Since
$$\widehat E(0) \le D_0^T+C\widehat E(T)+C(p,\gamma),$$
we have
\begin{align*}
(T-2\alpha-C)\widehat E(T)\le& ~ C(T)D_0^T\left[1+\int_0^T
\left(\widehat E(\tau)\right)^2d\tau\right]+K(T).
\end{align*}

Now, if we rewrite the last inequality in terms of the full
nonlinear enery $\mathscr E(T)$, use the relation \eqref{nonen}
between $\mathscr E$ and $\widehat E$, and employ the notation
$\mathscr E_M(T) \equiv \mathscr E(T)+M$, where $M$ is the
constant coming from \eqref{not1}, we obtain:
\begin{equation*}
(T-2\alpha-C)\mathscr E_M(T) \le C(T)D_0^T\left[1+\int_0^T\mathscr
E_M^2(\tau) d\tau \right]+K(T)
\end{equation*}
Using the fact that $\mathscr E_M(T) \le \mathscr E_M(0)$, and
$D_0^T =\mathscr E_M(0)-\mathscr E_M(T)$, we have
\begin{equation*}
(T-2\alpha-C)\mathscr E_M(T) \le C(T)\left[\mathscr
E_M(0)-\mathscr E_M(T)\right]\left[1+T\mathscr E^2_M(0)
\right]+K(T),
\end{equation*}
from which we obtain
\begin{equation*}
\Big[(T-2\alpha-C)+C(T)\left[1+T\mathscr E^2_M(0)
\right]\Big]\mathscr E_M(T) \le C(T)\left[1+T\mathscr E^2_M(0)
\right]\mathscr E_M(0)+K(T).
\end{equation*}
Then, assuming $T$ large enough
and recalling \eqref{nonen}, we have
\begin{align*}
\mathscr E_M(T) \le \frac{C(\widehat E(0),T)}{T+C(\widehat
E(0),T)}\mathscr E_M(0)+\frac{K(T)}{T+C(\widehat E(0),T)}
\end{align*}
Denoting ~~$\eta(\widehat E(0),T)=\frac{C(\widehat
E(0),T)}{T+C(\widehat E(0),T)}<1$, we rewrite the last inequality
as
\begin{align*}
\mathscr E_M(T) \le \eta(\widehat E(0),T)\mathscr
E_M(0)+\frac{K(T)}{T+C(\widehat E(0),T)}.
\end{align*}
Now, we can reiterate the same estimate on each subinterval
$(mT,(m+1)T)$ via the semigroup property. We note that the
constants $C(\widehat E(0),T)$ and $K(T)$ will be same at each
step. Then we obtain
\begin{align}
\mathscr E_M((m+1)T) \le & \frac{C(\widehat E(0),T)}{T+C(\widehat
E(0),T)}\mathscr E_M(mT)+\frac{K(T)}{T+C(\widehat
E(0),T)}\nonumber\\
&\leq \eta(\widehat E(0),T)^m \mathscr
E_M(0)+\sum_{i=0}^{m}\eta(\widehat E(0),T)^i
\frac{K(T)}{T+C(\widehat E(0),T)}\nonumber \\
&\leq \eta(\widehat E(0),T)^m \mathscr
E_M(0)+\Big[\frac{1}{1-\eta(\widehat E(0),T)}\Big]\frac{K(T)}{T+C(\widehat E(0),T)}\nonumber \\
&\equiv \eta(\widehat E(0),T)^m \mathscr
E(0)+\overline{K}(T)\nonumber
\end{align}
The monotonicity and continuity of the energy function, and the
fact that $\eta(\widehat E(0),T)<1$, gives
\begin{equation*}
\mathscr E_M(t) \le \overline{K}(T)+1,~~~~~~for~~ all
~~~~t>t_0(\widehat E(0)),
\end{equation*}
which together with \eqref{nonen} completes the proof of Theorem
\ref{abs-ball}.

\subsubsection{Asymptotic Smoothness---Completion of the Proof of Theorem \protect\ref{gatt}}\label{smoothnesssec}
In this section, as the second step of the proof of Theorem
\ref{gatt}, we show the asymptotic smoothness property of the
dynamical system generated by (\ref{plate}) with (HD) boundary
conditions in the following theorem:

\begin{theorem}\label{asympt}
Let Assumptions \ref{ass2} and \ref{geom} be in force. The
dynamical system $\left(S_H(\cdot), \mathscr {H}\right)$ generated
by generalized solutions to \eqref{plate} taken with (HD)
boundary conditions is {\em asymptotically smooth}.
\end{theorem}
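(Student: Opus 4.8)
The plan is to establish asymptotic smoothness through the compactness criterion used in \cite{springer,c-e-l,kh}: for each bounded, forward-invariant set $B\subset\mathscr H$ and each $\epsilon>0$ it suffices to produce a time $T$ and a functional $\Psi_{\epsilon,T}$ on $B\times B$ for which
\begin{equation*}
\|S_H(T)x^1-S_H(T)x^2\|_{\mathscr H}\le \epsilon+\Psi_{\epsilon,T}(x^1,x^2),
\end{equation*}
where $\Psi$ is asymptotically negligible in the sense that $\limsup_{n}\limsup_{m}\Psi_{\epsilon,T}(x^n,x^m)=0$ for every bounded sequence $\{x^n\}\subset B$. First I would take two generalized solutions $u^1,u^2$ with data in $B$, work with strong solutions and pass to the limit by density, and set $z=u^1-u^2$. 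Then $z$ solves the hinged plate equation with right-hand side $-\big(f_B(u^1)-f_B(u^2)\big)$ and boundary moment $\Delta z=-\big(D(\Dn u_t^1)-D(\Dn u_t^2)\big)$ on $\Gamma$, with $z=0$ on $\Gamma$. Multiplying by $z_t$ and integrating by parts gives the difference-energy identity in which the damping contribution $\int_0^T\langle D(\Dn u_t^1)-D(\Dn u_t^2),\Dn z_t\rangle_{\Gamma}$ is nonnegative by monotonicity of $D$.

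The core step is to rerun the multiplier analysis of Lemma \ref{obsineq} on the difference $z$, using the equipartition multiplier $z$ and the flux multiplier $h\cdot\nabla z$, to obtain an observability inequality of the form $T\,E_z(T)\le C\,E_z(0)+(\text{damping-difference terms})+(\text{lower-order terms})$, where $E_z=\tfrac12(\|z_t\|^2+\|\Delta z\|^2)$. Exactly as in the dissipativity proof, the troublesome higher-order trace $\Dn(\Delta z)$ is treated by returning to the equation, applying $A^{-1}$, and invoking the sharp boundary-trace estimate of Theorem \ref{sharpt} applied to $z$. The structural point that makes this work is the decomposition of the Berger difference,
\begin{equation*}
f_B(u^1)-f_B(u^2)=(\gamma-\|\nabla u^1\|^2)\Delta z-(\|\nabla u^1\|^2-\|\nabla u^2\|^2)\Delta u^2,
\end{equation*}
in the spirit of \cite[pp.~496--497]{springer}. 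The first summand has a coefficient that is uniformly bounded on $B$ and is absorbed at the level of $E_z$, while the second carries the scalar factor $\|\nabla u^1\|^2-\|\nabla u^2\|^2=(\|\nabla u^1\|+\|\nabla u^2\|)(\|\nabla u^1\|-\|\nabla u^2\|)$, which is genuinely lower order because it only sees $\nabla z$ in $L^2(\Omega)$, hence compact relative to the energy topology. The indefinite nonlinear boundary term $-\int_0^T(\gamma-\|\nabla u^1\|^2)\int_\Gamma(h\cdot\nu)|\Dn z|^2$ is again discarded by sign through the cancellation identity \eqref{relation2} (reproduced for $z$, where the pressure $p$ moreover cancels outright) together with the star-shaped Assumption \ref{geom}.

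It then remains to absorb the damping-difference terms and to verify that everything else is of compact type. Here I would use the sandwich bounds $m\le D'\le M$ for $|s|\ge1$ from Assumption \ref{ass2}, which---together with monotonicity and the $C^1$ control of $D$ near the origin---yield the comparison $\|D(\Dn u_t^1)-D(\Dn u_t^2)\|_{L^2(\Gamma)}^2+\|\Dn z_t\|_{L^2(\Gamma)}^2\le C\int_\Gamma\big(D(\Dn u_t^1)-D(\Dn u_t^2)\big)(\Dn z_t)+C$, and the right-hand integral equals $E_z(0)-E_z(T)$ modulo the (compact) nonlinear contribution. Every surviving term is then a norm or trace of $z$, $z_t$ in a topology strictly below energy level (for instance $\|z\|_{L^2(0,T;H^{2-\delta}(\Omega))}$, $\|z\|_{L^2(\Gamma)}$, and interior products of $\Delta u^2$ with $\nabla z$), each of which satisfies the double-$\limsup$ criterion by compactness of the Sobolev and trace embeddings, together with the convergence of a subsequence $S_H(\cdot)x^n\to S_H(\cdot)x$ in $C([0,T];\mathscr H)$. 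Collecting these into $\Psi_{\epsilon,T}$ and choosing $T$ large enough that the coefficient of $E_z(T)$ is positive yields the desired inequality and hence asymptotic smoothness.

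The hard part will be the bookkeeping of the higher-order boundary traces of the difference---pitting the nonlinear damping difference against $\Dn(\Delta z)$ and $\partial_{\nu}(h\cdot\nabla z)$ while keeping each estimate \emph{linear} in the damping, so that the monotonicity comparison above can close the argument. This is precisely where the special structure of the Berger nonlinearity is indispensable: the cancellation \eqref{relation2} lets the indefinite second-order boundary term be removed by sign under Assumption \ref{geom}, so that Theorem \ref{sharpt}, applied to $z$, controls the remaining second-order traces by the damping plus honestly compact lower-order quantities. Without this cancellation---as for the von Karman nonlinearity---one would not expect the surviving boundary terms to be absorbable, which is consistent with the indirect (gradient-system) route taken in \cite{springer}.
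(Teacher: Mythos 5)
Your overall architecture matches the paper's proof of Theorem \ref{asympt}: the compensated compactness criterion of Theorem \ref{psi}, the difference $z=u^1-u^2$ on an invariant bounded set, equipartition and flux multipliers for $z$ with the sharp trace estimate of Theorem \ref{sharpt} (the paper packages this multiplier step as estimate \eqref{e1}, quoting \cite[Section 10.5]{springer}, with a time cutoff $\beta$ supported in $[\alpha,T-\alpha]$ that you should also use, since \eqref{trace} is interior-in-time), the comparison of $\|\Dn z_t\|^2$ and the damping difference with $\int_\Gamma\big(D(\Dn u^1_t)-D(\Dn u^2_t)\big)\Dn z_t$ under Assumption \ref{ass2}, the splitting $f_B(u^1)-f_B(u^2)=(\gamma-\|\nabla u^1\|^2)\Delta z-(\|\nabla u^1\|^2-\|\nabla u^2\|^2)\Delta u^2$, and the final iteration on intervals of length $T$ to build $\Psi$. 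However, there is one genuine gap, and it sits at precisely the step the paper isolates as a principal contribution: you assert that the nonlinear contribution $\int_t^T(\mathcal F(z),z_t)$ appearing in the difference energy identity is ``compact,'' and that the first summand of the splitting is ``absorbed at the level of $E_z$.'' Neither claim is justified as stated. The first summand gives $(\gamma-\|\nabla u^1\|^2)(\Delta z,z_t)$, a product of two energy-level quantities; the crude bound is $C(R)E_z(t)$ with $C(R)$ large, and a term $C(R)\int_0^T E_z$ on the right-hand side of the observability inequality cannot be absorbed (there is no small parameter in front of it), nor does it satisfy the double-liminf condition required of $\Psi$. What rescues the argument is the scalar structure of the coefficient: since $z=0$ on $\Gamma$, one has $(\Delta z,z_t)=-\tfrac12\tfrac{d}{dt}\|\nabla z\|^2$, and integrating by parts \emph{in time} moves the derivative onto the slowly varying coefficient, whose derivative $\tfrac{d}{dt}\|\nabla u^1\|^2=-2(\Delta u^1,u^1_t)$ is bounded by $C(R)$; the mixed term is handled similarly via $\|\nabla u^1\|^2-\|\nabla u^2\|^2\le C(R)\|z\|_1$. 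All resulting terms are then controlled by $\epsilon\int_t^T E_z+C(\epsilon,R,T)\sup_{[t,T]}\|z\|^2_{2-\eta}$, which is exactly the paper's Lemma \ref{decomp} and the key inequality \eqref{usethis}. Without this decomposition, the quantities $\mathcal F_*$, $\mathcal F_{**}$ and your monotonicity comparison (where the damping integral ``equals $E_z(0)-E_z(T)$ modulo the nonlinear contribution'') cannot be closed, so this step must be carried out explicitly rather than asserted.

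A second, more minor, point: you invoke ``convergence of a subsequence $S_H(\cdot)x^n\to S_H(\cdot)x$ in $C([0,T];\mathscr H)$'' to verify the double-limsup condition. This is circular---strong convergence in the energy norm is essentially the asymptotic compactness you are trying to prove. What is actually available from boundedness of $\mathscr W_R$ is weak-$*$ compactness in $L^\infty(0,T;H^2(\Omega))\times L^\infty(0,T;L^2(\Omega))$ together with an Aubin--Lions argument, yielding strong convergence along subsequences in $C([0,T];H^{2-\eta}(\Omega))$ and in negative-order norms for the velocities; this weaker convergence suffices because the functional $\Psi$ is built exclusively from the lower-order quantities $\sup_{[0,T]}\|z\|^2_{2-\eta}$ and $\int_0^T\|z_t\|^2_{-\eta}$, as in the paper's construction of $\Psi_{\epsilon_0,\mathscr B,mT}$. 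With Lemma \ref{decomp} supplied and this convergence claim corrected, your proposal coincides in substance with the paper's proof.
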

We note that after Theorem \ref{asympt} is proved, our main
theorem on the existence of a global attractor for generalized
solutions to \eqref{plate} taken with (HD) conditions (Theorem
\ref{gatt}) will be proved, via Theorem \ref{0}.
\begin{proof}[Proof of Theorem \ref{asympt}]
The proof rests on the application of Theorem \ref{psi}. Hence we
are interested in the difference of two solutions $z=u-w$, where $%
U(t)=(u(t),u_{t}(t))=S_{H}(t)y_{1}$ and
$W(t)=(w(t),w_{t}(t))=S_{H}(t)y_{2}$ solve (\ref{plate})
corresponding to initial conditions $y_{1}$ and $y_{2}$
(respectively), taken from an invariant, bounded set. Then, $z$
will solve the following problem:

\begin{align}
z_{tt}+\Delta^2 z +\mathcal{F}(z)= 0~~ \text{ in } (0,T)\times
\Omega; \label{diffplate} & ~~~~~z=0,~~\Delta z=-[D(\partial _{\nu
}u_{t})-D(\partial _{\nu }w_{t})]~\mathrm{on}~~\Gamma,
\\z(0)=u_0-w_0,~~z_t(0)=u_1-w_1
, \notag
\end{align}
where $ \mathcal F(z)\equiv f_B(u)-f_B(w)$. We denote the energy
for the difference: ~ $\displaystyle
E_{z}(t)=\frac{1}{2}\big\{||\Delta z||^2+||z_{t}(t)||^{2}\big\}.$

\noindent The energy identity \eqref{enh}, the relation
\eqref{nonen} between the energies, and the bound on lower
frequencies \eqref{pibound}, give that there exists an $R_*$ such
that the set
$$\mathscr W_R \equiv \{(u_0,u_1) \in \mathscr H~:~\mathscr E(u_0,u_1) \le R\}$$ is a non-empty bounded set in $\mathscr H$ for all $R \ge R_*$. Moreover, any bounded set $\mathscr B \subset \mathscr H$ is contained in $\mathscr W_R$ for some $R$, and the set $\mathscr W_R$ is invariant with respect to $S_H(t)$.
Then, we consider the restriction of the dynamical
system $(S_H(\cdot), \mathscr {H})$ to $(S_H(\cdot), \mathscr
W_R)$ in showing the asymptotic smoothness property, and thus we
consider the solutions $u,w$ satisfying
\begin{equation*}
||u(t)||_{2}+||u_{t}(t)||_{0}+||w(t)||_{2}+||w_{t}(t)||_{0}\leq
C(R),~~t>0.
\end{equation*}
The main ingredient will be the following estimate on the
difference of the solutions:

\begin{lemma}
Let $T>0$ and $\beta \in C^2(\mathbb R)$ be a given function
satisfying (i) $\text{supp}(\beta) \subset [\alpha, T-\alpha]$
(with $\alpha < T/2$); (ii) $0\le \beta \le 1$ and $\beta \equiv
1$ on $[\alpha, T-\alpha]$. Then, any solution $z$ to equation
\eqref{diffplate} satisfies
\begin{align}
\int_0^T E_z(t)\beta(t)dt\leq &~ C_1 \int_0^T
E_z(t)|\beta'(t)|dt+C_2(T)\left\{\int_0^T \int_{\Gamma~}
\Big[D(\partial _{\nu }u_{t})-D(\partial _{\nu
}w_{t})\Big]\partial _{\nu }z_t+|\partial _{\nu }z_t|^2d\Gamma\right\}\nonumber\\
+& l.o.t.(z),\label{e1}
\end{align}
where $$l.o.t.(z)\equiv
C(T,R)\Big[\sup_{[0,T]}||z(t)||_{2-\eta}^{2} +
\int_0^T||z_t(\tau)||_{-\eta}^2 d\tau\Big],~~0<\eta <1/2.$$
\end{lemma}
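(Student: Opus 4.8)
The plan is to reproduce, for the difference equation \eqref{diffplate}, the multiplier scheme of Lemma \ref{obsineq}, now localized in time by the cutoff $\beta$. First I would test \eqref{diffplate} against the equipartition multiplier $\beta z$ and the flux multiplier $\beta\, h\cdot\nabla z$ (with $h=\mathbf{x}-\mathbf{x}_0$ as in Assumption \ref{geom}), integrate over $(0,T)\times\Omega$, and apply Green's theorem. Integrating the inertial term by parts in time produces the endpoint contributions $\beta(z_t,z)\big|_0^T$ and $\beta(z_t,h\cdot\nabla z)\big|_0^T$, which vanish since $\beta$ is supported away from $t=0,T$, together with interior terms carrying $\beta'$; bounding the latter by $(z_t,z)\le CE_z$ and $(z_t,h\cdot\nabla z)\le CE_z$ yields exactly the $C_1\int_0^T E_z|\beta'|$ term. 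Because $z=0$ on $\Gamma$ for all $t$, we have $z_t=0$ and $h\cdot\nabla z=(h\cdot\nu)\partial_\nu z$ on $\Gamma$, which kills several boundary terms; taking a suitable combination of the two multiplier identities (as in the passage \eqref{mult1}--\eqref{prelimid}) leaves $c\int_0^T\beta E_z$ on the left, up to the higher-order boundary traces generated by the biharmonic operator.

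The structural heart of the argument is the decomposition of the nonlinear difference, as on pp.~496--497 of \cite{springer}: writing
\[
\mathcal F(z)=f_B(u)-f_B(w)=(\gamma-\|\nabla u\|^2)\Delta z+(\|\nabla w\|^2-\|\nabla u\|^2)\Delta w .
\]
Since $\|\nabla w\|^2-\|\nabla u\|^2=-(\nabla z,\nabla(u+w))$ is bounded by $C(R)\|\nabla z\|$ and $\Delta w$ is bounded in $L^2$ on the invariant set $\mathscr W_R$, the second summand is genuinely compact and, after pairing with $z$ and $h\cdot\nabla z$, is absorbed into $l.o.t.(z)$. The first summand is the principal Berger term: pairing $(\gamma-\|\nabla u\|^2)\Delta z$ against $h\cdot\nabla z$ and integrating by parts generates the boundary contribution $(\gamma-\|\nabla u\|^2)\int_\Gamma(h\cdot\nu)|\partial_\nu z|^2$, while its interior remainder involves only $\|\nabla z\|$ and is lower order.

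Next, exactly as in \eqref{thiss}--\eqref{hat1}, I would return to the equation to dispatch the remaining higher-order trace $\partial_\nu(\Delta z)$. Applying $A^{-1}$ (with $A=\Delta$ on $H^2\cap H_0^1$) gives $\Delta z=-A^{-1}z_{tt}+D_L(\Delta z|_\Gamma)-A^{-1}\mathcal F(z)$, and the crucial cancellation is that $A^{-1}\big[(\gamma-\|\nabla u\|^2)\Delta z\big]=(\gamma-\|\nabla u\|^2)z$, so the nonlinear contribution to $\int_\Gamma\partial_\nu(\Delta z)(h\cdot\nabla z)$ is again $(\gamma-\|\nabla u\|^2)\int_\Gamma(h\cdot\nu)|\partial_\nu z|^2$. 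Combining with the flux-multiplier boundary term isolates $-\int_0^T\|\nabla u\|^2\int_\Gamma(h\cdot\nu)|\partial_\nu z|^2\le 0$, which is discarded via Assumption \ref{geom}, plus a $\gamma$-term that is lower order since $\|\partial_\nu z\|_{L^2(\Gamma)}\le C\|z\|_{2-\eta}$. The term $\partial_\nu(A^{-1}z_{tt})$ is integrated by parts in time, and $\partial_\nu D_L(\Delta z|_\Gamma)$ is estimated through $\Delta z|_\Gamma=-[D(\partial_\nu u_t)-D(\partial_\nu w_t)]$ and the mapping properties of $D_L$, which is where the damping-difference trace enters.

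The main obstacle---and the place where Theorem \ref{sharpt} is indispensable---is controlling the second-order boundary traces $\partial_{\nu\nu}z,\partial_{\tau\nu}z$ appearing through $\partial_\nu(h\cdot\nabla z)$ (see \eqref{512}) and through $\|h\cdot\nabla z\|_{H^1(\Gamma)}$. Applying Theorem \ref{sharpt} to $z$ bounds these by $\|\mathcal F(z)\|_{L^2(0,T;H^{-s_0})}+\|z\|_{L^2(0,T;H^{2-\delta})}+\|D(\partial_\nu u_t)-D(\partial_\nu w_t)\|_{L^2(0,T;L^2(\Gamma))}+\|\partial_\nu z_t\|_{L^2(0,T;L^2(\Gamma))}$. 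The decisive point is that $\|\mathcal F(z)\|_{-s_0}\le C(R)\|z\|_{2-\eta}$ for $\eta\le s_0<1/2$, so the nonlinearity enters only \emph{below} the energy level and is relegated to $l.o.t.(z)$, while the damping-difference trace and $\|\partial_\nu z_t\|_{L^2(\Gamma)}$ are precisely the boundary quantities retained in \eqref{e1}; the pairing $\int_0^T\int_\Gamma[D(\partial_\nu u_t)-D(\partial_\nu w_t)]\partial_\nu z_t$ is just the integrated dissipation of $E_z$ coming from the energy identity for \eqref{diffplate}. Choosing $\epsilon$ small to absorb the $\epsilon\int_0^T\beta E_z$ terms into the left-hand side, and collapsing all residual $\|z\|_{2-\eta}$ and $\|z_t\|_{-\eta}$ quantities into $l.o.t.(z)$, yields \eqref{e1}. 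I expect the delicate bookkeeping of the higher-order traces, and verifying that the Berger cancellation survives at the level of differences, to be the technically hardest part.
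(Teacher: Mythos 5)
Your proposal is correct and follows essentially the same route as the paper, whose own ``proof'' is only a four-line citation to \cite[Section 10.5]{springer} together with the techniques of the preceding section: the flux multiplier $h\cdot\nabla(\beta z)$ (identical to your $\beta\,h\cdot\nabla z$ since $\beta$ depends only on $t$), the sharp trace estimates of \cite{sharptrace} applied to the difference equation, the local Lipschitz property of the nonlinearity measured below the energy level, and Assumption \ref{ass2}. Your expansion of the sketch---in particular the decomposition $\mathcal F(z)=(\gamma-\|\nabla u\|^2)\Delta z+(\|\nabla w\|^2-\|\nabla u\|^2)\Delta w$, the $A^{-1}$-cancellation yielding the discardable signed boundary term $-\int_0^T\|\nabla u\|^2\int_\Gamma(h\cdot\nu)|\partial_\nu z|^2$, and the observation that $\|\mathcal F(z)\|_{-s_0}\le C(R)\|z\|_{2-\eta}$ relegates the nonlinearity to $l.o.t.(z)$ in the trace estimate---is exactly the adaptation of Lemma \ref{obsineq} that the paper's citation-style argument presupposes.
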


\begin{proof}
The proof of this lemma follows \cite[Section 10.5]{springer}
exactly, with respect to the linear portion of the dynamics and the
nonlinear damping: the application of the flux multiplier $h\cdot
\nabla (\beta z)$, where $h=\xb-\xb_0$, $\xb_0\in \mathbb{R}^2$,
the use of sharp trace results proved in \cite{sharptrace}
(as in
 the previous section), the local Lipschitz
property of $f$, and the employment of Assumption \ref{ass2}. This yields the above estimate \eqref{e1}.
\end{proof}

\noindent Now, by the energy relation,
\begin{equation}
E_z(T)+\int_t^T\int_{\Gamma~} \left[D(\partial_{\nu}
u_t)-D(\partial_{\nu}w_t)\right](\partial_{\nu}z_t)+\int_t^T(\mathcal
F(z),z_t)=E_z(t),~~T\ge t,\label{enz}
\end{equation}
we have:
\begin{enumerate}\item[(i)] \begin{align*}\int_0^T E_z(t)(1-\beta(t)+|\beta'(t)|)dt\le &\\
\int_0^T\big[1-\beta(t)+|\beta'(t)|\big]&dt\Big[E_z(T)+\int_0^T\int_{\Gamma~}
\left[D(\partial_{\nu}
u_t)-D(\partial_{\nu}w_t)\right](\partial_{\nu}z_t)\Big]+\mathcal
F_*(z),\end{align*} where
$$\mathcal{F}_*(z) \equiv \int_0^T(1-\beta(t)-|\beta'(t)|)\int_t^T(\mathcal F(z),z_t) d\tau dt$$

\item[(ii)] $$TE_z(T)\leq \int_0^T E_z(t)dt +\mathcal F_{**}(z),$$where
$$\mathcal F_{**}(z)\equiv -\int_0^T\int_t^T(\mathcal F(z),z_t)
d\tau dt.$$ \end{enumerate}

\noindent Combining (i) and (ii), we obtain that there exist
$T_0>0$, and constants $C_1(T)$ and $C_2(R,T)$, such that
\begin{align}
TE_{z}(T)+\int_0^TE_z(t) dt \leq &~
C_1(T)\left[\int_0^T\int_{\Gamma~} \left[D(\partial_{\nu}
u_t)-D(\partial_{\nu}w_t)\right](\partial_{\nu}z_t)d\Gamma~dt+
\int_0^T\int_{\Gamma~}\left|\partial_{\nu}z_t\right|^2  d\Gamma~dt\right] \nonumber\\
&+\mathcal F_*(z)+\mathcal F_{**}(z)+C_2(R,T)l.o.t.(z),~~\forall ~~T\geq T_0.\label{diff-est1} \end{align}%
 Using the assumption on the structure of the damping (Assumption \ref{ass2}), we have
$$\int_0^T\int_{\Gamma~} \left| \partial_{\nu}z_t \right|^2 \le \epsilon +C(\epsilon)
\int_0^T\int_{\Gamma~} \left[D(\partial_{\nu}
u_t)-D(\partial_{\nu}w_t)\right](\partial_{\nu}z_t),
~~\forall~~\epsilon>0.$$ On the other hand, it follows immediately
from the energy relation \eqref{enz} that
$$\int_0^T \int_{\Gamma~} \left[D(\partial_{\nu} u_t)-D(\partial_{\nu}w_t)\right](\partial_{\nu}z_t)
\le E_z(0)-E_z(T) +\left|\int_0^T(\mathcal F(z),z_t)\right|.$$
Now, taking into account the last two inequalities in
\eqref{diff-est1}, we obtain
\begin{align}
TE_{z}(T)+\int_0^TE_z(t)dt \leq &~\epsilon+ C_1(\epsilon,T)\left[E_z(0)-E_z(T) \right]+C_2(\epsilon,T)\left|\int_0^T(\mathcal F(z),z_t)\right| \nonumber\\
&+\mathcal F_*(z)+\mathcal
F_{**}(z)+C_3(\epsilon,T,R)l.o.t.(z)\label{diff-est} \end{align}
for any $\epsilon>0$.

In order to apply Theorem \ref{psi}, we need to simplify the
nonlinear terms above. For this, we demonstrate a decomposition of
Berger nonlinearity for the difference of two solutions.
\begin{lemma}\label{decomp}
Let $z=u-w$, and let $f_B(u)=(\gamma -||\nabla u||^2)\Delta u$,
and $\mathcal F(z) = f_B(u)-f_B(w)$. Also assume $u,w \in
C([t,T];(H^2\cap H_0^1)(\Omega))\cap C^1([t,T];L^2(\Omega))$. Then
\begin{align*}
\int_t^T (\mathcal F(z),z_t )_{\Omega} d\tau  =&\Big[( \mathcal
F(z),z)_{\Omega} +\dfrac{\gamma}{2}||\nabla
z||^2-\dfrac{1}{2}||\nabla u||^2||\nabla z||^2 -(||\nabla
u||^2-||\nabla w||^2)( \Delta w,z )_{\Omega}
\Big]_t^T\\&-\int_t^T(||\nabla u||^2-||\nabla w||^2)( \Delta w,z_t
)_{\Omega} d\tau +\int_t^T(\Delta u,u_t )_{\Omega}||\nabla z||^2
d\tau
\end{align*}
\end{lemma}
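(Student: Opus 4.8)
The plan is to prove the identity by elementary integration by parts in time and space, exploiting that under (HD) both $u$ and $w$ vanish on $\Gamma$, so that $z=u-w$ and $z_t$ vanish there as well. I would first record the purely algebraic decomposition of the nonlinearity: substituting $\Delta u=\Delta z+\Delta w$ into $f_B(u)-f_B(w)$ and collecting terms gives
\begin{equation*}
\mathcal F(z) = (\gamma-||\nabla u||^2)\,\Delta z - (||\nabla u||^2-||\nabla w||^2)\,\Delta w ,
\end{equation*}
which already exposes the two structural pieces appearing on the right-hand side of the claim. Since the manipulations involve $z_t$ (and time derivatives of $||\nabla u||^2$), I would first carry them out on strong solutions, where all the pairings are classical, and then pass to generalized solutions by the density/limit procedure used in Section \ref{freeBC}. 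This mirrors the analogous decomposition for non-rotational von Karman dynamics in \cite[pp. 496--497]{springer}; the point is that Berger's nonlinearity keeps the coefficient $\gamma-||\nabla u||^2$ \emph{scalar}, so the integration by parts closes without the elliptic (Airy) solver needed for von Karman.

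Next I would split $\int_t^T(\mathcal F(z),z_t)_\Omega\,d\tau$ according to the decomposition. The contribution of the $\Delta w$ piece is simply retained as the integral $-\int_t^T(||\nabla u||^2-||\nabla w||^2)(\Delta w,z_t)_\Omega\,d\tau$, matching the claimed integral term. For the principal piece $\int_t^T(\gamma-||\nabla u||^2)(\Delta z,z_t)_\Omega\,d\tau$ I would use Green's identity together with $z_t|_\Gamma=0$ to write $(\Delta z,z_t)_\Omega=-(\nabla z,\nabla z_t)_\Omega=-\tfrac12\tfrac{d}{d\tau}||\nabla z||^2$, and then integrate by parts in time. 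The endpoint (boundary-in-time) contribution produces the quadratic expression $-\tfrac12\big[(\gamma-||\nabla u||^2)||\nabla z||^2\big]_t^T$, while the interior term transfers the time derivative onto the coefficient $\gamma-||\nabla u||^2$.

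This last time derivative is the crucial step: $\tfrac{d}{d\tau}||\nabla u||^2=2(\nabla u,\nabla u_\tau)_\Omega=-2(\Delta u,u_\tau)_\Omega$, where the second equality again uses $u_\tau|_\Gamma=0$. This is exactly what generates the term $\int_t^T(\Delta u,u_t)_\Omega||\nabla z||^2\,d\tau$ in the statement, and it is the one output of the decomposition that is \emph{not} manifestly lower order. Its eventual control is not internal to this lemma: it relies on the uniform bound on the invariant set $\mathscr W_R$ (so that $(\Delta u,u_t)$, being $\tfrac12\tfrac{d}{dt}||\nabla u||^2$, is integrable in time against the small factor $||\nabla z||^2$). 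Recognizing that this term must be produced, rather than hidden, is what makes the time integration by parts the right maneuver.

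The main obstacle is bookkeeping rather than analysis: the several time-endpoint quadratic terms must be grouped correctly against $(\mathcal F(z),z)_\Omega$, with careful attention to signs and to the fact that $(\mathcal F(z),z)_\Omega$ itself already encodes $-(\gamma-||\nabla u||^2)||\nabla z||^2-(||\nabla u||^2-||\nabla w||^2)(\Delta w,z)_\Omega$ (via $(\Delta z,z)_\Omega=-||\nabla z||^2$ and $z|_\Gamma=0$). Accordingly, I would finish by expanding $(\mathcal F(z),z)_\Omega$ explicitly and matching coefficients against the endpoint term $-\tfrac12\big[(\gamma-||\nabla u||^2)||\nabla z||^2\big]_t^T$ actually produced above; this coefficient verification is the delicate point of the proof, since a sign or factor slip in grouping the endpoint terms is the most likely source of error.
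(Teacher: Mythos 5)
Your proposal is correct, and it takes a genuinely cleaner route than the paper's. The paper first integrates by parts in time on the full nonlinear term $-\big(\,[\,||\nabla u||^2\Delta u-||\nabla w||^2\Delta w\,],z_t\big)$, producing the endpoint $(\mathcal F(z),z)+\tfrac{\gamma}{2}||\nabla z||^2$, and only then regroups by adding and subtracting mixed terms and recognizing total derivatives. You instead decompose algebraically at the outset, $\mathcal F(z)=(\gamma-||\nabla u||^2)\Delta z-(||\nabla u||^2-||\nabla w||^2)\Delta w$, so that the $(\Delta w,z_t)$ integral appears verbatim and a single time integration by parts on the scalar-coefficient principal piece yields both the endpoint quadratic and the term $\int_t^T(\Delta u,u_t)||\nabla z||^2\,d\tau$ via $\tfrac{d}{d\tau}||\nabla u||^2=2(\nabla u,\nabla u_t)=-2(\Delta u,u_t)$, exactly as you describe. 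Your strong-solutions-plus-density reduction matches the paper's stated procedure, and your remarks about where the $(\Delta u,u_t)$ term is ultimately controlled (on the invariant set $\mathscr W_R$, against the key inequality \eqref{usethis}) are accurate.

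One substantive point: the coefficient matching you defer at the end does \emph{not} close against the statement as printed---and this is not your error. Carrying out your computation gives the endpoint bracket $\big[\tfrac12||\nabla u||^2||\nabla z||^2-\tfrac{\gamma}{2}||\nabla z||^2\big]_t^T$, whereas expanding the stated bracket using $(\mathcal F(z),z)=-(\gamma-||\nabla u||^2)||\nabla z||^2-(||\nabla u||^2-||\nabla w||^2)(\Delta w,z)$ leaves an unmatched extra term $-2(||\nabla u||^2-||\nabla w||^2)(\Delta w,z)\big|_t^T$. The correct identity carries $+(||\nabla u||^2-||\nabla w||^2)(\Delta w,z)$ in the bracket: the paper's own proof slips a sign at the step ``recognizing a total derivative,'' since
$$\tfrac{d}{d\tau}\Big[\big(||\nabla u||^2-||\nabla w||^2\big)(\Delta w,z)\Big]=\big[(||\nabla u||^2)'-(||\nabla w||^2)'\big](\Delta w,z)+\big(||\nabla u||^2-||\nabla w||^2\big)(\Delta w_t,z)+\big(||\nabla u||^2-||\nabla w||^2\big)(\Delta w,z_t)$$
forces the total derivative to enter with a plus, not a minus. (A one-dimensional check with $u=\phi$, $w=\lambda(\tau)\phi$, $||\nabla\phi||=1$ confirms the plus sign.) The discrepancy is harmless downstream: the endpoint term is lower order, bounded by $C(R)||z||_1\,||z||_0\le C(R)\sup||z||^2_{2-\eta}$, and only its absolute value enters \eqref{usethis}, so the asymptotic smoothness argument is unaffected. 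To make your proof complete you should execute the matching explicitly rather than leaving it as the ``delicate point''; doing so both finishes the argument and exposes this benign sign typo in Lemma \ref{decomp}.
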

\begin{proof}
Letting $z=u-w$, we note
\begin{align*}
\int_t^T ( \mathcal F(z),z_t ) d\tau =&~-\int_t^T ( [||\nabla u||^2\Delta u-||\nabla w||^2\Delta w],z_t ) d\tau +\gamma \int_t^T ( \Delta z,z_t ) d\tau \\
=& \int_t^T ( [||\nabla u||^2\Delta u]'-[||\nabla w||^2\Delta
w]',z) d\tau  +\left[( \mathcal F(z),z) +\dfrac{\gamma}{2}||\nabla
z||^2\right]_t^T,\nonumber
\end{align*}
where we have integrated by parts in time, integrated by parts in
space on the term with $\gamma$, and recognized the total
derivative term $( \nabla z, \nabla z_t )$. We now focus on the
nonlinear term and observe
\begin{align*}
\int_t^T ( [||\nabla u||^2\Delta u]'-[||\nabla w||^2\Delta w]',z)
d\tau =& \int_t^T( (||\nabla u||^2)'\Delta u - (||\nabla
w||^2)'\Delta w,z ) d\tau \\ \nonumber &+\int_t^T( ||\nabla
u||^2\Delta u_t-||\nabla w||^2\Delta w',z) d\tau.
\end{align*}
If we add and subtract the terms which are mixed in $u$ and $w$ we
have
\begin{align*}
\int_t^T \big( [||\nabla u||^2\Delta u]'&-[||\nabla w||^2\Delta w]',z\big) d\tau =\\
& \int_t^T\Big[(||\nabla u||^2)'( \Delta z,z) +\big[(||\nabla
u||^2)'-(||\nabla w||^2)' \big]( \Delta w,z) \Big]d\tau\nonumber
\\\nonumber &+\int_t^T\Big[||\nabla u||^2( \Delta z_t,z
)+[||\nabla u||^2-||\nabla w||^2]( \Delta w_t,z) \Big]d\tau.
\end{align*}

Now, recognizing a total derivative and adjusting we arrive at
\begin{align*}
\int_t^T \big( [||\nabla u||^2\Delta u]'&-[||\nabla w||^2\Delta
w]',z\big) d\tau  \\ =&-
\int_t^T\Big[\dfrac{d}{d\tau}\big[||\nabla u||^2||\nabla
z||^2\big]-\dfrac{1}{2}||\nabla u||^2(||\nabla
z||^2)'\Big]d\tau\\& -\int_t^T\Big[\dfrac{d}{d\tau}\big[(||\nabla
u||^2-||\nabla w||^2)( \Delta w, z )\big]+(||\nabla u||^2-||\nabla
w||^2)( \Delta
w, z_t) \Big]d\tau\\
=& -\Big[\dfrac{1}{2}||\nabla u||^2||\nabla z||^2 +(||\nabla
u||^2-||\nabla w||^2)( \Delta w,z ) \Big]_t^T\\
& -\int_t^T(||\nabla u||^2-||\nabla w||^2)( \Delta w,z_t ) d\tau
+\int_t^T(\Delta u,u_t )||\nabla z||^2 d\tau\nonumber
\end{align*}
which is the desired decomposition.
\end{proof}
\noindent At this point, noting that \begin{equation*}
||u(t)||_{2}+||u_{t}(t)||_{0}+||w(t)||_{2}+||w_{t}(t)||_{0}\leq
C(R),~~t>0,
\end{equation*}
using triangle inequality
\begin{align*}\left|~||\nabla u||^2-||\nabla w||^2~\right|=&~\Big|||\nabla u||-||\nabla w||\Big|\left(||\nabla u||+||\nabla w||\right)\\
\le &~||\nabla u - \nabla w||\left(||\nabla u||+||\nabla w||\right)\\
\le &~C(R)||z||_1,
\end{align*}
and taking into account the last two inequalities in Lemma
\ref{decomp} we obtain the {\em key inequality}:
\begin{equation}\label{usethis}\left|\int_t^T(\mathcal F(z),z_t)d\tau \right| \le \epsilon \int_t^TE_z(\tau)d\tau+C(\epsilon, R,T)\sup_{[t,T]} ||z||^2_{2-\eta},~~\eta>0.\end{equation}
From this, the analogous bounds on $\mathcal F_*$ and $\mathcal
F_{**}$ follow. \vspace{0.25cm}

\noindent\textbf{\em{Completion of the Proof of Theorem
\ref{asympt}--Asymptotic Smoothness}} \vspace{0.25cm}

\noindent We are now in a position to finish the proof of Theorem
\ref{asympt}. Firstly, implementing \eqref{usethis} in
\eqref{diff-est} and rescaling $\epsilon$, we see that
\begin{equation} E_z(T) \le \epsilon
+C_1(\epsilon,T)[E_z(0)-E_z(T)]+l.o.t.(z).
\end{equation}

\noindent In order to invoke Theorem \ref{psi} we need only to
construct a functional $\Psi _{\epsilon ,R,T}$ satisfying the
compensated compactness condition \cite{springer,kh}, i.e.,
\begin{equation*}
\underset{k\rightarrow \infty }{\lim \inf
}~~\underset{n\rightarrow \infty }{ \lim \inf }~\Psi _{\epsilon
,R,T}(y_{n},y_{k})=0
\end{equation*}
for every sequence $\{y_{n}\}$ from $\mathscr B \subset \mathscr
H$. For this, we use a standard stabilization type argument via
the semigroup property of the dynamics on the interval
$((m-1)T,mT)$. This yields
$$E_z(mT) \le \dfrac{\epsilon}{1+C(\epsilon,R,T)}+\gamma_{\epsilon}E_z((m-1)T)+l.o.t.^m(z),$$ where
$$\gamma_{\epsilon} = \dfrac{C(\epsilon, R,T)}{1+C(\epsilon,R,T)}<1$$ and
$$l.o.t.^m(z) = C(\epsilon,R,T)\Big[\sup_{[(m-1)T,mT]} ||z(\tau)||_{2-\eta}^2 +\int_{(m-1)T}^{mT}||z_t(\tau)||_{-\eta}^2 d\tau\Big].$$
After iteration, we observe that
$$E_z(mT) \le \epsilon+\gamma_{\epsilon}^mE_z(0)+\sum_{j=0}^{m-1}\gamma_{\epsilon}^j l.o.t.^{m-j}(z).$$
For any $\epsilon_0$, we may select $\epsilon>0$, then $m$ large
enough so that, with $R$ and $T$ fixed, we have
$$E_z(mT) \le \epsilon_0+\sum_{j=0}^{m-1}\gamma_{\epsilon}^j l.o.t.^{m-j}(z).$$
Now, for $y_1=U(0)$ and $y_2=W(0)$, setting
\begin{equation*} \Psi_{\epsilon_0 ,\mathscr{B},mT}(y_1, y_2) \equiv
\sum_{j=0}^{m-1}\gamma_{\epsilon}^j l.o.t.^{m-j}(z)
\end{equation*}
the functional $\Psi_{\epsilon_0 ,\mathscr B,mT}$---being compact
with respect to the energy space $\mathscr H$ (via the Sobolev
embeddings)---satisfies the requisite condition of the functional
$\Psi$ in Theorem \ref{psi}. This concludes the proof
of asymptotic smoothness of the dynamical system
$(S_H(\cdot),\mathscr W_R)$ for any $R>0$, according to Theorem
\ref{psi}.
\end{proof}
As a result, by Theorem \ref{abs-ball} and Theorem \ref{asympt} we
obtain the existence of global attractor which finishes the proof
of Theorem \ref{gatt}.

\section{Acknowledgements}
The authors would like to thank Professors Azer Khanmamedov and
Irena Lasiecka for their (respective) very helpful advice and
suggestions. The authors also thank the referee for valuable suggestions which improved this treatment.

J.T. Webster was partially supported by NSF-DMS-1504697 in
performing this research.

\section{Appendix}

\subsection{Long-time Behavior of Dynamical Systems}

In the context of this paper we will use a few keys theorems
(which we now formally state) to prove the existence of the
attractor and determine its properties, as well as provide some
context for other results mentioned in the discussions above. For
general dynamical systems references, see
\cite{babin1,miranville,raugel,springer} (and refences therein).
For proofs pertinent to what is presented here, and more
references, see \cite{springer}.

 Let $(%
\mathcal{H},S_t)$ be a dynamical system on a complete metric space $\cH$  with $\mathscr{N}\equiv \{x\in%
\mathcal{H}:S_tx=x~\text{ for all }~ t \ge 0\}$ the set of its
stationary points. $(\cH, S_t) $ is said to be dissipative iff it
possesses a bounded absorbing set $\mathcal B$. This is to say
that for any bounded set $B$, there is a time $t_B$ so that
$S_{t_B}(B)\subset \mathcal B$. We say that a dynamical system is
\textit{asymptotically compact} if there exists a compact set $K$
which is uniformly attracting: for any bounded set $ D\subset
\mathcal{H}$ we have that $\displaystyle~
\lim_{t\to+\infty}d_{\mathcal{H}}\{S_t D|K\}=0$~ in the sense of
the Hausdorff semidistance. $(\mathcal{H},S_t)$ is said to be
\textit{asymptotically smooth} if for any bounded, forward
invariant $(t>0) $ set $D$ there exists a compact set $K \subset
\overline{D}$ which is uniformly attracting (as in the previous definition).
\textit{Global attractor} $\mathbf{A}$ is a closed, bounded set in $%
\mathcal{H}$ which is invariant (i.e. $S_t\mathbf{A}=\mathbf{A}$ for all $%
t>0 $) and uniformly attracting.

The following   {\it if and only if}  characterization of global
attractors  is well-known \cite{babin1,springer}
\begin{theorem}\label{0}
Let $(\cH, S_t)$ be a dissipative dynamical system in a complete
metric space  $\cH$. Then $(\cH, S_t ) $ possesses a compact
global attractor $\mathbf{A} $ if and only if $(\cH, S_t) $ is
asymptotically smooth.
\end{theorem}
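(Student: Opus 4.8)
The plan is to prove both implications, bearing in mind that dissipativity of $(\cH,S_t)$ is a standing hypothesis. The ``only if'' direction (a compact global attractor forces asymptotic smoothness) is the routine one, whereas the ``if'' direction (asymptotic smoothness, together with dissipativity, produces the attractor) carries the real content. Throughout I will use the $\omega$-limit set of a set $D$, namely $\omega(D)=\bigcap_{s\ge 0}\overline{\bigcup_{t\ge s}S_t D}$, as the central construct.

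For the ``only if'' direction, I would start from the global attractor $\mathbf{A}$ and an arbitrary bounded, forward-invariant set $D$. Since $D$ is forward invariant, $\omega(D)\subseteq\overline{D}$. Because $\mathbf{A}$ uniformly attracts the bounded set $D$ and is compact, one shows $\omega(D)\subseteq\mathbf{A}$, so $\omega(D)$ is a closed subset of a compact set and hence compact. That $\omega(D)$ uniformly attracts $D$ follows by a standard contradiction argument: if attraction failed, one could extract $S_{t_n}x_n$ with $x_n\in D$ and $t_n\to\infty$ staying a fixed distance from $\omega(D)$, then use attraction by $\mathbf{A}$ and its compactness to pass to a convergent subsequence whose limit must lie in $\omega(D)$. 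Taking $K=\omega(D)$ then verifies asymptotic smoothness.

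For the ``if'' direction, the first step is to upgrade the given bounded absorbing set $\mathcal{B}$ to one that is also closed and forward invariant. Since $\mathcal{B}$ absorbs itself, there is $t_1$ with $S_t\mathcal{B}\subseteq\mathcal{B}$ for $t\ge t_1$; setting $\tilde{\mathcal{B}}=\overline{\bigcup_{t\ge t_1}S_t\mathcal{B}}$ yields a bounded (it sits inside $\overline{\mathcal{B}}$), closed, forward-invariant absorbing set, where forward invariance uses continuity of each $S_s$. Applying asymptotic smoothness to $\tilde{\mathcal{B}}$ produces a compact $K\subseteq\tilde{\mathcal{B}}$ uniformly attracting $\tilde{\mathcal{B}}$; since $\tilde{\mathcal{B}}$ absorbs every bounded set, a routine estimate $\operatorname{dist}(S_t B,K)=\operatorname{dist}(S_{t-t_B}(S_{t_B}B),K)\to 0$ shows $K$ uniformly attracts all bounded sets, i.e. the system is asymptotically compact.

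With asymptotic compactness in hand I would define the candidate attractor as $\mathbf{A}=\omega(\tilde{\mathcal{B}})$. Nonemptiness and compactness follow from $\omega(\tilde{\mathcal{B}})\subseteq K$ together with closedness of $\omega$-limit sets, and uniform attraction of every bounded set follows because $\mathbf{A}$ attracts $\tilde{\mathcal{B}}$ and $\tilde{\mathcal{B}}$ absorbs everything. The main obstacle is the invariance $S_t\mathbf{A}=\mathbf{A}$: the inclusion $S_t\mathbf{A}\subseteq\mathbf{A}$ is immediate, but the reverse requires, for each $x\in\mathbf{A}$, a preimage $y\in\mathbf{A}$ with $S_t y=x$. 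Here one writes $x=\lim_n S_{t_n}x_n$ with $x_n\in\tilde{\mathcal{B}}$ and $t_n\to\infty$, considers $S_{t_n-t}x_n$ for large $n$, and invokes asymptotic compactness to extract a subsequence converging to some $y\in\mathbf{A}$, after which continuity of $S_t$ gives $S_t y=x$. This compactness-plus-continuity extraction, iterated to build bounded complete trajectories, is the technical heart of the argument and is precisely where both asymptotic smoothness and the completeness of $\cH$ are essential.
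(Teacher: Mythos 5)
Your proposal is correct. The paper gives no proof of Theorem \ref{0} at all---it is stated as a well-known result with citations to \cite{babin1,springer}---and your argument (taking $K=\omega(D)$ for the routine ``only if'' direction, and for the converse upgrading the absorbing set to a closed forward-invariant $\tilde{\mathcal{B}}$, setting $\mathbf{A}=\omega(\tilde{\mathcal{B}})$, and obtaining the backward inclusion $\mathbf{A}\subseteq S_t\mathbf{A}$ by the compactness-plus-continuity extraction along $S_{t_n-t}x_n$) is precisely the standard proof found in those references.
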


 An {\it asymptotically smooth}
dynamical system for which there is a Lyapunov function  $\Phi(x)
$  that is  bounded from above  on any bounded set
 can be thought of as one which possesses \textit{local
attractors}. To see this stated precisely see \cite{ch-l} page 33.
Such a result provides an existence of {\it local attractors},
i.e., and attractor for any bounded set of initial data. However,
these sets need not be uniformly bounded  with respect to the size
of the set of initial data.
 The latter can be guaranteed by the existence of a uniform absorbing  set.
However, establishing this existence of an absorbing set may be
technically demanding. In some instances, there is a way of
circumventing this difficulty which takes advantage of the ``good"
structure of a Lyapunov function.

A \textit{strict Lyapunov function} for $(\mathcal{H},S_t)$ is a functional $%
\Phi$ on $\mathcal{H}$ such that (i) the map $t \to \Phi(S_tx)$ is
nonincreasing for all $x \in \mathcal{H}$, and (ii)
$\Phi(S_tx)=\Phi(x)$ for
all $t>0$ and $x \in \mathcal{H}$ implies that $x$ is a stationary point of $%
(\mathcal{H},S_t)$. If the dynamical system has a strict Lyapunov
function defined on the entire phase space, then we say that
$(\mathcal{H},S_t)$ is \textit{gradient}.

We can address attractors for gradient systems and characterize
the attracting set. The following result follows from Theorem 2.28
and Corollary 2.29 in \cite{ch-l}.

\begin{theorem}
\label{gradsmooth} Suppose that $(\mathcal{H},S_t)$ is a gradient,
asymptotically smooth dynamical system. Suppose its Lyapunov
function $\Phi (x)$ is bounded from above on any bounded subset of
$\mathcal{H}$ and the set $\Phi _{R}\equiv \{x\in \mathcal{H}:\Phi
(x)\leq R\}$ is bounded for every $R$. If the set of stationary
points for $(\mathcal{H},S_t)$ is
bounded, then $(\mathcal{H},S_t)$ possesses a compact global attractor $%
\mathbf{A}$ which coincides with the unstable manifold, i.e.
\begin{equation*}
\mathbf{A}=\mathscr{M}^{u}(\mathscr{N})\equiv \{x\in
\mathcal{H}:~\exists
~U(t)\in \mathcal{H},~\forall ~t\in \mathbb{R}~\text{ such that }~U(0)=x~%
\text{ and }~\lim_{t\rightarrow -\infty }d_{\mathcal{H}}(U(t)|\mathscr{N}%
)=0\}.
\end{equation*}
\end{theorem}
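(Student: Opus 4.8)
The plan is to reduce the existence part to the already-established characterization in Theorem \ref{0}: since asymptotic smoothness is assumed, it suffices to verify that $(\mathcal H, S_t)$ is \emph{dissipative}, i.e.\ admits a bounded absorbing set; the structure of $\mathbf A$ as an unstable manifold is then obtained separately through a LaSalle-type invariance argument. Thus I would split the proof into (a) producing an absorbing set from the gradient structure, and (b) identifying $\mathbf A$ with $\mathscr M^u(\mathscr N)$.

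For (a), I would first observe that the forward orbit of any bounded set is bounded: if $B$ is bounded then $\Phi$ is bounded above on $B$, say $\Phi\le R_B$ there, and since $t\mapsto \Phi(S_t x)$ is nonincreasing we get $S_t x \in \Phi_{R_B}$ for all $t\ge 0$, with $\Phi_{R_B}$ bounded by hypothesis. Asymptotic smoothness then forces bounded orbits to be precompact, so that for each bounded $B$ the $\omega$-limit set $\omega(B)$ is nonempty, compact, invariant, and uniformly attracts $B$. The gradient structure enters through a LaSalle argument: along a bounded (hence precompact) orbit $\Phi$ is continuous and therefore bounded on the compact orbit closure, so the nonincreasing map $t\mapsto \Phi(S_t x)$ converges to a limit $\ell$; for $y \in \omega(B)$ one then finds $\Phi(S_s y)\equiv \ell$ for all $s\ge 0$, and the strict Lyapunov property (ii) gives $y\in \mathscr N$. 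Hence $\omega(B)\subset \mathscr N$. Finally, since $\mathscr N$ is bounded and $\Phi$ is bounded above on bounded sets, I set $R_\ast = 1+\sup_{x\in\mathscr N}\Phi(x)<\infty$; the sublevel set $\Phi_{R_\ast}$ is bounded, forward invariant (by monotonicity of $\Phi$), and because $\omega(B)\subset \mathscr N\subset\{\Phi< R_\ast\}$ is a compact subset of the open set $\{\Phi<R_\ast\}$ that uniformly attracts $B$, we conclude $S_t B\subset \Phi_{R_\ast}$ for all $t$ past some $t_B$. Thus $\Phi_{R_\ast}$ is a bounded absorbing set, $(\mathcal H,S_t)$ is dissipative, and Theorem \ref{0} yields the compact global attractor $\mathbf A$.

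For (b), I would use that the global attractor is exactly the union of all complete bounded trajectories. The inclusion $\mathscr M^u(\mathscr N)\subset \mathbf A$ is immediate, since any full trajectory backward-asymptotic to $\mathscr N$ is bounded and complete. For the reverse inclusion, take $x\in\mathbf A$; invariance of $\mathbf A$ provides a complete trajectory $U(t)\in \mathbf A$ with $U(0)=x$, which is bounded. Running the LaSalle argument of step (a) in backward time—$t\mapsto\Phi(U(t))$ is monotone and, being bounded on the compact set $\mathbf A$, converges as $t\to-\infty$—the $\alpha$-limit set $\alpha(x)$ is shown to be a nonempty compact invariant subset of $\mathscr N$. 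Consequently $d_{\mathcal H}(U(t),\mathscr N)\to 0$ as $t\to-\infty$, i.e.\ $x\in \mathscr M^u(\mathscr N)$, giving $\mathbf A=\mathscr M^u(\mathscr N)$.

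I expect the main obstacle to be the LaSalle invariance step and, in particular, passing from the \emph{pointwise} statement ``every trajectory converges to $\mathscr N$'' to the \emph{uniform} absorption needed for dissipativity. The delicate points are: justifying that $\Phi$ is bounded below along orbits (which I obtain from precompactness of bounded orbits supplied by asymptotic smoothness, together with continuity of $\Phi$), and leveraging compactness of $\omega(B)$ inside the open sublevel set $\{\Phi<R_\ast\}$ to upgrade mere attraction into entry into a fixed bounded absorbing set in finite, set-uniform time. The characterization in (b) requires the analogous control of $\alpha$-limit sets, which is where the completeness and boundedness of trajectories lying inside $\mathbf A$ are essential.
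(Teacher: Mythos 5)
Your step (a) contains a genuine gap: the claim $\omega(B)\subset\mathscr{N}$ for every bounded set $B$. The LaSalle argument you sketch---``the nonincreasing map $t\mapsto\Phi(S_tx)$ converges to a limit $\ell$; for $y\in\omega(B)$ one then finds $\Phi(S_sy)\equiv\ell$''---is valid only for the $\omega$-limit set of a \emph{single} trajectory. Points of $\omega(B)$ arise as limits $S_{t_n}x_n$ with \emph{varying} $x_n\in B$, so there is no single limit value $\ell$ to speak of, and in fact the inclusion is false in general: once the attractor exists, $\omega(B)=\mathbf{A}=\mathscr{M}^{u}(\mathscr{N})$ for any bounded $B\supset\mathbf{A}$, and this strictly contains $\mathscr{N}$ whenever there are nontrivial connecting orbits (already for $\dot u=u-u^3$ on $\mathbb{R}$ one has $\mathscr{N}=\{0,\pm1\}$ while $\omega([-2,2])=[-1,1]$). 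Your absorption step is nevertheless salvageable, because all it actually needs is $\omega(B)\subset\{\Phi<R_\ast\}$, not $\omega(B)\subset\mathscr{N}$: since $\omega(B)$ is compact and invariant, through each $y\in\omega(B)$ there passes a complete bounded trajectory $U$ lying in $\omega(B)$; the monotone quantity $\Phi(U(t))$ converges as $t\to-\infty$, the $\alpha$-limit set of $U$ lies in $\mathscr{N}$ (this is exactly the backward LaSalle argument you already carry out in your part (b)), and monotonicity of $\Phi$ along $U$ then gives $\Phi(y)=\Phi(U(0))\le\sup_{\mathscr{N}}\Phi<R_\ast$. With that substitution your compact-inside-open-sublevel-set argument goes through. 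Note also that openness of $\{\Phi<R_\ast\}$, and boundedness of $\Phi$ on compact orbit closures, both require $\Phi$ to be continuous; the paper's definition of a strict Lyapunov function does not state continuity explicitly (it is part of the definition in \cite{ch-l}), so you should flag it as a hypothesis you are using.

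For comparison: the paper does not prove this theorem at all---it is quoted from \cite{ch-l} (Theorem 2.28 and Corollary 2.29). The argument there avoids your problematic step by never working with set-valued $\omega$-limits at the dissipativity stage: the pointwise LaSalle argument gives $\omega(x)\subset\mathscr{N}$ for each single $x$, hence \emph{point} dissipativity (since $\mathscr{N}$ is bounded), boundedness of forward orbits of bounded sets follows from the sublevel sets exactly as in your opening observation, and a Hale-type criterion (asymptotic smoothness plus point dissipativity plus eventually bounded orbits) then yields the compact global attractor, with the identification $\mathbf{A}=\mathscr{M}^{u}(\mathscr{N})$ obtained afterwards essentially as in your part (b), which is sound. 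Your attempt to manufacture an explicit uniform absorbing set $\Phi_{R_\ast}$ is a legitimate alternative route---and after the repair above it works---but the passage from pointwise convergence to uniform absorption is precisely where your write-up breaks, and it is the step the cited proof deliberately sidesteps.
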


Secondly, we state a useful criterion (first appearing \cite{kh}
and stated in the present version in \cite{springer}) which
reduces asymptotic smoothness to finding a suitable functional on
the state space with a compensated compactness condition:

\begin{theorem}(\cite{ch-l}-Proposition 2.10)
\label{psi} Let $(\mathcal{H},S_t)$ be a dynamical system,
$\mathcal{H}$ Banach with norm $||\cdot||$. Assume that for any
bounded positively invariant set $B \subset \mathcal{H}$ and for
all $\epsilon>0$ there exists a $T\equiv T_{\epsilon,B}$ such that
\begin{equation*}
||S_Tx_1 - S_Tx_2||_{\mathcal{H}} \le
\epsilon+\Psi_{\epsilon,B,T}(x_1,x_2),~~x_i \in B
\end{equation*}
with $\Psi$ a functional defined on $B \times B$ depending on
$\epsilon, T,$ and $B$ such that
\begin{equation*}
\liminf_m \liminf_n \Psi_{\epsilon,T,B}(x_m,x_n) = 0
\end{equation*}
for every sequence $\{x_n\}\subset B$. Then $(\mathcal{H},S_t)$ is
an asymptotically smooth dynamical system.
\end{theorem}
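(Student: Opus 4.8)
The plan is to prove asymptotic smoothness through the Kuratowski measure of noncompactness $\alpha(\cdot)$, using the standard equivalence: $(\mathcal H, S_t)$ is asymptotically smooth in the sense defined above if and only if $\alpha(S_t D)\to 0$ as $t\to+\infty$ for every bounded, positively invariant set $D$. Granting this reduction, the uniformly attracting compact set required by the definition is produced as the $\omega$-limit-type set $K=\bigcap_{s\ge 0}\overline{\bigcup_{t\ge s} S_t D}\subset\overline D$, whose compactness (from $\alpha(S_t D)\to 0$) and uniform attraction (from the nested-intersection structure) are routine once the $\alpha$-decay is in hand. Thus the entire burden falls on extracting the decay of $\alpha(S_t D)$ from the contractive inequality and the compensated-compactness condition on $\Psi$.

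The core step I would carry out is the following single-time bound. Fix a bounded positively invariant $D$ and $\epsilon>0$, and let $T=T_{\epsilon,D}$ be the time supplied by the hypothesis, so that $\|S_T x_1-S_T x_2\|\le \epsilon+\Psi_{\epsilon,D,T}(x_1,x_2)$ for all $x_1,x_2\in D$. Given any sequence $\{x_n\}\subset D$, I would use the compensated-compactness condition $\liminf_m\liminf_n\Psi_{\epsilon,D,T}(x_n,x_m)=0$, applied recursively to subsequences, to run a diagonal extraction producing a subsequence $\{x_{p_k}\}$ along which $\Psi_{\epsilon,D,T}(x_{p_k},x_{p_l})\to 0$ as $k,l\to\infty$. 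Feeding this into the contractive inequality gives $\limsup_{k,l\to\infty}\|S_T x_{p_k}-S_T x_{p_l}\|\le\epsilon$, i.e. every sequence in $S_T D$ has an asymptotically $\epsilon$-Cauchy subsequence. By the sequential characterization of the Kuratowski measure this yields $\alpha(S_T D)\le C\epsilon$.

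I would then upgrade this to a limit by monotonicity in time. Because $D$ is positively invariant, for $t_2\ge t_1$ one has $S_{t_2}D=S_{t_1}(S_{t_2-t_1}D)\subseteq S_{t_1}D$, so $t\mapsto\alpha(S_t D)$ is nonincreasing. Hence $\alpha(S_t D)\le C\epsilon$ for all $t\ge T_{\epsilon,D}$, and since $\epsilon>0$ is arbitrary, $\alpha(S_t D)\to 0$ as $t\to\infty$. Combined with the reduction of the first paragraph, this establishes asymptotic smoothness, which is exactly the hypothesis fed into Theorem \ref{0} to produce the attractor.

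The main obstacle is the passage from the two-point estimate to the measure-of-noncompactness bound. First, the iterated $\liminf$ in the hypothesis must be organized carefully: one extracts the inner-in-$n$ limit before the outer-in-$m$ limit and diagonalizes, so that the single pairwise estimate $\|S_T x_1-S_T x_2\|\le\epsilon+\Psi$ is used only along the jointly selected subsequence. Second, the translation of an "asymptotically $\epsilon$-Cauchy" sequential property into a bound on $\alpha$ carries the usual factor-of-two and strict-versus-nonstrict bookkeeping between covers by $\epsilon$-balls and sets of diameter $\le\epsilon$; this is why the constant $C$ (rather than exactly $1$) appears, and it is harmless since $\epsilon$ is arbitrary. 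Everything else—the $\omega$-limit construction, compactness, and attraction—is standard once $\alpha(S_t D)\to 0$ is secured.
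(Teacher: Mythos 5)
The paper does not actually prove this statement: it is quoted from \cite{ch-l} (Proposition 2.10) and used as a black box in the asymptotic smoothness argument of Section 4, so there is no in-paper proof to compare against. Judged on its own, your argument is correct and follows the standard route for criteria of this type. The reduction to decay of the Kuratowski measure, $\alpha(S_tD)\to 0$ for bounded positively invariant $D$, is a classical equivalence (valid here since $\mathcal{H}$ is complete), and your monotonicity observation $S_{t_2}D=S_{t_1}(S_{t_2-t_1}D)\subseteq S_{t_1}D$ correctly upgrades the single-time bound $\alpha(S_TD)\le C\epsilon$ to full decay. One bookkeeping point in the core extraction deserves to be made explicit: since $\Psi$ is not assumed symmetric and the hypothesis is an iterated $\liminf$ with the outer index in the first argument, the recursive application to successive subsequences yields control only for \emph{ordered} pairs --- a subsequence $\{z_k\}$ and $\delta_k\to 0$ with $\Psi_{\epsilon,B,T}(z_k,z_l)\le\delta_k$ for $l>k$ --- rather than the joint two-sided convergence $\Psi(z_k,z_l)\to 0$ as $k,l\to\infty$ that you assert. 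This is harmless precisely because the norm is symmetric: applying the hypothesis to the ordered pair $(z_k,z_l)$ with $k<l$ already gives $\limsup_{k\ne l}\|S_Tz_k-S_Tz_l\|\le\epsilon$, which is all the separation-measure comparison $\beta\le\alpha\le 2\beta$ needs to conclude $\alpha(S_TD)\le 2\epsilon$; this is exactly your factor-of-two remark. For the record, the cited source obtains the same conclusion by the equivalent sequential formulation (diagonalizing over $\epsilon_j\to 0$ to extract a Cauchy subsequence of $\{S_{t_n}x_n\}$, i.e., asymptotic compactness on bounded invariant sets, then taking the $\omega$-limit set as the attracting compact set); your $\alpha$-measure formulation delivers the same result with somewhat less sequence bookkeeping, at the cost of invoking the measure-of-noncompactness characterization of asymptotic smoothness.
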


\end{document}